\documentclass[a4paper,11pt]{amsart}
\usepackage{amsfonts}
\usepackage{amsmath}
\usepackage{amssymb}
\usepackage[a4paper]{geometry}
\usepackage{mathrsfs}
\usepackage{xcolor}
\usepackage{hyperref}
\renewcommand\eqref[1]{(\ref{#1})} 
\usepackage[utf8]{inputenc}
\usepackage[T1]{fontenc}
\newtheorem{remark}{Remark}
\numberwithin{equation}{section}
\theoremstyle{plain}
\newtheorem{theorem}{Theorem}[section]
\newtheorem{lemma}[theorem]{Lemma}
\newtheorem{corollary}{Corollary}

\newtheorem{lem}[theorem]{Lemma}
\newtheorem{proposition}{Proposition}
\theoremstyle{definition}
\newtheorem{definition}[theorem]{Definition}

\begin{document}
\title[local-nonlocal heat equation with exponential nonlinearity]
{Heat equations driven by mixed local-nonlocal operators with exponential nonlinearity  }

\author[D. K. Chaurasia]{Dharmendra Kumar Chaurasia}
\address{
  Dharmendra Kumar Chaurasia:
  \endgraf Department of Mathematics,\endgraf 
		Banaras Hindu University
		Varanasi, \endgraf Uttar Pradesh, 221005, India.
  \endgraf {\it E-mail address:} {\rm  mkc0560@bhu.ac.in}
  }

\author[A. Z. Fino]{Ahmad Z. Fino}
\address{
 Ahmad Z. Fino:
  \endgraf College of Engineering and Technology,\endgraf 
		American University of the Middle East,\endgraf 
		 Egaila 54200, Kuwait.
  \endgraf {\it E-mail address:} {\rm  ahmad.fino@aum.edu.kw}
  }

\author[V. Kumar]{Vishvesh Kumar}
\address{
  Vishvesh Kumar:
  \endgraf Department of Mathematical Sciences,\endgraf 
		Indian Institute of Technology (BHU),
		Varanasi, \endgraf Uttar Pradesh, 221005, India.
  \endgraf {\it E-mail address:} {\rm  vishveshmishra@gmail.com; vishvesh.mat@iitbhu.ac.in}
  }

\keywords{mixed local-nonlocal operator; heat equation; local well-posedness; global existence; exponential nonlinearity}

\thanks{Corresponding author: Vishvesh Kumar, email: vishvesh.mat@iitbhu.ac.in}

\subjclass{35K58, 35B33, 35A01, 35B44}

\begin{abstract} 
We investigate the Cauchy problem for a heat equation driven by the mixed local-nonlocal operator $\mathcal{L}:=-\Delta+(-\Delta)^s$, $s\in(0,1)$, with exponential nonlinearity
\[
\partial_tu(x,t)+\mathcal{L}u(x,t)=f(u(x,t)),
\qquad (x,t)\in \mathbb{R}^{d}\times(0,\infty),
\]
where $f:\mathbb{R}\to\mathbb{R}$ exhibits exponential growth at infinity and satisfies $f(0)=0$. 
We establish local well-posedness in a suitable Orlicz space in the case where $f(u)\sim e^{|u|^p}$ as $|u|\to\infty$, with $p>1$. 
We further prove the existence of global solutions for small initial data under the assumption that $f$ satisfies the growth condition $|f(u)|\sim |u|^m$ near the origin.  Moreover, we derive large-time decay estimates in Lebesgue spaces, showing that the behavior of the nonlinearity near the origin determines the decay rate of solutions and highlights a unique asymptotic transition that bridges local and non-local diffusion theories.
\end{abstract}
\maketitle
\allowdisplaybreaks
\section{Introduction, main results and discussion}
The main aim of this paper is to study the following local-nonlocal semilinear heat equation: 
\begin{equation}\label{eq1}
\left\{\begin{array}{ll}
\,\, \displaystyle {\partial_tu(x, t)+\mathcal L u(x, t) =f(u(x, t)),} &\displaystyle {t>0,x\in {\mathbb{R}^d},}\\
{}\\
\displaystyle{u(0,x)=  u_0(x),\qquad\qquad}&\displaystyle{x\in {\mathbb{R}^d},}
\end{array}
\right.
\end{equation} 
where $d\geq 1$, $f:\mathbb{R}\rightarrow \mathbb{R}$ is a nonlinear function with exponential growth at infinity ($f(u)\sim e^{|u|^p}$, $p>1$, for large $u$) and satisfying $f(0)=0.$ The operator
$$\mathcal L = -\Delta+(-\Delta)^{s},$$ is the mixed local-nonlocal operator with the operator $(-\Delta)^{s}$ being the (nonlocal) fractional Laplacian of order $s\in (0,1)$ defined by
\begin{align*}
(-\Delta)^{s} v(x) & :=C_{d,s}  \mathrm{P.V.}\int_{\mathbb{R}^d}\frac{v(x)-v(y)}{|x-y|^{d+2s}}\,dy,
\end{align*}
where P.V. stands for the Cauchy principal value, $C_{d,s}:= \frac{s4^{s}\Gamma\left(\frac{d+2s}{2}\right)}{\pi^{d/2}\Gamma(1-s)}$ is a normalization constant, and $\Gamma$ denotes the Gamma function.

It is worth emphasizing that the mixed local-nonlocal operator \(\mathcal{L}\) consists of the superposition of the classical Laplacian \(\Delta\), a local second-order differential operator, and the fractional Laplacian \((-\Delta)^s\), which is inherently nonlocal. While the classical Laplacian models standard diffusion mechanisms such as Brownian motion, the fractional Laplacian describes anomalous diffusion processes driven by long-range interactions and jump-type dynamics, as observed in Lévy flight processes \cite{Dip1,Val}. As a result, the operator \(\mathcal{L}\) embodies the interplay between local and nonlocal diffusion effects and provides a suitable framework for the mathematical modeling of phenomena in which short-range and long-range interactions occur simultaneously; we cite \cite{Dip1, Dip2, Dip3, Dip4} and references therein for more detail. Consequently, problem \eqref{eq1}
 serves as a framework for reaction-diffusion models with mixed diffusion.
 
Before delving into the subject of this paper, let us recall some known results in the classical polynomial case.
\subsection{The polynomial case}
The Cauchy problem \eqref{eq1} has been extensively studied  for $ \mathcal{L}=-\Delta$ in the framework of Lebesgue spaces, particularly when $f$ has polynomial growth, that is, $f(u)=|u|^{p-1}u, \,\,p>1.$ In this case, problem \eqref{eq1} becomes the classical semi-linear heat equation:
\begin{equation}\label{eq1classheat}
\left\{\begin{array}{ll}
\,\, \displaystyle {\partial_tu(x, t)-\Delta  u(x, t) =f(u(x, t)),} &\displaystyle {t>0,\,\,x\in {\mathbb{R}^d},}\\
{}\\
\displaystyle{u(0,x)=  u_0(x),\qquad\qquad}&\displaystyle{x\in {\mathbb{R}^d}.}
\end{array}
\right.
\end{equation} 
It is well established that if the initial datum \(u_0 \in L^\infty(\mathbb{R}^d)\) and the nonlinear term \(f \in C^1(\mathbb{R})\) satisfies \(f(0)=0\), then there exists a maximal time \(T=T(u_0)>0\) for which problem \eqref{eq1classheat} admits a unique local-in-time solution in $L^\infty\!\big(0,T; L^\infty(\mathbb{R}^d)\big)$. On the other hand, the first local well-posedness results for problem \eqref{eq1classheat} with the nonlinearity
$ f(u)=|u|^{p-1}u$, $p>1,$ and initial data $u_0$ which do not belong to $L^\infty(\mathbb{R}^d)$, were established by Weissler \cite{weissler1, weissler2}. He showed that there always exists a Lebesgue space \(L^{q}(\mathbb{R}^{d})\), with \(1 \leq q<\infty\), in which problem \eqref{eq1classheat} is locally well-posed. The scaling invariance property is a key feature in the analysis of power-type nonlinearities. A simple calculation shows that if $u(t, x)$ is a solution to problem  \eqref{eq1classheat}, then so is $u_\lambda(x, t)=\lambda^{2/(p-1)} u( \lambda x, \lambda^2 t), \, \lambda>0$. Moreover, the $L^{q}$-norm is invariant under this scaling if and only if $
q=q_{c}:=d(p-1)/2$, equivalently $p=p_h^*:= 1+2q/d$. The exponent  $p_h^*$ plays a crucial role in the existence theory for the Cauchy problem \eqref{eq1classheat}. In addition, Weissler showed that the Lebesgue space $L^{q_c}(\mathbb{R}^d)$ serves as the role of ``critical space'' for the well-posedness of problem \eqref{eq1classheat}.
With respect to this critical exponent, the existence and uniqueness theory for equation \eqref{eq1classheat} with nonlinearity
$ f(u)=|u|^{p-1}u$, $p>1,$ can be divided into the following two cases.

\medskip
\noindent \textbf{Case 1.} 
If $q \ge q_c$ with $q>1$, or $q>q_c$ with $q \ge 1$, then Weissler \cite{weissler2} and Brezis-Cazenave \cite{Brezis} proved that for any initial data $u_0 \in L^q(\mathbb{R}^d)$, there exists a positive time $T=T(u_0)$ and a unique solution
\[
u \in C([0,T],L^q(\mathbb{R}^d)) \cap L_{\mathrm{loc}}^{\infty}(0,T;L^{\infty}(\mathbb{R}^d))
\]
to problem (1.2).

\medskip
\noindent \textbf{Case 2.} 
If $q < q_c$, Weissler \cite{weissler2} and Brezis-Cazenave \cite{Brezis} showed that no local solution exists in any suitable weak sense. Furthermore, Haraux-Weissler \cite{HW} proved that uniqueness fails for the trivial initial data $u_0=0$ when $1+2/d < p < (d+2)/(d-2).$

 Now, we turn our attention towards the global existence of the solution of \eqref{eq1classheat}.   In his seminal work \cite{Fujita}, Fujita investigated problem \eqref{eq1} in the particular case of $f(u)=u^p$ and $\mathcal{L}=-\Delta$
to characterize the conditions under which global solutions exist and those under which they do not. He showed that for any nonnegative initial data \(u_0\), there are no global positive solutions if $1<p<p_F:=1+2/d.$ In contrast, when \(p>p_F\), the problem admits a global positive solution provided that the initial datum \(u_0\ge 0\) is sufficiently small. Later, Hayakawa \cite{Hayakawa} and Sugitani \cite{Sugitani} proved that the critical case \(p=p_F\) also leads to blow-up for all nonnegative initial data. The quantity $p_F$ is therefore referred to as the \emph{Fujita (critical) exponent}. In the critical case $q=q_{c}$ and $d \geq 3$, Weissler \cite{weissler3} proved a global existence result for problem \eqref{eq1classheat} under a smallness assumption on the initial data from $L^q(\mathbb{R}^d)$ with $1<q<\infty$.

More recently, several authors have considered problem \eqref{eq1} with \(f(u)=u^p\) and the fractional Laplacian
$
\mathcal{L}=(-\Delta)^s, \,\,\, s\in(0,1).
$
We refer to \cite{Fino, Guedda, Sugitani} and the references therein for a detailed account of these works. It has been observed that, in the sense of Fujita, the corresponding critical exponent in this fractional setting is given by
$p_c=1+2s/d.$

Finally, we turn our attention to the local and global well-posedness of the Cauchy problem \eqref{eq1} for polynomial type nonlinearities.   In this case, the global existence and blow-up of solutions have been investigated in recent works by Biagi, Punzo, and Vecchi \cite{Biagi}, as well as by Kumar and Torebek \cite{KT2026}. Similar results were obtained by Del Pezzo and Ferreira in \cite{Pezzo} for the operator $\mathcal L_{a,b} = -a\Delta + b(-\Delta)^s$, where $a,b \in \mathbb{R}_+$. 
In these studies, it was observed that the critical Fujita exponent is $p_c= 1 + 2s/d,$
which marks the threshold between global existence and finite-time blow-up of positive solutions to \eqref{eq1} for $u_0 \ge 0$. Interestingly, this critical exponent coincides with the Fujita exponent for the heat equation \eqref{eq1} associated with the pure fractional operator $\mathcal L = (-\Delta)^s$ with power-type nonlinearity. This shows that the presence of a local diffusion term does not modify the fundamental global existence behavior dictated by the fractional component. In particular, the mixed local-nonlocal operator retains the same criticality as the fractional Laplacian in determining the long-time dynamics of solutions.   An interesting phenomenon observed in \cite{KFA25} concerns the heat kernel estimates for mixed local–nonlocal operators (see Lemma \ref{themlplq}). Specifically, the local component dominates the behavior in the small-time regime, whereas the nonlocal component dominates at large times. This interplay yields sharper and novel results that do not arise when considering purely local or purely nonlocal operators separately.  We also cite \cite{BM25,Castillo3,KFA25,KT2025} and references therein for further developments in this line of research.

\subsection{The exponential case, main results and discussion} A natural question that arises in this context is the following: {\it when the nonlinearity exhibits exponential growth, can one identify functional spaces that contain unbounded initial data and still allow for the development of a well-posed existence theory?} This question has attracted considerable attention and has been investigated by several authors.

Cazenave and Weissler \cite{CW} pointed out a strong connection between the $L^q$-theory for the nonlinear heat equation and the $H^s$-theory for the nonlinear Schr\"odinger equation

\begin{equation}
    \begin{cases} \label{schro}
i\partial_t u + \Delta u = |u|^{p-1}u, & t>0,\; x \in \mathbb{R}^d, \\
u(0,x) = u_0(x), & x \in \mathbb{R}^d .
\end{cases}
\end{equation}
They showed that for $0 \le s < d/2$ there exists a critical exponent $p_s^{*}=1+4/(d-2s),$ which is determined by the scaling invariance of \eqref{schro} in the homogeneous Sobolev space $\dot H^s(\mathbb{R}^d)$. In particular, whenever $1<p\le p_s^{*}$ and $u_0\in H^s(\mathbb{R}^d)$, there exists a time $T=T(u_0)>0$ such that the Cauchy problem \eqref{schro} possesses a unique solution $u\in C([0,T);H^s(\mathbb{R}^d)).$ Furthermore, in the critical case $p=p_s^{*}$, global well-posedness can be obtained for sufficiently small initial data in $H^s(\mathbb{R}^d)$ by applying a fixed point argument. When $s=d/2$, the corresponding critical exponent becomes infinite, that is $p_s^{*}=\infty$, which indicates that the critical nonlinearity grows faster than any power-type nonlinearity. In this regime, Nakamura and Ozawa~\cite{NO98} established global existence for small initial data in $H^{d/2}(\mathbb{R}^d)$ for the nonlinear Schr\"odinger equation with an exponential-type nonlinearity satisfying $f(u)\sim e^{|u|^2}$.

Cazenave and Weissler \cite{CW} also observed that the critical exponent $p_h^*= 1+2q/d$, associated with the heat equation \eqref{eq1classheat} with initial data $u_0\in L^q(\mathbb{R}^d)$, and the critical exponent $p_s^*$ corresponding to the Schr\"odinger equation \eqref{schro}, are linked via the Sobolev embedding $\dot H^s(\mathbb{R}^d)\hookrightarrow L^q(\mathbb{R}^d),$
where $0\le s<d/2$ and the parameters satisfy
\[
\frac{1}{q}=\frac{1}{2}-\frac{s}{d}.
\]
A direct computation then shows that this embedding yields
\[
p_h^* = 1+\frac{2q}{d} = 1+\frac{4}{d-2s} = p_s^*,
\]
so that the functional spaces associated with the critical exponents of both equations coincide through this Sobolev embedding. Inspired by this relationship, Ruf and Terraneo~\cite{RT} suggested an analogue of this correspondence for problem \eqref{eq1} with $\mathcal{L}=-\Delta$ and $u_0\in L^q(\mathbb{R}^d)$, in the presence of an exponential nonlinearity satisfying
$f(u)\sim e^{|u|^2},\, |u|\to\infty$. They indicated that global solutions exist for sufficiently small initial data in a function space that contains $H^{d/2}(\mathbb{R}^d)$, by virtue of the Sobolev embedding. However, it is known that no Lebesgue space containing $H^{d/2}(\mathbb{R}^d)$ provides an appropriate framework for studying \eqref{eq1} with $\mathcal{L}=-\Delta$ and such exponential nonlinearities. Instead, the suitable functional setting for their analysis of the heat equation with exponential type nonlinearity was given by the Orlicz space $\exp L^2(\mathbb{R}^d)$. It is an appropriate place to introduce the Orlicz space $\exp L^p(\mathbb{R}^d), 1\le p<\infty$, as follows 
$$\exp L^p(\mathbb{R}^d)=\left\{u\in L^1_{\mathrm{loc}}(\mathbb{R}^d):\,\int_{\mathbb{R}^d}\left(\exp\left(\frac{|u(x)|^p}{\lambda^p}\right)-1\right)\,dx<\infty,\, \textnormal{for some}\,\, \lambda>0\right\},$$
which is a Banach space when endowed with the Luxemburg norm\label{Luxnorm}
$$\|u\|_{\exp L^p(\mathbb{R}^d)}=\inf\left\{\lambda>0:\, \int_{\mathbb{R}^d}\left(\exp\left(\frac{|u(x)|^p}{\lambda^p}\right)-1\right)\,dx\leq 1\right\}.$$
It follows from the Trudinger-Moser inequality that the Orlicz space 
$\exp L^{2}(\mathbb{R}^{d})$ enjoys the continuous embeddings
\[
H^{\frac{d}{2}}(\mathbb{R}^{d}) \hookrightarrow \exp L^{2}(\mathbb{R}^{d}) 
\hookrightarrow L^{r}(\mathbb{R}^{d}) \quad \text{for every}\,\, 2 \le r < \infty .
\]
Furthermore, in the scale of Orlicz spaces, $\exp L^{2}(\mathbb{R}^{d})$ can be regarded as 
the smallest space that contains $H^{d/2}(\mathbb{R}^{d})$. Ruf and Terraneo \cite{RT} investigated the heat equation \eqref{eq1} with a nonlinear term $f$  
exhibiting exponential growth $e^{|u|^2}$ at infinity. They obtained the local-in-time existence of 
solutions for sufficiently small initial data in the Orlicz space 
$\exp L^{2}(\mathbb{R}^{d})$. Later, Ioku \cite{Ioku2011} proved the global-in-time existence 
of solutions to the Cauchy problem \eqref{eq1} for small initial data belonging to 
$\exp L^{2}(\mathbb{R}^{d})$. It has further been established that when the nonlinearity satisfies 
$f(u) \sim e^{|u|^{s}}$ as $|u| \to \infty$ with $s>2$, the existence of 
solutions can no longer be guaranteed. In fact, in this regime one obtains 
a nonexistence result in the Orlicz space $\exp L^{2}(\mathbb{R}^{d})$; 
see, for instance, \cite{Ioku}. The two-dimensional case of the heat equation \eqref{eq1} was investigated by 
Ibrahim, Jrad, Majdoub, and Saanouni \cite{Ibrahim1}. This setting is critical 
from the viewpoint of the energy method as well as with respect to the 
critical Trudinger embedding. They established a local existence and uniqueness 
result for the corresponding problem. In addition, they obtained global 
existence in the defocusing case and derived a blow-up result in the focusing case. Majdoub and Tayachi \cite{Majdoub2,Majdoub3} extended the results of 
\cite{Ioku,Ioku2011} to nonlinearities of the type 
$f(u) \sim e^{|u|^{q}}$, as $|u|\to\infty$, with $q>1$. 
They studied the problem in the Orlicz framework 
$\exp L^{p}(\mathbb{R}^{d})$, where $p \ge q > 1$, 
establishing local well-posedness, global existence for small initial data, 
and large-time decay estimates of solutions. The second author and Kirane \cite{FK20} considered \eqref{eq1} for $\mathcal{L}=(-\Delta)^s$ extending the results of \cite{Majdoub2,Majdoub3} into the nonlocal setting. 

The main aim of this paper is to study problem \eqref{eq1} with a nonlinearity $f$ exhibiting exponential growth at infinity. As discussed earlier, to handle nonlinearities with exponential growth, one needs to work in the framework of Orlicz spaces, in analogy with the case of power-type nonlinearities, where Lebesgue spaces are the natural setting.  Let $e^{-t \mathcal{L}}$ denote the linear semigroup generated by the mixed local-nonlocal operator $\mathcal{L}$ defined in \eqref{heatsemimix}. Motivated by \cite{Ioku, Majdoub1,Majdoub2}, we study the local-wellposedness of the equation \eqref{eq1} in the space $\exp L^p_0(\mathbb{R}^d),$ which is defined by 
\begin{align*}\exp L^p_0(\mathbb{R}^d):=\Big\{u\in \exp L^p(\mathbb{R}^d): \textnormal{there exists $\{u_n\}_{n=1}^\infty\subset C^\infty_0(\mathbb{R}^d)$}\qquad\\
\quad\textnormal{such that $\lim_{n\rightarrow\infty}\|u_n-u\|_{\exp L^p(\mathbb{R}^d)}=0$}\Big\}.
\end{align*}
This space turns out to be an appropriate setting for the study of \eqref{eq1}, 
since $C_{0}^{\infty}(\mathbb{R}^{d})$ is not dense in 
$\exp L^{p}(\mathbb{R}^{d})$. This lack of density influences the mapping 
properties of the heat semigroup $e^{-t\mathcal{L}}$. In particular, 
$e^{-t\mathcal{L}}$ is bounded on $\exp L^{p}_{0}(\mathbb{R}^{d})$ 
(see Proposition~\ref{prop2}), whereas such a continuity property is not 
generally expected to hold on the whole space $\exp L^{p}(\mathbb{R}^{d})$ 
(cf.~\cite{Ioku} for the case $\mathcal{L}=-\Delta$). It is also known \cite{Ioku} that
$$\exp L^p_0(\mathbb{R}^d)=\left\{u\in L^1_{\rm{loc}}(\mathbb{R}^d):\,\int_{\mathbb{R}^d}\left(\exp\left(\alpha|u(x)|^p\right)-1\right)\,dx<\infty,\, \textnormal{for every}\,\, \alpha>0\right\}.$$

In order to state our main results,  we introduce two notions of mild solutions: 
The standard mild solution formulated in the space 
$\exp L^p_0(\mathbb{R}^d)$, 
and the weak-mild solution formulated in the larger space 
$\exp L^p(\mathbb{R}^d)$.

\begin{definition}\label{mild}
Let $u_0 \in \exp L^p_0(\mathbb{R}^d)$ and $T>0$. A function 
$u \in C([0,T], \exp L^p_0(\mathbb{R}^d))$ is called a {\it mild solution} of the Cauchy problem \emph{(\ref{eq1})} if it satisfies
\begin{equation}\label{eq2}
u(t)=e^{-t\mathcal{L}}u_0+\int_0^t e^{-(t-s)\mathcal{L}} f(u(s))\,ds,\qquad t\in[0,T].
\end{equation}
\end{definition}

\begin{definition}
Given $u_0 \in \exp L^p(\mathbb{R}^d)$ and $T>0$. A function $u \in L^\infty\big((0,T),\exp L^p(\mathbb{R}^d)\big)
$ is called a {\it weak-mild solution} of the Cauchy problem \emph{(\ref{eq1})} if satisfies the corresponding integral equation \eqref{eq2} in $\exp L^p(\mathbb{R}^d)$ for almost every $t\in(0,T)$, and
$u(t) \rightarrow u_0$ in the weak$^{*}$ topology  as $t \to 0$.
\end{definition}
Recall that $u(t)\to u_0$ in the weak$^{*}$ sense if and only if
\[
\lim_{t\to 0}\int_{\mathbb{R}^d}\big(u(t,x)-u_0(x)\big)\varphi(x)\,dx = 0,
\quad \text{for all } \varphi \in L^1(\ln L)^{1/p}(\mathbb{R}^d).
\]
Here, the space $L^1(\ln L)^{1/p}(\mathbb{R}^d)$ is defined by
\[
L^1(\ln L)^{1/p}(\mathbb{R}^d)
:= \left\{ f\in L^1_{\mathrm{loc}}(\mathbb{R}^d) \; : \;
\int_{\mathbb{R}^d} |f(x)|\,\ln^{1/p}\!\big(2+|f(x)|\big)\,dx < \infty \right\},
\]
which serves as a predual of $\exp L^p(\mathbb{R}^d)$; we refer to excellent books \cite{Bennett,Rao} for details and further references. 

We are now prepared to present the first main result of this paper, which addresses the local well-posedness of the Cauchy problem \eqref{eq1}. 
\begin{theorem}\label{theo1}\textup{(Local well-posedness)}. Let $d\ge 1$, $p>1$, and $s \in (0, 1)$. Suppose that the nonlinearity $f$ satisfies
 \begin{equation}\label{eq3}
  f(0)=0,\qquad |f(u)-f(v)|\leq C|u-v|(e^{\lambda |u|^p}+e^{\lambda |v|^p}),\quad \hbox{for all}\,\, u,v\in\mathbb{R}, 
  \end{equation} 
 for some constants, $C>0$ and $\lambda>0$. Then, for any initial data
$u_0 \in \exp L^p_0(\mathbb{R}^d)$, there exists a time $T=T(u_0)>0$ such that the Cauchy problem  \emph{(\ref{eq1})} admits a unique mild solution
$
u \in C\big([0,T],\exp L^p_0(\mathbb{R}^d)\big).
$
\end{theorem}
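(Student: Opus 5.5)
The plan is to follow the strategy of Majdoub--Tayachi and Ioku, adapted to the semigroup $e^{-t\mathcal L}$. The key idea is to split the initial datum as $u_0 = v_0 + w_0$, where $v_0\in C_0^\infty(\mathbb{R}^d)$ and $\|w_0\|_{\exp L^p}\le\varepsilon$ with $\varepsilon$ small. This splitting is possible precisely because $u_0\in\exp L^p_0(\mathbb{R}^d)$.

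\textbf{Step 1: the regular part.} First solve the problem with initial datum $v_0$, which lies in $L^\infty\cap L^r$ for every $r$. Because $f$ is locally Lipschitz, a standard contraction in $C([0,T_1], L^\infty\cap L^r(\mathbb{R}^d))$ with $r\ge p$ gives a local solution $v$. This uses only that $e^{-t\mathcal L}$ is a contraction on every $L^q$, since its kernel is the convolution of the Gaussian and the fractional heat kernel, both probability densities (Lemma~\ref{themlplq}). The solution satisfies $\|v(t)\|_\infty\le 2\|v_0\|_\infty$. Since $L^r\cap L^\infty\hookrightarrow \exp L^p_0$ for $r\le p$ (for instance, take $r=p$ and expand the exponential), $v\in C([0,T_1],\exp L^p_0)$.

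\textbf{Step 2: the perturbation.} Next look for $u=v+w$, where $w$ solves
\[
w(t)=e^{-t\mathcal L}w_0+\int_0^t e^{-(t-\tau)\mathcal L}\big(f(v+w)-f(v)\big)(\tau)\,d\tau .
\]
Work in the ball $Y_T=\{w\in C([0,T],\exp L^p_0):\ \sup_t\|w(t)\|_{\exp L^p}\le 2\varepsilon\}$ with the $L^\infty_t(L^r)$ metric, with $r\ge p$. Proposition~\ref{prop2} gives the boundedness of $e^{-t\mathcal L}$ on $\exp L^p_0$ and its continuity in $t$, so the linear term stays in $Y_T$ up to a constant. For the Duhamel term, apply \eqref{eq3} together with $\|v\|_\infty\le M$. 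This yields
\[
|f(v+w)-f(v)|\le C|w|\,e^{\lambda 2^{p-1}M^p}\,e^{\lambda 2^{p-1}|w|^p}.
\]
Estimate it in $L^q$ for some finite $q$ using Hölder's inequality and the Orlicz fact that $\|e^{\beta|w|^p}-1\|_{L^{a}}$ is small when $\|w\|_{\exp L^p}\le 2\varepsilon$ and $a\beta(2\varepsilon)^p\le 1$. Then pass from $L^q$ back to $\exp L^p$ using the $L^q\to L^\infty$ and $L^q\to L^q$ smoothing of $e^{-t\mathcal L}$. Here the small-time (local, Gaussian) regime of Lemma~\ref{themlplq} gives the decay $t^{-d/(2q)}$. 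Combined with the embedding $L^q\cap L^\infty\hookrightarrow \exp L^p$, whose norm is controlled by $\|\cdot\|_q+\|\cdot\|_\infty/\ln^{1/p}(\ldots)$, this gives
\[
\Big\|\int_0^t\cdots\Big\|_{\exp L^p}\lesssim C(M)\,\big(T+T^{1-d/(2q)}\big)\,\varepsilon .
\]
Choosing $q>d/2$ and $T$ small makes this at most $\varepsilon$. The same computation, applied to differences, gives the contraction in $L^\infty_t L^r$. Closedness of $Y_T$ in this weaker metric follows from Fatou's lemma in the Luxemburg norm.

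\textbf{Step 3: uniqueness and continuity.} Uniqueness in the full class $C([0,T],\exp L^p_0)$ goes as follows. Any two solutions $u_1,u_2$ can be written, on a short interval, as $v+w_i$ with $w_i$ small. This uses the fact that continuity into $\exp L^p_0$, together with approximation by $C_0^\infty$ functions, lets one choose a single $v_0$ for which both $\|u_i(t)-v(t)\|$ are small near $0$. One then repeats the argument at later times by a continuation argument. Continuity of the Duhamel term in $t$ with values in $\exp L^p_0$ follows from the $L^q\cap L^\infty$ bounds and from density.

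\textbf{Main obstacle.} The hardest step is the nonlinear Orlicz estimate in Step 2. It requires tracking how the smallness of $\|w\|_{\exp L^p}$ controls $e^{\lambda 2^{p-1}|w|^p}-1$ in some $L^a$, and then matching exponents with the semigroup smoothing rate so that the time singularity $t^{-d/(2q)}$ is integrable. The first thing to try is to take $q$ large, with $a$ accordingly large and $\varepsilon$ correspondingly small.
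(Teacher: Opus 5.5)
Your existence argument follows the paper. You split $u_0=v_0+w_0$ with $v_0\in C_0^\infty$, solve for $v$ in $L^p\cap L^\infty$ (Lemma~\ref{lemma7}), and treat $w=u-v$ by a small-data fixed point (Lemma~\ref{lemma8}).

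The one real difference is the metric. You contract in $L^\infty_tL^r$; the paper contracts in the $L^\infty(0,T;\exp L^p)$ norm. It does so via Lemma~\ref{lemma9}, $\|f(w_1+v)-f(w_2+v)\|_q\le Ce^{2^{p-1}\lambda\|v\|_\infty^p}\|w_1-w_2\|_{\exp L^p}$. That bound follows from your own H\"older step plus Lemma~\ref{lemma6} and $\exp L^p\hookrightarrow L^q$ for $q\ge p$, so the ball in $C([0,T],\exp L^p_0)$ is complete for free.

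With your weaker metric, the claim that $Y_T$ is closed ``by Fatou'' is wrong as stated. Fatou keeps $\|w(t)\|_{\exp L^p}\le 2\varepsilon$, but not membership in $\exp L^p_0$. For example, take $g=(\ln(1/|x|))^{1/p}$ on $|x|<1/2$. Its truncations lie in $\exp L^p_0$, are bounded in $\exp L^p$, and converge in every $L^r$, yet $g\notin\exp L^p_0$. Either drop the continuity requirement from the ball and recover it a posteriori from $w=\Phi(w)$, or simply contract in the $\exp L^p$ norm.

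A second slip: $L^q\cap L^\infty\hookrightarrow\exp L^p$ only for $q\le p$, while you need $q>d/2$. So, as in \eqref{eq28}--\eqref{eq29}, estimate the Duhamel term separately in $L^p$ (by $L^p$-contractivity) and in $L^\infty$ (by $L^q\to L^\infty$ with $q>\max\{p,d/2\}$). For the contraction in $L^\infty_tL^r$ you likewise need the smoothing $L^\rho\to L^r$ with $1/\rho=1/r+1/a$, $a>d/2$. This is not literally ``the same computation''. Both points are repairs, not missing ideas.

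Your uniqueness argument is genuinely different from the paper's. The paper takes two arbitrary solutions in $C([0,T],\exp L^p_0)$ and uses Lemma~\ref{prop3}: $e^{\alpha|u|^p}-1\in C([0,T],L^r)$ for every $\alpha>0$. This is where the ``every $\alpha$'' integrability of $\exp L^p_0$ enters. It bounds the exponential factors uniformly, with no smallness of the solutions, and yields $\sup\|u_1-u_2\|_{\exp L^p}\le C(t+t^{1-d/(2r)})\sup\|u_1-u_2\|_{\exp L^p}$. It then closes with a sup-of-coincidence-time argument.

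You instead reuse the existence machinery. You fix one $v_0$ with $\|u_0-v_0\|_{\exp L^p}\le\varepsilon/2$, and continuity of $u_i$ and $v$ puts both $w_i=u_i-v$ in the small ball on a short interval. Both $w_i$ solve the perturbed Duhamel equation, so they coincide by the contraction. You then restart at the first separation time, using $u_i(t_0)\in\exp L^p_0$.

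This is correct and needs nothing beyond Steps 1--2; the $\exp L^p_0$ structure enters through density of $C_0^\infty$ at $t_0$. The paper's version works directly on the difference of the two solutions and avoids constructing an auxiliary regular solution, at the price of importing Lemma~\ref{prop3}.
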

Condition \eqref{eq3} is fulfilled, for instance, by $f(u)=\pm u e^{|u|^p}$. In addition, by the uniqueness result of Theorem \ref{theo1}, the local solution $u$ of \eqref{eq1} 
can be extended to a maximal interval $[0,T_{\max})$ 
by a standard continuation argument 
(see, e.g., Cazenave and Haraux~\cite{CH}), where
\[
T_{\max}
:=
\sup\left\{
T>0  : \;
\text{there exists a mild solution }
u\in C([0,T],\exp L^p_0(\mathbb{R}^d))
\text{ to \eqref{eq1}}
\right\}
.
\]
Moreover, if $T_{\max}<\infty$, then the blow-up alternative holds:
\[
\lim_{t\to T_{\max}}
\|u(t)\|_{L^p(\mathbb{R}^d)\cap L^\infty(\mathbb{R}^d)}
=\infty.
\]

 The proof of Theorem \ref{theo1} relies on semigroup methods combined with a decomposition technique based on the density of $C_0^\infty(\mathbb{R}^d)$ in $\exp L^p_0(\mathbb{R}^d)$ (see, e.g.,~\cite{Ibrahim, Ioku, Majdoub1, Majdoub2}). This approach enables us to establish both existence and uniqueness of solutions to \eqref{eq1} in the $\exp L^p$ framework. Given $u_{0} \in \exp L^{p}_{0}(\mathbb{R}^{d})$, we decompose $u_{0} = v_{0} + w_{0}$, where $v_{0} \in C_{0}^{\infty}(\mathbb{R}^{d})$ and $w_{0} \in \exp L^{p}(\mathbb{R}^{d})$ is chosen to be sufficiently small in the corresponding norm. We first solve \eqref{eq1} with smooth initial data $v_{0}$ via classical semigroup arguments, obtaining a local-in-time bounded solution $v$. Setting $w := u - v$, we are led to a perturbed problem with small initial data. The latter is handled via the Banach fixed-point theorem applied to the integral formulation~\eqref{eq2} in a suitable Banach space adapted to $\exp L^p$. The argument relies crucially on semigroup estimates associated with the mixed local-nonlocal operator $\mathcal{L}$ (see \eqref{pqbound} and Proposition \ref{techprop1}), together with the growth condition \eqref{eq3}. This procedure yields the existence of the desired solution $u = v + w$. Uniqueness follows from similar estimates in the same functional framework. Further details and related techniques can be found in the literature (see, e.g., \cite{FK20,Ioku,Majdoub2}).
\begin{remark}
The solution obtained in Theorem~\ref{theo1} belongs to 
$L^\infty_{\mathrm{loc}}(0,T;L^\infty(\mathbb{R}^d))$. Indeed, let 
$
u\in C([0,T],\exp L^p_0(\mathbb{R}^d))
$
be a mild solution of \eqref{eq1}, that is, a solution of the integral equation \eqref{eq2}. 
Using the $L^p-L^\infty$ estimate \eqref{pqbound} together with 
Lemma~\ref{lemma1}-(3), we obtain
\[
\|e^{-t\mathcal{L}}u_0\|_{L^\infty}
\le C t_s^{-\frac{d}{2sp}}
\|u_0\|_{L^p}
\le C t_s^{-\frac{d}{2sp}}
\|u_0\|_{\exp L^p},
\qquad 0<t<T.
\]
Hence $e^{-t\mathcal{L}}u_0 \in  L^\infty_{\mathrm{loc}}(0,T;L^\infty(\mathbb{R}^d)$. It remains to estimate the nonlinear term. Fix $r>\max\{p,\frac{d}{2}\}$. 
By the $L^r-L^\infty$ estimate \eqref{pqbound}, we have
\[
\begin{aligned}
\left\|\int_0^t e^{-(t-\tau)\mathcal{L}} f(u(\tau))\, d\tau \right\|_{L^\infty}
&\le \int_0^t 
\|e^{-(t-\tau)\mathcal{L}} f(u(\tau))\|_{L^\infty}
\, d\tau \\
&\le C \int_0^t (t-\tau)_s^{-\frac{d}{2sr}}
\|f(u(\tau))\|_{L^r}
\, d\tau \\
&\le C\,t^{1-\frac{d}{2r}}
\sup_{0\le \tau\le T}
\|f(u(\tau))\|_{L^r}
<\infty,
\end{aligned}
\]
in view of Corollary~\ref{corollary1}. Consequently, $u \in L^\infty_{\mathrm{loc}}(0,T;L^\infty(\mathbb{R}^d))$.
Furthermore, if $f\in C^1(\mathbb{R})$, then the additional regularity of the solution implies
\[
u \in C([0,T],\exp L^p_0(\mathbb{R}^d))
\cap L^\infty_{\mathrm{loc}}(0,T;L^\infty(\mathbb{R}^d))
\]
satisfies \eqref{eq1} in the classical sense, namely,
$C^1$ in time and $C^2$ in space.
\end{remark}

We now examine the global existence and decay behavior of solutions to the Cauchy problem \eqref{eq1}. In this context, the behavior of the nonlinearity $f(u)$ in a neighborhood of the origin is of fundamental importance. To this end, we allow $f$ to satisfy the following growth condition near zero:
\[
|f(u)| \sim |u|^m,
\]
where the parameters are such that $d(m-1)/(2s) > 1$. The reason of this assumption is to reach the Fujita exponent $1+2s/d$ as obtained in \cite{KT2026}. More precisely, we assume that
\begin{equation}\label{eq4a}
f(0)=0,\qquad 
|f(u)-f(v)| \leq C\,|u-v|\left(|u|^{m-1}e^{\lambda |u|^p} + |v|^{m-1}e^{\lambda |v|^p}\right),
\end{equation}
for all $u,v \in \mathbb{R}$, where $C>0$, $\lambda>0$, and the exponents satisfy $d(m-1)/(2s) \geq p > 1$. A canonical example of a nonlinearity satisfying \eqref{eq4a} is $f(u)=\pm |u|^{m-1}u\,e^{|u|^p}$, with $m \geq 1+2sp/d$. We state our global existence result in the form of the following theorem.
\begin{theorem}\label{theo2}\textup{(Global existence).} Let $d \ge 1$ and $s\in(0,1)$. 
Assume that the nonlinearity $f$ satisfies \eqref{eq4a} for some $m \ge p$ and
$1 < p \le d(m-1)/(2s)$ with $C$ and $\lambda$ being positive constants.
Then there exists $\varepsilon > 0$ such that, for every initial data 
$u_0 \in \exp L^p(\mathbb{R}^d)$ with $\|u_0\|_{\exp L^p} \le \varepsilon,$ problem \eqref{eq1} admits a global weak-mild solution
$
u \in L^\infty\big((0,\infty), \exp L^p(\mathbb{R}^d)\big).
$
Moreover, the solution satisfies
\begin{equation}\label{eq5}
\lim_{t \to 0}
\|u(t) - e^{-t\mathcal{L}}u_0\|_{\exp L^p} = 0.
\end{equation}
In addition, there exists a constant $C>0$ such that
\begin{equation} \label{decay1.8}
    \|u(t)\|_{q} 
\le C\, t_s^{-(\frac{1}{m-1} - \frac{d}{2s q})},
\qquad \text{for all } t>0,
\end{equation}
where  $t_s:=\max\{t^s,t\}$ and the exponent $q$ satisfies:
\begin{itemize}
\item If $s < \dfrac{d(p-1)}{2p}$,\quad then\, $\dfrac{d(m-1)}{s} < 2q < \dfrac{d(m-1)}{s}\dfrac{1}{(2-m)_+}$.
\item If $s = \dfrac{d(p-1)}{2p}$,\quad then\, $\dfrac{d(m-1)}{s} < 2q < \dfrac{d(m-1)}{s}\dfrac{1}{(2-m)_+}$.
\item If $s > \dfrac{d(p-1)}{2p}$ and $(2-m)_+ < \dfrac{d(p-1)}{2ps}$,\quad then\, $$\dfrac{2(m-1)p}{p-1} < 2q < \dfrac{d(m-1)}{s}\dfrac{1}{(2-m)_+}.$$
\end{itemize}
Here, $(\cdot)_+$ denotes the positive part.
\end{theorem}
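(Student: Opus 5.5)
We argue by a fixed point in a weighted-in-time space, following the scheme of Majdoub--Tayachi and Fino--Kirane, with the heat-kernel estimates for $\mathcal L$ stated earlier (the $L^r$--$L^q$ bound \eqref{pqbound} with decay factor $t_s^{-\frac d{2s}(\frac1r-\frac1q)}$, and the boundedness of $e^{-t\mathcal L}$ on $\exp L^p$ from Proposition~\ref{prop2} and Proposition~\ref{techprop1}). Fix $q$ in the admissible range of the theorem and set $\sigma:=\frac1{m-1}-\frac d{2sq}>0$. We work in
\[
Y_M:=\Big\{u\in L^\infty((0,\infty),\exp L^p):\ \sup_{t>0}\|u(t)\|_{\exp L^p}+\sup_{t>0}t_s^{\sigma}\|u(t)\|_{q}\le M\Big\},
\]
with the metric $d(u,v)=\sup_{t>0}t_s^{\sigma}\|u(t)-v(t)\|_q$. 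We show that $\Phi(u)(t)=e^{-t\mathcal L}u_0+\int_0^te^{-(t-\tau)\mathcal L}f(u(\tau))\,d\tau$ maps $Y_M$ into itself and is a contraction for $\varepsilon,M$ small. Completeness of $(Y_M,d)$ follows from Fatou's lemma and the weak$^*$ lower semicontinuity of the Luxemburg norm.

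\emph{Linear part.} By Proposition~\ref{prop2}, $\|e^{-t\mathcal L}u_0\|_{\exp L^p}\le C\|u_0\|_{\exp L^p}$. For the weighted $L^q$ bound we use the embedding $\exp L^p\hookrightarrow L^r$ for $p\le r<\infty$ (Lemma~\ref{lemma1}) with $r=\frac{d(m-1)}{2s}\ge p$. Then \eqref{pqbound} gives $\|e^{-t\mathcal L}u_0\|_q\le Ct_s^{-\frac d{2s}(\frac1r-\frac1q)}\|u_0\|_{\exp L^p}=Ct_s^{-\sigma}\varepsilon$, since $\frac d{2sr}=\frac1{m-1}$. The lower constraints on $q$ ensure $q>r$, and when $r$ is replaced by some $\tilde r\ge p$ for small times, also $q>\frac{(m-1)p}{p-1}$.

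\emph{Nonlinear part.} This is where the three regimes of $s$ enter. For $u\in Y_M$, \eqref{eq4a} together with $|e^{\lambda|u|^p}-1|$ bounds and Hölder gives
\[
\|f(u)\|_{L^{\gamma}}\le C\|u\|_{q}^{\,m-?}\cdots,
\]
more precisely $\|f(u)-f(v)\|_{\gamma}\le C\|u-v\|_q\big(\|u\|_{(m-1)\kappa}^{m-1}+\cdots\big)\|e^{\lambda|u|^p}\|_{\ell}$. The factor $\|e^{\lambda|u|^p}-1\|_{\ell}$ stays bounded uniformly once $\|u\|_{\exp L^p}\le M$ with $\ell\lambda M^p\le1$, using the expansion $e^{x}-1=\sum x^k/k!$ and Lemma~\ref{lemma1}. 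The remaining $L^{(m-1)\kappa}$ norms are interpolated between $\|u\|_q$ (weighted decay) and $\|u\|_{\exp L^p}$. Inserting this into the $L^\gamma\to L^q$ smoothing yields the integral
\[
\int_0^t (t-\tau)_s^{-\frac d{2s}(\frac1\gamma-\frac1q)}\,\tau_s^{-m\sigma}\,d\tau\lesssim t_s^{-\sigma},
\]
which we estimate by splitting into $\tau<t/2$ and $\tau>t/2$, and further into $t\le1$ (where $t_s=t^s$ and the local Gaussian part dominates) and $t\ge1$ (where $t_s=t$). Integrability near $\tau=0$ requires $m\sigma$ times the appropriate power below $1$, which is exactly the upper constraint $2q<\frac{d(m-1)}{s(2-m)_+}$. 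Convergence at the diagonal and the choice of $\gamma$ compatible with $\exp L^p$ produce the lower thresholds; the case split $s\lessgtr\frac{d(p-1)}{2p}$ arises from whether $\frac{d}{2s}(\frac1p-\frac1q)<1$ can be ensured directly. The $\exp L^p$-norm of the Duhamel term is handled via Proposition~\ref{techprop1}, which bounds $\|e^{-t\mathcal L}g\|_{\exp L^p}$ by $\|g\|_{L^{\gamma_1}\cap L^{\gamma_2}}$ with time factors, and the same split-integral argument applies. This bookkeeping, keeping every exponent integrable in both the small-time ($t^s$) and large-time ($t$) regimes, is the main obstacle. We would first settle the case $m\ge2$, where $(2-m)_+=0$, and then track the restriction for $m<2$.

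\emph{Conclusion.} The contraction mapping principle gives a unique fixed point $u\in Y_M$, which is a global weak-mild solution: weak$^*$ convergence to $u_0$ follows from $e^{-t\mathcal L}u_0\rightharpoonup^* u_0$ combined with \eqref{eq5}. The decay estimate \eqref{decay1.8} is built into $Y_M$. For \eqref{eq5}, we bound $\|\int_0^t\cdots\|_{\exp L^p}$ by the same estimates restricted to $[0,t]$. The resulting power of $t$ is positive for $t\le1$, so the term tends to $0$ as $t\to0$.
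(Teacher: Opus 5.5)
\textbf{There is a genuine gap: the uniform-in-time $\exp L^p$ bound on the Duhamel term is missing, and the route you indicate for it fails for large $t$.}

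Your space $Y_M$, the metric $d(u,v)=\sup_{t>0}t_s^{\sigma}\|u(t)-v(t)\|_q$, the linear estimates, and your handling of \eqref{eq5} and the weak$^*$ limit all coincide with the paper's case $s\ge d(p-1)/(2p)$. The problem is the step the theorem rests on. You need a bound on $\bigl\|\int_0^t e^{-(t-\tau)\mathcal L}f(u(\tau))\,d\tau\bigr\|_{\exp L^p}$ that is uniform in $t\in(0,\infty)$, so that $\Phi(Y_M)\subset Y_M$.

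You propose to bound $\|e^{-t\mathcal L}g\|_{\exp L^p}$ by $\|g\|_{L^{\gamma_1}\cap L^{\gamma_2}}$, i.e.\ Proposition~\ref{techprop1}-(3). This cannot work for large $t$:
\begin{itemize}
\item The term $\|g\|_{\gamma_2}$, with $\gamma_2\le p$, carries no decay in $t-\tau$, so you would need $\int_0^\infty\|f(u(\tau))\|_{\gamma_2}\,d\tau<\infty$.
\item Interpolating between the uniform $\exp L^p$ bound and the $L^q$ decay available in $Y_M$ gives, for the leading part, only $\||u(\tau)|^m\|_{\gamma_2}\lesssim\tau^{-\delta}$ with $\delta\le1$. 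For example, $\delta=\frac{q-d(m-1)/(2s)}{q-p}$ when $\gamma_2=p$ and $q\ge mp$. The bound $\delta\le1$ holds precisely because $p\le d(m-1)/(2s)$, so the integral is not uniformly bounded.
\end{itemize}

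The missing idea is to use Proposition~\ref{techprop1}-(2) when $t-\tau$ is large. Since $\bigl(\ln(t_s^{-d/(2s)}+1)\bigr)^{-1/p}\approx t_s^{d/(2sp)}$ for $t\gg1$, item (2) gives an $L^a\to L^p$-type rate $t^{-\frac d{2s}(\frac1a-\frac1p)}$. This, and not ``whether $\frac d{2s}(\frac1p-\frac1q)<1$'', is where the threshold $d(p-1)/(2p)$ comes from:
\begin{itemize}
\item For $s<\frac{d(p-1)}{2p}$: with $a=1$ the rate is integrable at infinity exactly in this range (Lemma~\ref{lemma4}).
\item For $s=\frac{d(p-1)}{2p}$: the paper splits $\|\cdot\|_{\exp L^p}\approx\|\cdot\|_p+\|\cdot\|_{L^\phi}$. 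The $L^p$ part is handled by the scale-invariant Beta-function argument behind \eqref{eq113}, and the $L^\phi$ part by Lemma~\ref{lemma5}.
\item For $s>\frac{d(p-1)}{2p}$: the Duhamel integral is split at $\tau=t-b^{-2s/d}$ and \eqref{eq18} is used. The H\"older requirement $\frac1p+\frac1a\le1$, with $\frac{d}{2sa}=1-\sigma(kp+m-1)\theta_k$, is what forces $q>\frac{(m-1)p}{p-1}$. Together with $\sigma<1$, this gives the condition $(2-m)_+<\frac{d(p-1)}{2ps}$ in the third bullet.
\end{itemize}

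\textbf{The weighted $L^q$ estimate is also unfinished, and the mechanism you state for the upper bound on $q$ is wrong as written.}

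The estimate still contains the placeholder ``$m-?$''. With the singularity $\tau_s^{-m\sigma}$ you write (all of $|u|^{m-1}$ placed in $L^q$), integrability needs $m\sigma<1$. That means $q<\frac{dm(m-1)}{2s}$, a restriction absent from the theorem even when $m\ge2$.

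To get the stated range, proceed as follows:
\begin{itemize}
\item Interpolate $\|u\|_{(m-1)a}$ between $L^q$ and some $L^\rho$ with $\rho\ge p$, putting a small weight $\theta$ on $L^q$. The singularity then becomes $\tau_s^{-\sigma(1+(m-1)\theta)}$, which is integrable for small $\theta$ iff $\sigma<1$. That condition is exactly $2q<\frac{d(m-1)}{s(2-m)_+}$.
\item Choose $\rho$ so that the exponents balance to $t_s^{-\sigma}$.
\item Keep $\theta$ large enough that the H\"older exponent stays $\ge1$.
\end{itemize}
These choices are compatible because $q>\frac{(m-1)p}{p-1}\ge m$ and $m-1>\frac{2s}{d}$.

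Once these parameters are fixed, your use of Lemma~\ref{lemma6} with $\ell$ large to control $e^{\lambda|u|^p}-1$ is a legitimate alternative to the paper's power-series expansion. It spares the $\Gamma(\rho_k/p+1)$ bookkeeping.

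Minor point: the bound $\|e^{-t\mathcal L}u_0\|_{\exp L^p}\le\|u_0\|_{\exp L^p}$ is Proposition~\ref{techprop1}-(1). Proposition~\ref{prop2} concerns data in $\exp L^p_0$.
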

The proof of Theorem~\ref{theo2} is based on refined mapping estimates of the semigroup $\{e^{-t\mathcal{L}}\}_{t\geq 0}$ (see Proposition \ref{techprop1}), obtained via heat kernel bounds and interpolation arguments in Orlicz spaces. We then implement a fixed-point argument in a complete metric space defined through time-weighted $\exp L^{p}$ norms. This framework allows us to derive suitable a priori estimates and control the nonlinear term, leading to the global existence result for small initial data, as well as estimate \eqref{decay1.8}.

\begin{remark}
The decay estimate \eqref{decay1.8} highlights a unique asymptotic transition that bridges local and non-local diffusion theories. Its primary novelty lies in the short-time regime ($t \to 0$), where the solution mimics the decay behavior of the local heat equation through the effective power $\tilde{m}$ such that $\tilde{m}-1:= (m-1)/s,$ that is, $\tilde{m}:=(m-(1-s))/s$, aligning with the local smoothing effects discussed in \cite{Majdoub2,Majdoub3}. As time progresses ($t \to \infty$), the estimate naturally transitions to recover the purely non-local asymptotics of the fractional Laplacian $(-\Delta)^s$ established in \cite{FK20}. By unifying these two scales into a single estimate, \eqref{decay1.8} demonstrates that non-local effects are not instantaneous but emerge over time, providing a continuous interpolation between singular local dynamics and heavy-tailed fractional dissipation.
\end{remark}

\begin{remark}
For the analysis of Theorem~\ref {theo2}, we distinguish three cases according to the relationship between $s$ and the critical threshold $d(p-1)/(2 p)$, namely $s<d(p-1)/(2 p)$, $s>d(p-1)/(2 p)$, and $s=d(p-1)/(2 p)$.
This distinction reflects the balance between the smoothing effect of the semigroup $e^{-t\mathcal{L}}$ and the growth 
of exponential-type nonlinearity.  In particular, the threshold $~d(p-1)/(2p)$ arises naturally from the time-integrability condition required in the  estimate of the nonlinear term in the mild formulation.\\
 Moreover, in the case $s>d(p-1)/(2p)$, it is necessary to choose $m>p$. Indeed, taking $m=p$ would yield $s>d(m-1)/(2m)$, while condition $d(m-1)/s \ge 2p$ implies $s \le d(m-1)/(2m),$ leading to a contradiction. Therefore, the choice $m=p$ is not admissible in this regime.
\end{remark}

The paper is organized as follows.
In Section~\ref{sec2}, we recall the definition and basic properties of the Orlicz spaces $L^\phi(\mathbb{R}^d)$ and $L^\phi_{0}(\mathbb{R}^d)$, along with some fundamental properties of the heat semigroup $e^{-t\mathcal{L}}$. 
In Section~\ref{sec3}, we establish the mapping properties of the heat semigroup $e^{-t\mathcal{L}}$ in the Orlicz spaces $\exp L^p(\mathbb{R}^d)$ and $\exp L^p_0(\mathbb{R}^d)$. In Section~\ref{sec4}, we prove the local well-posedness result (Theorem~\ref{theo1}) for initial data $u_{0} \in \exp L^p_0(\mathbb{R}^d)$. Finally, in Section~\ref{sec5}, we establish the global existence result (Theorem~\ref{theo2}) for sufficiently small initial data $u_{0} \in \exp L^p(\mathbb{R}^d)$.

Throughout the paper, $C>0$ denotes a generic constant whose value may change from line to line. 
We write $X \lesssim Y$ (or equivalently $Y \gtrsim X$) to indicate that there exists a constant $C>0$ such that $X \leq C Y$. We write $X \approx Y$ if both $ X\lesssim Y$ and $Y\lesssim X$ hold. Unless otherwise specified, $L^q$ and $\exp L^q$ stands for   $L^q(\mathbb{R}^d)$  and $\exp L^q(\mathbb{R}^d)$, respectively, for $1\leq q\leq
\infty$. We denote by $\|\cdotp\|_q$ the usual $L^q(\mathbb{R}^d)$-norm, and by $\|\cdotp\|_{\exp L^q}$  the norm in the exponential Orlicz space $\exp L^q(\mathbb{R}^d)$.

\section{Preliminaries}\label{sec2}
This section is devoted to recalling the basic definitions and fundamental results concerning Orlicz spaces that will be needed in our analysis. 
For more comprehensive treatments and detailed presentations, we refer the reader to \cite{Adams,Rao,Trudinger}. We conclude this section by collecting some essential properties of  the heat semigroup associated with the mixed local-nonlocal operator $\mathcal{L}$.
\subsection{Orlicz space and its properties}
\begin{definition}[Orlicz space and subspaces]
Let $\phi:\mathbb{R}^+ \to \mathbb{R}^+$ be a convex and increasing function satisfying
\[
\phi(0)=0, 
\qquad 
\lim_{\tau\to0^+}\phi(\tau)=0, 
\qquad 
\lim_{\tau\to\infty}\phi(\tau)=\infty.
\]
Then the Orlicz space $L^\phi(\mathbb{R}^d)$ is defined by
\[
L^\phi(\mathbb{R}^d)
=
\left\{
u\in L^1_{\mathrm{loc}}(\mathbb{R}^d)
:\,
\int_{\mathbb{R}^d}
\phi\!\left(\frac{|u(x)|}{\lambda}\right)dx
<\infty,
\text{ for some } \lambda>0
\right\}.
\]
The space is endowed with the Luxemburg norm
\[
\|u\|_{L^\phi(\mathbb{R}^d)}
=
\inf\left\{
\lambda>0:
\int_{\mathbb{R}^d}
\phi\!\left(\frac{|u(x)|}{\lambda}\right)dx
\le 1
\right\}.
\]
We also introduce the subspace
\[
L^\phi_0(\mathbb{R}^d)
=
\left\{
u\in L^1_{\mathrm{loc}}(\mathbb{R}^d)
:\,
\int_{\mathbb{R}^d}
\phi\!\left(\frac{|u(x)|}{\lambda}\right)dx
<\infty,
\text{ for every } \lambda>0
\right\}.
\]
Clearly, $L^\phi_0(\mathbb{R}^d) \subset L^\phi(\mathbb{R}^d)$.
It is known (as in Ioku et al. \cite{Ioku}) that
$
L^\phi_0(\mathbb{R}^d)
=
\overline{C_0^\infty(\mathbb{R}^d)}^{\|\cdot\|_{L^\phi}},
$. Moreover, both 
$(L^\phi(\mathbb{R}^d),\|\cdot\|_{L^\phi})$ 
and 
$(L^\phi_0(\mathbb{R}^d),\|\cdot\|_{L^\phi})$ 
are Banach spaces.\\
\textbf{Special cases.}
\begin{itemize}
\item If $\phi(s)=s^p$ with $1\le p<\infty$, then 
$L^\phi(\mathbb{R}^d)=L^\phi_0(\mathbb{R}^d)=L^p(\mathbb{R}^d)$, 
and the Luxemburg norm coincides with the usual $L^p$ norm.
\item If $\phi(s)=e^{s^p}-1$ with $1\le p<\infty$, then 
$L^\phi(\mathbb{R}^d)$ coincides with the exponential Orlicz space 
$\exp L^p(\mathbb{R}^d)$, while 
$L^\phi_0(\mathbb{R}^d)$ corresponds to 
$\exp L^p_0(\mathbb{R}^d)$.
\end{itemize}
\noindent
\textbf{A basic property.}
Let $u\in L^\phi(\mathbb{R}^d)$ and set
$
K= \|u\|_{L^\phi} > 0.
$
By definition,
\[
K
=
\inf\left\{
\lambda>0:
\int_{\mathbb{R}^d}
\phi\!\left(\frac{|u(x)|}{\lambda}\right)dx
\le 1
\right\}.
\]
Since $\phi$ is increasing, if $\lambda$ satisfies
$
\displaystyle \int_{\mathbb{R}^d}
\phi\left(\frac{|u(x)|}{\lambda}\right)dx
\le 1,
$
then the same inequality holds for every $\lambda'>\lambda$.
Hence, the admissible set of $\lambda$ is of the form $[K,\infty)$, and in particular,
$$
\int_{\mathbb{R}^d}
\phi\!\left(
\frac{|u(x)|}{\|u\|_{L^\phi}}
\right)dx
\le 1.
$$
\end{definition}

The following lemma summarizes some embedding (inclusion) properties between Orlicz and Lebesgue spaces.
\begin{lem}[Inclusion properties]\label{lemma1} We have the following inclusion relations:

\begin{itemize}
\item[({1})] 
\cite[Lemma 2.3]{Majdoub2}
For every $1\leq q\leq p$, we have $$L^q(\mathbb{R}^d)\cap L^\infty(\mathbb{R}^d)\hookrightarrow \exp L^p_0(\mathbb{R}^d)\hookrightarrow \exp L^p(\mathbb{R}^d).$$ More precisely, we have 
\begin{equation}\label{eq6}
\|u\|_{\exp L^p}\leq\frac{1}{(\ln 2)^{1/p}}(\|u\|_q+\|u\|_\infty).
\end{equation}
\item[({2})] \cite[Lemma 2.3]{FK20}${}$
Let $\phi(\tau)=e^{\tau^p}-1-\tau^p$, $p>1$. For every $ q\leq 2p$, we have $L^q(\mathbb{R}^d)\cap L^\infty(\mathbb{R}^d)\hookrightarrow L^\phi_0(\mathbb{R}^d)\hookrightarrow L^\phi(\mathbb{R}^d)$, more precisely
$$
\|u\|_{ L^\phi}\leq C(p)(\|u\|_q+\|u\|_\infty).
$$
\item[({3})] \cite[Lemma 2.4]{Majdoub2}
For every $1\leq p\leq q<\infty$, we have $\exp L^p(\mathbb{R}^d)\hookrightarrow L^q(\mathbb{R}^d)$, more precisely
\begin{equation}\label{eq8}
\|u\|_q\leq\left(\Gamma\left(\frac{q}{p}+1\right)\right)^{1/q} \|u\|_{\exp L^p}.
\end{equation}
\end{itemize}
\end{lem}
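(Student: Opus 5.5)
The three inclusions can each be proved directly from the Luxemburg norm by expanding the exponential in its power series and comparing term by term with Lebesgue norms.

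\textbf{Part (1).} Let $u\in L^q\cap L^\infty$ with $1\le q\le p$, and set $\lambda=(\|u\|_q+\|u\|_\infty)/(\ln 2)^{1/p}$. Since $|u|/\lambda\le(\ln 2)^{1/p}$ pointwise, the function $\tau\mapsto(e^{\tau}-1)/\tau$ is increasing on $[0,\ln 2]$, which gives
\[
e^{|u|^p/\lambda^p}-1\le \frac{|u|^p}{\lambda^p\ln 2}.
\]
We then write $|u|^p\le \|u\|_\infty^{p-q}|u|^q$ and integrate. The resulting bound is $\|u\|_\infty^{p-q}\|u\|_q^q/(\lambda^p\ln 2)$, which is at most $(\|u\|_q+\|u\|_\infty)^p/(\lambda^p\ln 2)=1$. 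This yields \eqref{eq6}.

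For membership in $\exp L^p_0$, the same computation with $\lambda$ replaced by any $\mu>0$ still gives a finite integral. Indeed, on the bounded range of $|u|/\mu$ we have $e^{\tau^p}-1\le C_\mu\tau^p$. Finiteness for every $\mu>0$ is exactly the characterization of $\exp L^p_0$ recalled in the introduction.

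\textbf{Part (2).} Here $\phi(\tau)=e^{\tau^p}-1-\tau^p=\sum_{k\ge2}\tau^{pk}/k!$. The argument is the same: on $\{\tau\le c\}$ we have $\phi(\tau)\le C(c)\tau^{2p}$. Since $q\le 2p$, we bound $|u|^{2p}\le\|u\|_\infty^{2p-q}|u|^q$. Choosing $\lambda=C(p)(\|u\|_q+\|u\|_\infty)$ with $C(p)$ large enough, and noting that the normalized function is bounded by $1/C(p)$, makes the integral at most $1$. Membership in $L^\phi_0$ follows as in (1), since the integral is finite for every $\lambda$.

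\textbf{Part (3).} Let $p\le q<\infty$ and $K=\|u\|_{\exp L^p}$; by the basic property above, $\int(e^{|u|^p/K^p}-1)\,dx\le1$. I would try the distribution-function route first. Set $\Phi=\int(e^{|u|^p/K^p}-1)\le 1$. Chebyshev's inequality gives
\[
|\{|u|>t\}|\le \Phi/(e^{t^p/K^p}-1).
\]
Inserting this into $\|u\|_q^q=q\int_0^\infty t^{q-1}|\{|u|>t\}|\,dt$ and substituting $r=t^p/K^p$ leads to $K^q\,(q/p)\int_0^\infty r^{q/p-1}(e^r-1)^{-1}dr$, which equals $K^q\,\Gamma(q/p+1)\zeta(q/p)$. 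This diverges when $q=p$, so the crude tail bound is insufficient at the endpoint.

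The main obstacle is therefore obtaining exactly the constant $\Gamma(q/p+1)^{1/q}$. The plan to handle it is a pointwise inequality. Put $\theta=q/p\ge1$; it suffices to show $x^\theta\le\Gamma(\theta+1)(e^x-1)$ for all $x\ge0$, since then
\[
\frac{|u|^q}{K^q}=\Bigl(\frac{|u|^p}{K^p}\Bigr)^{\theta}\le\Gamma(\theta+1)\bigl(e^{|u|^p/K^p}-1\bigr),
\]
and integrating gives $\|u\|_q^q\le\Gamma(\theta+1)K^q$, which is \eqref{eq8}.

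To prove the pointwise inequality, if $\theta$ is an integer $n\ge1$ it is immediate from $e^x-1\ge x^n/n!$. For non-integer $\theta>1$, write $\theta=n+\sigma$ with $\sigma\in(0,1)$. By Hölder's inequality on the series, $x^{n+\sigma}\le (x^n)^{1-\sigma}(x^{n+1})^{\sigma}$, and we compare against $e^x-1\ge \max\{x^n/n!,\,x^{n+1}/(n+1)!\}$. This yields the bound $x^\theta\le (n!)^{1-\sigma}((n+1)!)^{\sigma}(e^x-1)$.

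Log-convexity of $\Gamma$ goes the wrong way to compare $(n!)^{1-\sigma}((n+1)!)^{\sigma}$ with $\Gamma(\theta+1)$, so if this is not enough I would instead verify the sharp form directly. The ratio $x^\theta/(e^x-1)$ attains its maximum at the point where $\theta(1-e^{-x})=x$, and one checks that this maximum is at most $\Gamma(\theta+1)$; equivalently one can cite \cite[Lemma 2.4]{Majdoub2}, where (3) is proved. Part (3) thus reduces entirely to this elementary one-variable inequality.
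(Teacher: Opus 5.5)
The paper gives no proof of this lemma; it only cites \cite[Lemmas~2.3, 2.4]{Majdoub2} and \cite[Lemma~2.3]{FK20}. Your self-contained arguments therefore go beyond the text.

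Parts (1) and (2) are correct as written. In (1), since $\tau=|u|^p/\lambda^p\le\ln 2$, monotonicity of $(e^\tau-1)/\tau$ gives $e^\tau-1\le\tau/\ln 2$. Combined with $\int|u|^p\le\|u\|_\infty^{p-q}\|u\|_q^q\le(\|u\|_q+\|u\|_\infty)^p$, this produces exactly the constant $(\ln 2)^{-1/p}$. In (2), $\phi(\tau)\le(e-2)\tau^{2p}$ for $\tau\le 1$, so you can even take $C(p)=1$. Membership in the ``$0$'' subspaces, via finiteness of the modular for every $\lambda$, is also fine.

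In (3), your reduction to the pointwise inequality $x^\theta\le\Gamma(\theta+1)(e^x-1)$ for $x\ge0$, $\theta=q/p\ge1$, is the right one. You also correctly see that the distribution-function route fails (its constant is $\Gamma(\theta+1)\zeta(\theta)$). Likewise, interpolating between $n!$ and $(n+1)!$ only gives the larger constant $n!(n+1)^\sigma\ge\Gamma(\theta+1)$.

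The gap is the decisive step, ``one checks that this maximum is at most $\Gamma(\theta+1)$'': it is asserted, not shown. Falling back on \cite[Lemma~2.4]{Majdoub2} is exactly what the paper does, so it does not close the gap.

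The step closes in two lines. Put $\alpha=\theta-1\ge0$ (with $0^0=1$). Then for all $x\ge0$,
\[
x^{\alpha}e^{-x}\le\alpha^\alpha e^{-\alpha}=\alpha^\alpha\int_\alpha^\infty e^{-t}\,dt\le\int_\alpha^\infty t^\alpha e^{-t}\,dt\le\Gamma(\alpha+1)=\Gamma(\theta),
\]
using $t^\alpha\ge\alpha^\alpha$ for $t\ge\alpha$. Hence $g(x)=\Gamma(\theta+1)(e^x-1)-x^\theta$ satisfies $g(0)=0$ and $g'(x)=\theta\bigl(\Gamma(\theta)e^x-x^{\theta-1}\bigr)\ge0$. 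So $g\ge0$ for every real $\theta\ge1$ at once, and no integer/non-integer split is needed.

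Equivalently, at your critical point $\theta(1-e^{-x_*})=x_*$ one has $e^{x_*}-1=x_*e^{x_*}/\theta$. The maximum therefore equals $\theta x_*^{\theta-1}e^{-x_*}\le\theta\Gamma(\theta)=\Gamma(\theta+1)$.

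With this inserted, integrating $|u|^q/K^q\le\Gamma(\theta+1)\bigl(e^{|u|^p/K^p}-1\bigr)$ gives \eqref{eq8}, and your proof of all three parts is complete.
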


\subsection{Heat semigroup of \texorpdfstring{$\mathcal{L}$}{L} and its properties}  For the material of this subsection, we cite  \cite{Biagi, Castillo3, KFA25} and references therein. We begin with the following result regarding the integral representation of the heat semigroup associated with the mixed local-nonlocal operator. 
\begin{lemma}[Heat kernel representation]
Let $e^{-t\mathcal{L}}$ be the heat semigroup on $L^{q}(\mathbb{R}^{d})$ $(q \geq 1)$ generated by $-\mathcal{L}$. Then
\begin{equation} \label{heatsemimix}
e^{-t\mathcal{L}} \phi(x)=\int_{\mathbb{R}^{d}} P^s_{t}(x-y)\,\phi(y)\,dy, \quad \phi \in L^{q}(\mathbb{R}^{d}),
\end{equation}
where
$$
P^s_{t}(x)=\frac{1}{(4\pi t)^{d/2}}\int_{\mathbb{R}^d} e^{-\frac{|x-\xi|^2}{4t}} H_t^{s}(\xi)\, d\xi, 
\quad t>0,\; x\in\mathbb{R}^d,
$$
is the heat kernel of $\mathcal{L}$. Here, $H_t^{s}$ denotes the heat kernel of the operator $-(-\Delta)^s$.
\end{lemma}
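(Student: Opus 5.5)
The plan is to derive the representation from the Fourier side. The symbol of $\mathcal{L}$ is $|\xi|^2+|\xi|^{2s}$, and $-\Delta$ commutes with $(-\Delta)^s$. Hence, on $L^2(\mathbb{R}^d)$, we define
\[
\widehat{e^{-t\mathcal{L}}\phi}(\xi)=e^{-t|\xi|^2}e^{-t|\xi|^{2s}}\hat\phi(\xi).
\]
This is the semigroup generated by $-\mathcal{L}$ via the functional calculus, since $\mathcal{L}$ is the self-adjoint Fourier multiplier with that symbol. The multiplier factors as the product of the symbol of the Gauss–Weierstrass kernel $G_t(x)=(4\pi t)^{-d/2}e^{-|x|^2/(4t)}$ and the symbol of the fractional heat kernel $H_t^s$, namely $\widehat{H_t^s}(\xi)=e^{-t|\xi|^{2s}}$. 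Thus $e^{-t\mathcal{L}}=e^{t\Delta}\circ e^{-t(-\Delta)^s}$.

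By the convolution theorem, a product of Fourier transforms is the Fourier transform of the convolution. Both $G_t$ and $H_t^s$ are nonnegative $L^1$ functions with unit mass; for $H_t^s$ this follows from Bochner's theorem, or from subordination, since $e^{-t|\xi|^{2s}}$ is positive definite for $s\in(0,1)$. Therefore $P_t^s:=G_t*H_t^s$ is a well-defined $L^1$ function with $\widehat{P_t^s}=e^{-t(|\xi|^2+|\xi|^{2s})}$, and writing out the convolution gives exactly
\[
P_t^s(x)=\frac{1}{(4\pi t)^{d/2}}\int_{\mathbb{R}^d}e^{-|x-\xi|^2/4t}H_t^s(\xi)\,d\xi.
\]
Fubini is justified by positivity. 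From this, $e^{-t\mathcal{L}}\phi=P_t^s*\phi$ for $\phi\in L^2$, and in fact for $\phi\in L^1\cap L^2$.

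It remains to pass to general $L^q$, $q\ge1$. Since $\|P_t^s\|_1=1$, Young's inequality gives
\[
\|P_t^s*\phi\|_q\le\|\phi\|_q .
\]
So the convolution formula defines a contraction on $L^q$ that agrees with the $L^2$ semigroup on the dense subspace $L^1\cap L^q$; for $q=\infty$ we simply take the convolution itself as the definition. The semigroup property follows from $\widehat{P_t^s}\widehat{P_\tau^s}=\widehat{P_{t+\tau}^s}$. Strong continuity on $L^q$ for $q<\infty$ follows from $P_t^s$ being an approximate identity: it is nonnegative with unit mass, and its mass concentrates at the origin as $t\to0$ because both factors do. Identifying the generator as $-\mathcal{L}$ on smooth functions is done again on the Fourier side.

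The main technical point is the positivity and integrability of $H_t^s$, which is needed to give the $L^q$ extension and Fubini a rigorous meaning. I would cite the standard fact, via Bochner subordination or Pólya-type arguments, that for $0<s<1$ the fractional heat kernel is a positive integrable density comparable to $t^{-d/(2s)}\wedge t|x|^{-d-2s}$.
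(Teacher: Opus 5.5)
Your argument is correct. The paper gives no proof of this lemma and only quotes it from the mixed local--nonlocal literature; your factorization $e^{-t(|\xi|^2+|\xi|^{2s})}=e^{-t|\xi|^2}e^{-t|\xi|^{2s}}$, which gives $P_t^s=G_t*H_t^s$ with both factors nonnegative unit-mass densities, is the standard way this representation is obtained.

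One small fix: when you say the $L^q$ convolution semigroup agrees with the $L^2$ one, use the dense subspace $L^2\cap L^q$ rather than $L^1\cap L^q$, because for $1\le q<2$ the latter is not contained in $L^2$.
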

We have the following $L^q-L^r$ estimate of the heat semigroup associated with the mixed local-nonlocal operator.
\begin{lemma}[$L^q-L^r$ estimate] \label{themlplq}
There exists a constant $C>0$ such that for any $1 \leq q \leq r \leq \infty$ and any 
$\varphi \in L^q(\mathbb{R}^d)$,
\begin{equation} \label{pqbound}
\Vert e^{-t \mathcal{L}} \varphi \Vert_{r} \leq C\, t_s^{-\frac{d}{2s}(\frac{1}{q}-\frac{1}{r})}\Vert \varphi \Vert_{q},\quad t>0,
\end{equation}
where $t_s:=\max\{t^s,t\}$.
\end{lemma}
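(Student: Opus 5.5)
The estimate \eqref{pqbound} follows by combining the convolution structure of $P^s_t$ with Young's inequality. It is then enough to bound $\|P^s_t\|_\rho$, where $1+\frac1r=\frac1\rho+\frac1q$, in two different ways and take the better of the two bounds.

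Since $P^s_t = G_t * H^s_t$, where $G_t(x)=(4\pi t)^{-d/2}e^{-|x|^2/4t}$ is the Gaussian kernel, and since both kernels are nonnegative probability densities ($\|G_t\|_1=\|H^s_t\|_1=1$), we get two estimates from Young's inequality:
\[
\|P^s_t\|_\rho \le \|G_t\|_\rho\,\|H^s_t\|_1=\|G_t\|_\rho
\qquad\text{and}\qquad
\|P^s_t\|_\rho \le \|G_t\|_1\,\|H^s_t\|_\rho=\|H^s_t\|_\rho .
\]
By scaling, $\|G_t\|_\rho = C t^{-\frac d2(1-\frac1\rho)}$. Likewise $H^s_t(x)=t^{-d/2s}H^s_1(t^{-1/2s}x)$, so $\|H^s_t\|_\rho = t^{-\frac{d}{2s}(1-\frac1\rho)}\|H^s_1\|_\rho$. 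Here $\|H^s_1\|_\rho<\infty$ for all $\rho\in[1,\infty]$, because $H^s_1$ is bounded and integrable; this uses the standard bound $H^s_1(x)\lesssim (1+|x|)^{-d-2s}$. Using $1-\frac1\rho=\frac1q-\frac1r=:\gamma\ge0$, Young's inequality then gives
\[
\|e^{-t\mathcal L}\varphi\|_r \le C\min\{t^{-\frac d2\gamma},\,t^{-\frac d{2s}\gamma}\}\,\|\varphi\|_q .
\]

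It remains to identify this minimum with $t_s^{-\frac d{2s}\gamma}$. Note $t^{-\frac d2\gamma}=(t)^{-\frac{d}{2s}\gamma\cdot s}$, which equals $(t^s)^{-\frac d{2s}\gamma}$. So the minimum equals $\big(\max\{t^s,t\}\big)^{-\frac d{2s}\gamma}=t_s^{-\frac d{2s}\gamma}$, because $x\mapsto x^{-\frac d{2s}\gamma}$ is nonincreasing. For $t\le1$ the Gaussian (local) bound is the better one, and for $t\ge1$ the fractional one is. The constant is $C=\max\{\|G_1\|_\rho,\|H^s_1\|_\rho\}$. It is uniform in $q,r$, since each of these norms is controlled by interpolating between the $L^1$ and $L^\infty$ norms of the corresponding kernel.

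The only nontrivial input, and the main point to check, is the integrability and boundedness of $H^s_1$ together with its scaling identity. Both are classical and can be quoted from the references on the fractional heat kernel. Everything else is Young's inequality and the exponent computation above.
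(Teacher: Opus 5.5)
Your proof is correct. The paper does not prove this lemma; it imports it from the literature (\cite{Biagi, Castillo3, KFA25}), so there is no in-paper argument to compare against. Your route is a clean, self-contained replacement: you write $P^s_t=G_t*H^s_t$, bound $\|P^s_t\|_\rho$ by both $\|G_t\|_\rho$ and $\|H^s_t\|_\rho$ via Young, and use the scaling of each kernel.

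Its advantage is that it makes the mechanism explicit. With $\alpha=\frac{d}{2s}(\frac1q-\frac1r)$, the identity $\min\{(t^s)^{-\alpha},t^{-\alpha}\}=t_s^{-\alpha}$ shows that the Gaussian factor controls $t\le 1$ and the stable factor controls $t\ge 1$. This is exactly the small/large-time transition the paper emphasizes. In addition, the bound $\|K\|_\rho\le\max\{\|K\|_1,\|K\|_\infty\}$ gives the uniformity of $C$ in $q,r$ that the statement asserts.

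A minor point: the tail estimate $H^s_1(x)\lesssim(1+|x|)^{-d-2s}$ is more than you need. Boundedness of $H^s_1$ already follows from $\widehat{H^s_1}(\xi)=e^{-|\xi|^{2s}}\in L^1$. The identity $\|H^s_t\|_1=1$ that you use rests on nonnegativity of $H^s_t$, which holds by Bochner's theorem or subordination since $s\in(0,1)$. Even without positivity, scaling gives $\|H^s_t\|_1=\|H^s_1\|_1$, so integrability of $H^s_1$ alone would suffice.
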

\section{Some technical results}\label{sec3}
This subsection aims to derive some important results that will be useful throughout the paper. Our first result concerns estimates for $e^{-t\mathcal{L}}$ on $\exp L^p(\mathbb{R}^d).$

\begin{proposition}\label{techprop1}${}$
Let $1\leq q\leq p$, $1\leq r\leq\infty$, and $0<s< 1$. Define $t_s:=\max\{t^s,t\}$. Then the heat semigroup $e^{-t \mathcal{L}}$ satisfies the following mapping properties:
\begin{enumerate}
\item $\left\|e^{-t\mathcal{L}} \varphi\right\|_{\exp L^p }\leq \left\|\varphi\right\|_{\exp L^p }$, \,\,\, for all $t>0,$\, $\varphi\in {\exp L^p(\mathbb{R}^d)}$.

\item $\left\|e^{-t\mathcal{L}} \varphi\right\|_{\exp L^p }\lesssim t_s^{-\frac{d}{2s q}}\left(\ln(t_s^{-\frac{d}{2s}}+1)\right)^{-1/p}\left\|\varphi\right\|_{q}$, \,\,\, for all $t>0,$\, $\varphi\in L^q(\mathbb{R}^d)$.

\item $\left\|e^{-t\mathcal{L}} \varphi\right\|_{\exp L^p}\lesssim \frac{1}{(\ln 2)^{1/p}}\left[t_s^{-\frac{d}{2s r}}\left\|\varphi\right\|_{r}+\left\|\varphi\right\|_{q}\right]$, \, for all $t>0,$\, $\varphi\in L^r(\mathbb{R}^d)\cap L^q(\mathbb{R}^d)$.
\end{enumerate}
\end{proposition}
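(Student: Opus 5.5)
For (1), I will use that $P^s_t\ge 0$ and $\int_{\mathbb{R}^d}P^s_t\,dx=1$. Positivity holds because $P^s_t$ is the convolution of the Gaussian with $H^s_t\ge0$. The unit mass holds because both factors are probability densities, or equivalently because the Fourier multiplier $e^{-t(|\xi|^2+|\xi|^{2s})}$ equals $1$ at $\xi=0$.

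Then Jensen's inequality, applied to the convex function $\tau\mapsto e^{\tau^p}-1$ and the probability measure $P^s_t(x-y)\,dy$, gives
\[
\exp\!\Big(\frac{|e^{-t\mathcal L}\varphi(x)|^p}{\lambda^p}\Big)-1\le \int P^s_t(x-y)\Big(\exp\!\Big(\frac{|\varphi(y)|^p}{\lambda^p}\Big)-1\Big)dy .
\]
Integrating in $x$ and using Fubini shows that any $\lambda$ admissible for $\varphi$ is admissible for $e^{-t\mathcal L}\varphi$. Taking the infimum over such $\lambda$ yields (1).

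For (2), I will expand the exponential in a series. For $\lambda>0$,
\[
\int\big(e^{|e^{-t\mathcal L}\varphi|^p/\lambda^p}-1\big)dx=\sum_{k\ge1}\frac{\|e^{-t\mathcal L}\varphi\|_{pk}^{pk}}{k!\,\lambda^{pk}}.
\]
Since $q\le p\le pk$, Lemma~\ref{themlplq} with $r=pk$ gives
\[
\|e^{-t\mathcal L}\varphi\|_{pk}^{pk}\le C^{pk}\,t_s^{-\frac{d}{2s}(\frac{pk}{q}-1)}\|\varphi\|_q^{pk}.
\]
I will track the $k$-dependence of the constant $C$ in Lemma~\ref{themlplq}. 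It is harmless: interpolating between $L^q\to L^q$ (constant $1$) and $L^q\to L^\infty$ (a fixed constant $C_\infty$), the constant is at most $C_\infty^{1-q/(pk)}\le \max(1,C_\infty)$.

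Writing $A:=t_s^{-d/(2s)}$ and $X:=C\,A^{1/q}\|\varphi\|_q/\lambda$, the sum is bounded by
\[
A^{-1}\sum_{k\ge1}\frac{X^{pk}}{k!}=A^{-1}\big(e^{X^p}-1\big).
\]
This is at most $1$ precisely when $X^p\le \ln(1+A)$. So $\lambda=C A^{1/q}\|\varphi\|_q\,(\ln(1+A))^{-1/p}$ is admissible, and this is exactly (2). The only delicate point is the uniformity of the constant in $k$ addressed above; it is the step I would check most carefully.

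For (3), I will use Lemma~\ref{lemma1}(1), namely \eqref{eq6} with $1\le q\le p$:
\[
\|e^{-t\mathcal L}\varphi\|_{\exp L^p}\le (\ln2)^{-1/p}\big(\|e^{-t\mathcal L}\varphi\|_q+\|e^{-t\mathcal L}\varphi\|_\infty\big).
\]
The first term is bounded by $\|\varphi\|_q$ by $L^q$-contractivity. The second is bounded by $C t_s^{-d/(2sr)}\|\varphi\|_r$ by Lemma~\ref{themlplq} from $L^r$ to $L^\infty$. Together these give (3). This step is routine.
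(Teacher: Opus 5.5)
Your proof is correct. Parts (2) and (3) follow the paper's argument: the same Taylor expansion and $L^q\to L^{pk}$ bound from \eqref{pqbound}, the same solution of $t_s^{d/(2s)}\bigl(e^{X^p}-1\bigr)\le 1$ for $\lambda$, and the same combination of \eqref{eq6}, $L^q$-contractivity and the $L^r\to L^\infty$ bound. You check that the constant is uniform in $k$; the paper instead takes this from the uniform constant asserted in Lemma~\ref{themlplq}. Your log-convexity bound $\|g\|_{pk}\le\|g\|_q^{q/(pk)}\|g\|_\infty^{1-q/(pk)}$ justifies it directly.

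The genuine difference is in (1). The paper expands $e^{|e^{-t\mathcal{L}}\varphi|^p/\lambda^p}-1$ in a power series and uses $\|e^{-t\mathcal{L}}\varphi\|_{pk}\le\|\varphi\|_{pk}$ term by term. You instead obtain the modular inequality $\int_{\mathbb{R}^d}\bigl(e^{|e^{-t\mathcal{L}}\varphi|^p/\lambda^p}-1\bigr)dx\le\int_{\mathbb{R}^d}\bigl(e^{|\varphi|^p/\lambda^p}-1\bigr)dx$ in one step from Jensen. This works either after $|e^{-t\mathcal{L}}\varphi|\le e^{-t\mathcal{L}}|\varphi|$ plus monotonicity, or directly, since $x\mapsto e^{|x|^p/\lambda^p}-1$ is convex on $\mathbb{R}$ for $p\ge1$.

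Both routes rest on the same two facts, $P^s_t\ge 0$ and $\int P^s_t=1$. The paper's termwise step needs the $L^{pk}$ bound with constant exactly $1$. The bound \eqref{pqbound}, with its generic $C$, does not literally give that; it comes from Young's inequality and unit mass. Your route makes this dependence explicit. It also applies unchanged to any convex Orlicz function, e.g. $\phi(\tau)=e^{\tau^p}-1-\tau^p$ from Lemma~\ref{lemma5}. The series route has the advantage of running in parallel with the proof of (2).
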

\proof 
First, we establish (1). By the definition of the Luxemburg norm in the Orlicz space $\exp L^{p}(\mathbb{R}^{d})$ we have 
\begin{equation} \label{luxheatnorm}
    \left\|e^{-t\mathcal{L}} \varphi\right\|_{\exp L^p}=\inf\left\{\lambda>0: \,\, \int_{\mathbb{R}^d}\left(\exp\left(\frac{|e^{-t\mathcal{L}} \varphi|^p}{\lambda^p}\right)-1\right)\,dx\leq 1\right\}.
\end{equation}
Now, we first estimate the integral in \eqref{luxheatnorm} by using \eqref{pqbound} and the Taylor formula as follows:
\begin{align*}
\int_{\mathbb{R}^d}\left(\exp\left(\frac{|e^{-t\mathcal{L}} \varphi|^p}{\lambda^p}\right)-1\right)\,dx &=\int_{\mathbb{R}^d}\sum_{k=1}^{\infty}\left(\frac{|e^{-t\mathcal{L}} \varphi|^p}{\lambda^p}\right)^{k}\frac{1}{k!}\,dx=\sum_{k=1}^{\infty}\frac{\left\|e^{-t\mathcal{L}}\varphi\right\|_{pk}^{pk}}{k!\lambda^{pk}}\\&\leq \sum_{k=1}^{\infty}\frac{\left\|
\varphi\right\|_{pk}^{pk}}{k!\lambda^{pk}}
=\sum_{k=1}^{\infty}\frac{1}{k!\lambda^{pk}}\int_{\mathbb{R}^d}|\varphi|^{pk}dx\\
&=\int_{\mathbb{R}^d}\left(\exp\left(\frac{| \varphi|^p}{\lambda^p}\right)-1\right)\,dx,
\end{align*}
for any $\lambda>0$. Therefore, it follows that
\begin{align*}
&\left\{\lambda>0:\,\,\int_{\mathbb{R}^d}\left(\exp\left(\frac{|\varphi|^p}{\lambda^p}\right)-1\right)\,dx\leq 1\right\} \\& \quad\quad\subseteq \left\{\lambda>0:\,\,\int_{\mathbb{R}^d}\left(\exp\left(\frac{|e^{-t\mathcal{L}} \varphi|^p}{\lambda^p}\right)-1\right)\,dx\leq 1\right\},
\end{align*}
which further implies, using \eqref{luxheatnorm}, that
$$
\left\|e^{-t\mathcal{L}} \varphi\right\|_{\exp L^p}\leq \inf\left\{\lambda>0:\,\,\int_{\mathbb{R}^d}\left(\exp\left(\frac{|\varphi|^p}{\lambda^p}\right)-1\right)\,dx\leq 1\right\}=\left\|\varphi\right\|_{\exp L^p}.
$$
Next, we prove (2).  We first notice, using \eqref{pqbound}  and the Taylor expansion, that
\begin{eqnarray*}
\int_{\mathbb{R}^d}\left(\exp\left(\frac{|e^{-t\mathcal{L}} \varphi|^p}{\lambda^p}\right)-1\right)\,dx&=&\sum_{k=1}^{\infty}\frac{\left\|e^{-t\mathcal{L}} \varphi\right\|_{pk}^{pk}}{k!\lambda^{pk}}\\
&\leq& \sum_{k=1}^{\infty}\frac{C^{pk}t_s^{-\frac{d}{2s}(\frac{1}{q}-\frac{1}{pk})pk}\left\|\varphi\right\|_{q}^{pk}}{k!\lambda^{pk}}\\
&=&t_s^{\frac{d}{2s}}\left(\exp\left(\frac{C t_s^{-\frac{d}{2s q}}\|\varphi\|_q}{\lambda}\right)^p-1\right),
\end{eqnarray*}
for any $\lambda>0$. Moreover, it is easy to see that
$$t_s^{\frac{d}{2s}}\left(\exp\left(\frac{C t_s^{-\frac{d}{2 s q}}\|\varphi\|_q}{\lambda}\right)^p-1\right)\leq 1\Longleftrightarrow \lambda\geq C\,t_s^{-\frac{d}{2s q}}\left(\ln( t_s^{-\frac{d}{2 s}}+1)\right)^{-1/p}\left\|\varphi\right\|_{q},$$
which shows that
\begin{eqnarray*}
&{}&\left\{\lambda>0 :\,\, \lambda\in \Big[ C\,t_s^{-\frac{d}{2 s q}}\left(\ln( t_s^{-\frac{d}{2 s}}+1)\right)^{-1/p}\left\|\varphi\right\|_{q}, \,\infty \Big)\right\}\\
&{}&\subseteq \left\{\lambda>0 :\,\, \int_{\mathbb{R}^d}\left(\exp\left(\frac{|e^{-t\mathcal{L}} \varphi|^p}{\lambda^p}\right)-1\right)\,dx\leq 1\right\}.
\end{eqnarray*}
Therefore, using the Luxembourg norm \eqref{luxheatnorm} and the above inclusion, we have
\begin{eqnarray*}
\left\|e^{-t\mathcal{L}} \varphi\right\|_{\exp L^p}&\leq&\inf \left\{\lambda>0 :\,\, \lambda\in \Big[ C\,t_s^{-\frac{d}{2s q}}\left(\ln( t_s^{-\frac{d}{2s}}+1)\right)^{-1/p}\left\|\varphi\right\|_{q}, \,\infty \Big)\right\}\\
&=&C\,t_s^{-\frac{d}{2s q}}\left(\ln( t_s^{-\frac{d}{2s}}+1)\right)^{-1/p}\left\|\varphi\right\|_{q}.
\end{eqnarray*}
Finally, we derive (3). Applying Lemma \ref{lemma1}-(1) and then using \eqref{pqbound}, we conclude that
\begin{eqnarray*}
\left\|e^{-t \mathcal{L}} \varphi\right\|_{\exp L^p}&\leq&\frac{1}{(\ln 2)^{1/p}}\left(\left\|e^{-t\mathcal{L}} \varphi\right\|_q+\left\|e^{-t\mathcal{L}} \varphi\right\|_\infty\right)\\
&\leq&\frac{1}{(\ln 2)^{1/p}}\left(\left\| \varphi\right\|_q+C\,t_s^{-\frac{d}{2s r}}\left\| \varphi\right\|_r\right).
\end{eqnarray*}
This completes the proof. \hfill$\square$\\

We also require the following smoothing result. Although its proof follows along the same lines as that of \cite[Proposition~3.7]{Majdoub1}, we provide a detailed argument here for the sake of completeness.

\begin{proposition}\label{prop2}
For any $\varphi\in \exp L^p_0(\mathbb{R}^d)$, the map $t\mapsto e^{-t\mathcal{L}}\varphi$ belongs to $C([0,\infty),\exp L^p_0(\mathbb{R}^d)).$
\end{proposition}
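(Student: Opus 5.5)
We will follow the density argument of \cite[Proposition~3.7]{Majdoub1}, proving two things: that $e^{-t\mathcal{L}}\varphi$ stays in $\exp L^p_0(\mathbb{R}^d)$ for every $t\ge0$, and that $t\mapsto e^{-t\mathcal{L}}\varphi$ is continuous into $\exp L^p$. The key tools are the contraction property Proposition~\ref{techprop1}-(1), the embedding Lemma~\ref{lemma1}-(1), and the $L^q$--$L^r$ bound \eqref{pqbound}.

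\emph{Step 1 (invariance).} Fix $\varphi\in\exp L^p_0$ and $\varepsilon>0$, and choose $\psi\in C_0^\infty(\mathbb{R}^d)$ with $\|\varphi-\psi\|_{\exp L^p}<\varepsilon$. Since $\psi\in L^1\cap L^\infty$, \eqref{pqbound} with $r=q$ shows that $e^{-t\mathcal{L}}\psi\in L^1\cap L^\infty$. Lemma~\ref{lemma1}-(1) then gives $e^{-t\mathcal{L}}\psi\in\exp L^p_0$. By Proposition~\ref{techprop1}-(1),
\[
\|e^{-t\mathcal{L}}\varphi-e^{-t\mathcal{L}}\psi\|_{\exp L^p}\le\varepsilon .
\]
Because $\exp L^p_0$ is closed in $\exp L^p$, we conclude $e^{-t\mathcal{L}}\varphi\in\exp L^p_0$.

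\emph{Step 2 (continuity).} Since $e^{-t\mathcal{L}}$ is a semigroup, for $t_0\ge0$ and $h>0$ we have
\[
e^{-(t_0+h)\mathcal{L}}\varphi-e^{-t_0\mathcal{L}}\varphi=(e^{-h\mathcal{L}}-I)e^{-t_0\mathcal{L}}\varphi .
\]
By Step 1, $e^{-t_0\mathcal{L}}\varphi\in\exp L^p_0$. Left continuity at $t_0>0$ reduces to the same statement, because
\[
e^{-(t_0-h)\mathcal{L}}\varphi-e^{-t_0\mathcal{L}}\varphi=(I-e^{-h\mathcal{L}})e^{-(t_0-h)\mathcal{L}}\varphi .
\]
Using Proposition~\ref{techprop1}-(1) again, the last expression equals $(I-e^{-h\mathcal{L}})e^{-(t_0/2)\mathcal{L}}\varphi$ up to an error bounded uniformly in $h$ by a density approximation. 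It therefore suffices to show
\[
\lim_{h\to0^+}\|e^{-h\mathcal{L}}\varphi-\varphi\|_{\exp L^p}=0\qquad\text{for all }\varphi\in\exp L^p_0 .
\]
With $\psi$ as in Step 1, the triangle inequality and Proposition~\ref{techprop1}-(1) give
\[
\|e^{-h\mathcal{L}}\varphi-\varphi\|_{\exp L^p}\le 2\varepsilon+\|e^{-h\mathcal{L}}\psi-\psi\|_{\exp L^p}.
\]
By \eqref{eq6} with $q=1$, it is enough to prove
\[
\|e^{-h\mathcal{L}}\psi-\psi\|_1+\|e^{-h\mathcal{L}}\psi-\psi\|_\infty\to0 .
\]

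\emph{Step 3 (main obstacle: strong continuity on smooth data).} The $L^1$ convergence follows because $P^s_h$ is a probability density and an approximate identity. It is the convolution of the Gaussian with $H^s_h$, both nonnegative with mass one. Concentration as $h\to0$ follows from the Fourier symbol $e^{-h(|\xi|^2+|\xi|^{2s})}\to1$, or directly from the convolution structure, and strong continuity of translations in $L^1$ then gives the claim. For $L^\infty$, one option is uniform continuity of $\psi$ combined with tightness of $P^s_h$, meaning $\int_{|y|>\delta}P^s_h\to0$. A cleaner route is
\[
e^{-h\mathcal{L}}\psi-\psi=-\int_0^h e^{-\tau\mathcal{L}}\mathcal{L}\psi\,d\tau ,
\]
which gives $\|e^{-h\mathcal{L}}\psi-\psi\|_\infty\le h\|\mathcal{L}\psi\|_\infty$. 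Here $\mathcal{L}\psi$ is bounded for $\psi\in C_0^\infty$, and the $L^\infty$-contractivity comes from \eqref{pqbound}. The same identity with $\|\mathcal{L}\psi\|_1<\infty$ handles $L^1$ as well. That $\mathcal{L}\psi\in L^1$ holds because $(-\Delta)^s\psi$ decays like $|x|^{-d-2s}$. Letting $h\to0$ and then $\varepsilon\to0$ finishes the proof.
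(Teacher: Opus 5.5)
Your proposal is correct and follows essentially the same route as the paper: invariance of $\exp L^p_0(\mathbb{R}^d)$ via the contraction property of Proposition~\ref{techprop1}-(1) plus density, reduction to continuity at $t=0$, and approximation by $\psi\in C_0^\infty(\mathbb{R}^d)$ with \eqref{eq6}, reducing to $L^q$ and $L^\infty$ convergence for smooth data. You use $q=1$ where the paper uses $q=p$, and you justify $\|e^{-h\mathcal{L}}\psi-\psi\|_\infty\to0$ directly instead of citing strong continuity on $L^\infty$; only your left-continuity sentence needs fixing, by writing $(I-e^{-h\mathcal{L}})e^{-(t_0-h)\mathcal{L}}\varphi=e^{-(t_0-h)\mathcal{L}}(I-e^{-h\mathcal{L}})\varphi$ and using contractivity to bound its norm by $\|e^{-h\mathcal{L}}\varphi-\varphi\|_{\exp L^p}$.
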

\proof 
   Let $\varphi\in \exp L^p_0(\mathbb{R}^d)$. By Proposition \ref{techprop1}-(1) and the definition of $\exp L^p_0(\mathbb{R}^d)$, it follows that $e^{-t\mathcal{L}}\varphi\in \exp L^p_0(\mathbb{R}^d)$ for all $t>0$. By the linearity of the semigroup $e^{-t\mathcal{L}}$, it is sufficient to prove the continuity at $t=0$, that is,
$$
\lim_{t\rightarrow 0}\left\|e^{-t\mathcal{L}} \varphi-\varphi\right\|_{\exp L^p}=0.
$$ Since $\varphi\in \exp L^p_0(\mathbb{R}^d)$, there exists a sequence $(\varphi_n)_n\subset C^\infty_0(\mathbb{R}^d)$ such that
$$
\lim_{n\rightarrow \infty}\left\|\varphi_n-\varphi\right\|_{\exp L^p}=0.
$$
By virtue of (\ref{eq6}) and Proposition \ref{techprop1}-(1), we infer that
\begin{eqnarray*}
&{}&\left\|e^{-t\mathcal{L}} \varphi-\varphi\right\|_{\exp L^p}\\
&{}&\leq\left\|e^{-t\mathcal{L}} (\varphi-\varphi_n)\right\|_{\exp L^p}+\left\|e^{-t\mathcal{L}} \varphi_n-\varphi_n\right\|_{\exp L^p}+\left\|\varphi_n-\varphi\right\|_{\exp L^p}\\
&{}&\leq\frac{1}{(\ln 2)^{1/p}}\left(\|e^{-t\mathcal{L}} \varphi_n-\varphi_n\|_p+\|e^{-t\mathcal{L}} \varphi_n-\varphi_n\|_\infty\right)
+2\left\|\varphi_n-\varphi\right\|_{\exp L^p}.
\end{eqnarray*}
Since $\varphi_n\in C^\infty_0(\mathbb{R}^d)$ and the semigroup $e^{-t\mathcal{L}}$ is  strongly continuous on $L^r(\mathbb{R}^d)$ for $1<r\leq\infty$, we obtain
$$
\lim_{t\rightarrow 0}\left(\|e^{-t\mathcal{L}} \varphi_n-\varphi_n\|_p+\|e^{-t\mathcal{L}} \varphi_n-\varphi_n\|_\infty\right)=0.
$$
Therefore,
$$
\limsup_{t\rightarrow 0}\left\|e^{-t\mathcal{L}} \varphi-\varphi\right\|_{\exp L^p}
\leq 2\left\|\varphi_n-\varphi\right\|_{\exp L^p},
$$
for every $n\in\mathbb{N}$. This completes the proof of the proposition. 
\hfill$\square$\\

Since $e^{-t\mathcal{L}}$ is a $C^0$-semigroup on $L^p(\mathbb{R}^d),$ using Proposition \ref{prop2} we obtain the following result.
\begin{corollary}The semigroup $\{e^{-t\mathcal{L}}\}_{t\geq0}$ is a $C^0$-semigroup on $\exp L^p_0(\mathbb{R}^d).$
    \end{corollary}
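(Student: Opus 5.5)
We have to prove the final statement: $\{e^{-t\mathcal{L}}\}_{t\ge0}$ is a $C^0$-semigroup on $\exp L^p_0(\mathbb{R}^d)$. Three things must be checked: each $e^{-t\mathcal{L}}$ maps $\exp L^p_0$ boundedly into itself; the semigroup law $e^{-(t+\tau)\mathcal{L}}=e^{-t\mathcal{L}}e^{-\tau\mathcal{L}}$ with $e^{-0\mathcal{L}}=I$; and strong continuity at $t=0$ in the $\exp L^p$ norm.

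\textbf{Invariance and boundedness.} Proposition~\ref{techprop1}-(1) gives $\|e^{-t\mathcal{L}}\varphi\|_{\exp L^p}\le\|\varphi\|_{\exp L^p}$, so each operator is a contraction on $\exp L^p$. To see that $\exp L^p_0$ is invariant, take $\varphi\in\exp L^p_0$ and $\varphi_n\in C_0^\infty$ with $\varphi_n\to\varphi$ in $\exp L^p$. Then $e^{-t\mathcal{L}}\varphi_n\in L^p\cap L^\infty$ by \eqref{pqbound}, so \eqref{eq6} places it in $\exp L^p_0$. By the contraction property, $e^{-t\mathcal{L}}\varphi_n\to e^{-t\mathcal{L}}\varphi$ in $\exp L^p$. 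Since $\exp L^p_0$ is closed (it is the closure of $C_0^\infty$), the limit $e^{-t\mathcal{L}}\varphi$ lies in $\exp L^p_0$.

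\textbf{Semigroup law.} This is inherited from the convolution representation \eqref{heatsemimix}. Using Fourier transforms, $\widehat{P^s_t}(\xi)=e^{-t(|\xi|^2+|\xi|^{2s})}$, so $P^s_t*P^s_\tau=P^s_{t+\tau}$ and $P_t^s\in L^1$ with unit mass. By Fubini, the identity $e^{-(t+\tau)\mathcal{L}}\varphi=e^{-t\mathcal{L}}e^{-\tau\mathcal{L}}\varphi$ then holds pointwise a.e. for every $\varphi\in\exp L^p\subset L^p$ (using Lemma~\ref{lemma1}-(3)). Alternatively, one can simply note that the law holds on $L^p$, and $\exp L^p_0\subset L^p$.

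\textbf{Strong continuity.} This is exactly Proposition~\ref{prop2}, which gives $\|e^{-t\mathcal{L}}\varphi-\varphi\|_{\exp L^p}\to0$ as $t\to0$. Continuity at every $t_0>0$ then follows from the semigroup law together with the uniform bound in Proposition~\ref{techprop1}-(1).

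The only mildly delicate point is the invariance of $\exp L^p_0$. Proposition~\ref{prop2}'s proof asserts this somewhat quickly; the closedness-plus-contraction argument above settles it. Everything else is immediate from earlier results.
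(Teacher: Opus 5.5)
Your proposal is correct and follows essentially the same route as the paper, which obtains the corollary by combining the semigroup law inherited from $L^p(\mathbb{R}^d)$ with Proposition~\ref{prop2} (invariance of $\exp L^p_0(\mathbb{R}^d)$ plus continuity at $t=0$). Your closedness-plus-contraction argument for invariance just spells out the step the paper handles in one line via Proposition~\ref{techprop1}-(1) and the definition of $\exp L^p_0(\mathbb{R}^d)$.
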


We state the following result (see \cite[Lemma 4.1.5]{CH}), adapted to the mixed local-nonlocal heat semigroup. 
\begin{lemma}\label{lemma10}
Let $X$ be a Banach space and $ g\in L^1(0,T;X)$, then $$\displaystyle \int_0^te^{-(t-\tau)\mathcal{L}}g(\tau)\,d\tau\in C([0,T],X).$$ Moreover, we have
$$\left\|\int_0^te^{-(t-\tau)\mathcal{L}}g(\tau)\,d\tau\right\|_{L^\infty(0,T;X)}\leq \|g\|_{L^1(0,T;X)}.$$
\end{lemma}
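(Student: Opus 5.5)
The statement to prove is Lemma~\ref{lemma10}: for a Banach space $X$ and $g\in L^1(0,T;X)$, the Duhamel integral $\Phi(t):=\int_0^t e^{-(t-\tau)\mathcal{L}}g(\tau)\,d\tau$ belongs to $C([0,T],X)$, with the stated $L^\infty$ bound. The argument should use only two properties of $\{e^{-t\mathcal{L}}\}_{t\ge0}$ on $X$. The first is that it is a contraction semigroup, $\|e^{-t\mathcal{L}}\|_{X\to X}\le 1$. For $X=L^q$ this is \eqref{pqbound} with $q=r$ (the constant is $1$ because $P_t^s\ge0$ and $\int P_t^s=1$). For $X=\exp L^p$ it is Proposition~\ref{techprop1}-(1). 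The second is that it is strongly continuous on $X$; this is the $C^0$ property on $L^q$ and on $\exp L^p_0$ (the Corollary after Proposition~\ref{prop2}). I would state the lemma for such $X$, following \cite[Lemma 4.1.5]{CH}.

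\textbf{The bound.} Measurability of $\tau\mapsto e^{-(t-\tau)\mathcal{L}}g(\tau)$ holds because $g$ is a limit of simple functions and the semigroup is strongly continuous. The Bochner integral is then well defined, and
\[
\|\Phi(t)\|_X\le\int_0^t\|e^{-(t-\tau)\mathcal{L}}\|\,\|g(\tau)\|_X\,d\tau\le\|g\|_{L^1(0,T;X)}.
\]
Taking the supremum over $t\in[0,T]$ gives the bound.

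\textbf{Continuity.} I would first treat step functions $g=\sum_j \mathbf 1_{(a_j,b_j)}x_j$. For these, $\Phi(t)=\sum_j\int_{\max(a_j,t)\wedge t}^{\min(b_j,t)}e^{-(t-\tau)\mathcal{L}}x_j\,d\tau$. After the change of variables $\sigma=t-\tau$, each piece becomes $\int_{I_j(t)}e^{-\sigma\mathcal{L}}x_j\,d\sigma$, where the interval $I_j(t)$ has endpoints depending continuously on $t$. The integrand $\sigma\mapsto e^{-\sigma\mathcal{L}}x_j$ is continuous and bounded by $\|x_j\|$, so this piece is continuous in $t$. For general $g$, I would take step functions $g_n\to g$ in $L^1(0,T;X)$, which is possible since they are dense. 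Linearity together with the bound already proved gives $\sup_t\|\Phi_g(t)-\Phi_{g_n}(t)\|_X\le\|g-g_n\|_{L^1(0,T;X)}\to0$. A uniform limit of continuous functions is continuous, so $\Phi_g\in C([0,T],X)$.

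\textbf{Main obstacle.} The delicate point is strong continuity, which is needed both for measurability and for the step-function case. It fails on the full space $\exp L^p$, so the continuity assertion should be applied only with $X=\exp L^p_0$ or $X=L^q$ ($q<\infty$). On $\exp L^p$ itself, only the norm bound survives, and there the integral should be understood in the weak-$*$ sense.
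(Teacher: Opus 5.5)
Your argument is correct. It is the standard proof of Cazenave--Haraux's Lemma~4.1.5 (contraction bound, then density of step functions in $L^1(0,T;X)$ plus uniform convergence), which the paper simply quotes without proof. The only slip is the lower limit in your step-function formula: it should be $a_j\wedge t$, not $\max(a_j,t)\wedge t$, which is identically $t$.

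You are also right that the statement tacitly requires $e^{-t\mathcal{L}}$ to be a $C^0$ contraction semigroup on $X$. This holds for $X=\exp L^p_0(\mathbb{R}^d)$ by Proposition~\ref{techprop1}-(1) and Proposition~\ref{prop2}, and that is exactly the setting in which the paper uses the lemma (Lemmas~\ref{lemma7} and~\ref{lemma8}).
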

The following lemmas are essential for the proof of the global existence (Theorem \ref{theo2}).
\begin{lemma}${}$\label{lemma6}\cite[Lemma~2.6]{Majdoub2}
Let $\lambda>0$, $1\leq q<\infty$ and $K>0$ be such that $\lambda q K^p\leq 1$. Assume that $u\in\exp L^p(\mathbb{R}^d)$ satisfies
$\|u\|_{\exp L^p}\leq K,$
then $$\exp\left(\frac{|u|^p}{\lambda^p}\right)-1\in L^q(\mathbb{R}^d)\quad \text{and}\quad
\left\| e^{\lambda|u|^p}-1\right\|_{q}\leq \left(\lambda q K^p\right)^{1/q}.$$
\end{lemma}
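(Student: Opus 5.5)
The plan is a direct pointwise estimate followed by integration, using only the definition of the Luxemburg norm. I read the conclusion as a bound on $e^{\lambda|u|^p}-1$; the expression $\exp(|u|^p/\lambda^p)$ in the first displayed membership should be read with $\lambda$ as a multiplier, consistently with the norm bound and with the hypothesis $\lambda qK^p\le 1$.

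\emph{Step 1: remove the power $q$.} For $q\ge 1$ and $a\ge 1$ we have $(a-1)^q\le a^q-1$. This follows from superadditivity of $t\mapsto t^q$ on $[0,\infty)$, namely $(a-1)^q+1^q\le a^q$. Applying it with $a=e^{\lambda|u|^p}$ gives, pointwise,
\[
\bigl(e^{\lambda|u|^p}-1\bigr)^q\le e^{\lambda q|u|^p}-1 .
\]

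\emph{Step 2: pull out the small parameter.} Set $\theta:=\lambda qK^p\in(0,1]$. Write $\lambda q|u|^p=\theta\,|u|^p/K^p$. The function $y\mapsto e^{y}-1$ is convex and vanishes at $0$. Hence for every $\theta\in[0,1]$ and $y\ge 0$,
\[
e^{\theta y}-1\le \theta\,(e^{y}-1).
\]
Therefore, pointwise,
\[
e^{\lambda q|u|^p}-1\le \theta\Bigl(\exp\bigl(|u|^p/K^p\bigr)-1\Bigr).
\]

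\emph{Step 3: integrate and use the norm.} Since $\|u\|_{\exp L^p}\le K$, the basic property recorded after the definition of Orlicz spaces applies: the admissible set of the Luxemburg norm is $[\|u\|_{\exp L^p},\infty)$, so $K$ is admissible. Thus
\[
\int_{\mathbb{R}^d}\bigl(\exp(|u|^p/K^p)-1\bigr)\,dx\le 1.
\]
Combining Steps 1 and 2 gives
\[
\bigl\|e^{\lambda|u|^p}-1\bigr\|_q^q\le \theta=\lambda qK^p .
\]
This proves both the membership in $L^q(\mathbb{R}^d)$ and, after taking $q$-th roots, the stated bound.

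There is no real obstacle; the only points requiring care are the two elementary inequalities in Steps 1 and 2, and the fact that $\theta\le 1$ is exactly what makes the convexity bound in Step 2 available.
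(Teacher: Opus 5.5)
Your proof is correct and follows essentially the standard argument behind the cited result \cite[Lemma~2.6]{Majdoub2}, which the paper quotes without proof. That argument has three steps: the pointwise bound $(e^{\lambda|u|^p}-1)^q\le e^{\lambda q|u|^p}-1$, then the inequality $e^{\theta y}-1\le\theta(e^{y}-1)$ with $\theta=\lambda qK^p\le 1$, then admissibility of $K$ in the Luxemburg norm. Your reading of the membership claim as $e^{\lambda|u|^p}-1\in L^q(\mathbb{R}^d)$ is the intended one; the expression $\exp(|u|^p/\lambda^p)-1$ in the statement is a typo.
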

\begin{lemma}${}$\label{lemma4}
Let $p>1$, $s 
\in (0, 1)$ be such that $s<d(p-1)/(2 p)$. Then, for every $r>\max\{d/2,1\}$, there exists $C=C(d,p,\tau,r)$ such that
 $$\left\|\int_0^te^{-(t-\tau)\mathcal{L}} g(\tau)\,d\tau\right\|_{L^\infty(0,\infty;\, \exp L^p)}\leq C\|g\|_{L^\infty(0,\infty;\,L^1\cap L^r)},$$ 
for every $g\in L^\infty(0,\infty;\, L^1(\mathbb{R}^d)\cap L^r(\mathbb{R}^d))$.
\end{lemma}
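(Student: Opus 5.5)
The approach is to fix $t>0$, bound the $\exp L^p$ norm of the Duhamel integral by the integral of the norms (Minkowski's inequality for the Luxemburg norm), and split the time integral at distance $1$ from $t$. On the short-time piece we use the Lebesgue-type smoothing of Proposition~\ref{techprop1}-(3). On the long-time piece we use the logarithmically improved estimate of Proposition~\ref{techprop1}-(2), whose extra logarithmic gain is exactly what the hypothesis $s<d(p-1)/(2p)$ needs.

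Set $\sigma=t-\tau$ and $M:=\|g\|_{L^\infty(0,\infty;L^1\cap L^r)}$. Then
\[
\Big\|\int_0^t e^{-(t-\tau)\mathcal{L}}g(\tau)\,d\tau\Big\|_{\exp L^p}\le \int_0^{\min\{t,1\}}\|e^{-\sigma\mathcal{L}}g(t-\sigma)\|_{\exp L^p}\,d\sigma+\int_{1}^{\max\{t,1\}}\|e^{-\sigma\mathcal{L}}g(t-\sigma)\|_{\exp L^p}\,d\sigma ,
\]
where the second integral is absent when $t\le 1$.

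\textbf{Short-time piece ($0<\sigma\le 1$).} Here $\sigma_s=\max\{\sigma^s,\sigma\}=\sigma^s$. Apply Proposition~\ref{techprop1}-(3) with $q=1\le p$ and the given $r$:
\[
\|e^{-\sigma\mathcal{L}}g\|_{\exp L^p}\lesssim \sigma_s^{-\frac{d}{2sr}}\|g\|_r+\|g\|_1=\sigma^{-\frac{d}{2r}}\|g\|_r+\|g\|_1 .
\]
Since $r>d/2$, the weight $\sigma^{-d/(2r)}$ is integrable on $(0,1)$. Hence this piece is bounded by $C(d,r)M$, uniformly in $t$.

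\textbf{Long-time piece ($\sigma> 1$).} Here $\sigma_s=\sigma$. The bound in (3) would leave a non-decaying $\|g\|_1$ term, which is useless over an unbounded interval, so we switch to Proposition~\ref{techprop1}-(2) with $q=1$:
\[
\|e^{-\sigma\mathcal{L}}g\|_{\exp L^p}\lesssim \sigma^{-\frac{d}{2s}}\Big(\ln\big(1+\sigma^{-\frac{d}{2s}}\big)\Big)^{-1/p}\|g\|_1 .
\]
For $\sigma\ge1$ we have $x:=\sigma^{-d/(2s)}\in(0,1]$, and on this range $\ln(1+x)\ge x\ln 2$. The right-hand side is therefore bounded by $C\sigma^{-\frac{d}{2s}(1-\frac1p)}\|g\|_1=C\sigma^{-\frac{d(p-1)}{2sp}}\|g\|_1$. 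This weight is integrable on $(1,\infty)$ precisely when $\frac{d(p-1)}{2sp}>1$, i.e. $s<\frac{d(p-1)}{2p}$, which is our hypothesis. So this piece is also bounded by $CM$, uniformly in $t$.

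Adding the two pieces and taking the supremum over $t>0$ gives the claim, with $C$ depending on $d,p,s,r$.

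\textbf{Main obstacle.} The delicate point is the large-time regime. The estimate must use the logarithmic factor in Proposition~\ref{techprop1}-(2), since the plain $L^1\to L^\infty$ decay combined with the embedding of Lemma~\ref{lemma1}-(1) does not yield integrable decay. One must also check that $\ln(1+x)\gtrsim x$ holds uniformly on the relevant range, so that the exponent comes out exactly as $d(p-1)/(2sp)$.

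A minor technical point is justifying Minkowski's inequality for the $\exp L^p$-valued integral. I would handle it by testing against the predual $L^1(\ln L)^{1/p}$, or by applying the argument of Lemma~\ref{lemma10} with $X=\exp L^p$.
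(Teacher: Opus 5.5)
Your proof is correct and follows the paper's argument. The paper likewise combines Proposition~\ref{techprop1}-(2) and -(3) with $q=1$, takes the minimum $\kappa(t)$ of the two resulting kernels, and asserts $\kappa\in L^1(0,\infty)$. Your split at $\sigma=1$, together with $\ln(1+x)\ge x\ln 2$ on $(0,1]$, is the explicit verification of that integrability, which the paper leaves unchecked.
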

\proof By Proposition \ref{techprop1}, we have
\begin{equation}\label{eq10}
\left\|e^{-t\mathcal{L}} \varphi\right\|_{\exp L^p}\leq C\,t_s^{-\frac{d}{2 s}}\left(\ln( t_s^{-\frac{d}{2 s}}+1)\right)^{-1/p}\left\|\varphi\right\|_{1},
\end{equation}
and
\begin{equation}\label{eq11}
\left\|e^{-t\mathcal{L}} \varphi\right\|_{\exp L^p}\leq C\,(t_s^{-\frac{d}{2 s r}}+1)\left[\left\|\varphi\right\|_{r}+\left\|\varphi\right\|_{1}\right].
\end{equation}
for all $t>0$, $\varphi\in L^1(\mathbb{R}^d)\cap L^r(\mathbb{R}^d)$, where  $t_s:=\max\{t^s,t\}$. Combining \eqref{eq10} and \eqref{eq11}, we get
$$\left\|e^{-t\mathcal{L}} \varphi\right\|_{\exp L^p}\lesssim \kappa(t)\left[\left\|\varphi\right\|_{r}+\left\|\varphi\right\|_{1}\right],$$
where 
$$\kappa(t):=\min\left\{(t_s^{-\frac{d}{2 s r}}+1),t_s^{-\frac{d}{2 s }}\left(\ln( t_s^{-\frac{d}{2 s}}+1)\right)^{-1/p}\right\}.$$
Due to the assumptions $s<d(p-1)/(2 p)$ and $r>d/2$, we see that $\kappa\in L^1(0,\infty)$. Thus, for $g\in L^\infty(0,\infty;L^1(\mathbb{R}^d)\cap L^r(\mathbb{R}^d))$, we have
\begin{eqnarray*}
\left\|\int_0^te^{-(t-\tau)\mathcal{L}} g(\tau)\,d\tau\right\|_{\exp L^p}&\leq& \int_0^t\left\|e^{-(t-\tau)\mathcal{L}} g(\tau)\right\|_{\exp L^p}\,d\tau\\
&\leq& \int_0^t\kappa(t-\tau)\left(\left\|g(\tau)\right\|_{1}+\left\|g(\tau)\right\|_{r}\right)\,d\tau\\
&\leq&\|g\|_{L^\infty(0,\infty;L^1\cap L^r)} \int_0^\infty\kappa(\tau)\,d\tau,
\end{eqnarray*}
for every $t>0$. This proves Lemma \ref{lemma4}. \hfill$\square$\\

We note that $d(p-1)/(2 p)$ may not be included in $(0,1)$. So, if $d(p-1)/(2 p)>1$, we have $d(p-1)/(2 p)>s$, and this case is recovered by Lemma \ref{lemma4}. If $d(p-1)/(2 p)\leq1$, we have three cases to study: the case of $s<d(p-1)/(2 p)$ is done according to Lemma \ref{lemma4}, and the case $s >d(p-1)/(2 p)$ can be done separately without using any kind of an a priori estimate, so it remains to study the case of $s=d(p-1)/(2 p)$ where we have a similar result as in Lemma \ref{lemma4}.  For this, we need to introduce an appropriate Orlicz space. Let $L^\phi(\mathbb{R}^d)$ be this space, with $\phi(u)=e^{|u|^p}-1-|u|^p$, associated with its Luxemburg norm. From the definition of $\|\cdotp\|_{L^\phi}$, (\ref{eq8}), and the standard inequality $e^{\theta \tau}-1\leq\theta(e^\tau-1)$, $0\leq\theta\leq1$, $\tau\geq 0$, we can easily get
\begin{equation}\label{eq0}
 \|u\|_{L^p}+\|u\|_{L^\phi}  \approx  \|u\|_{\exp L^p}.
\end{equation}
Indeed, we have the following result.
\begin{lemma}${}$\label{lemma5}
Let $p>1$ and $s\in(0,1)$ be such that $s=d(p-1)/(2 p)$. Then there exists a constant $C=C(d,p)$ such that
 $$\left\|\int_0^te^{-(t-\tau)\mathcal{L}} g(\tau)\,d\tau\right\|_{L^\infty(0,\infty;L^\phi)}\leq C\|g\|_{L^\infty(0,\infty;L^1\cap L^{2p}\cap L^{2p'})},$$ 
for every $g\in L^\infty(0,\infty;L^1(\mathbb{R}^d)\cap L^{2p}(\mathbb{R}^d)\cap L^{2p'}(\mathbb{R}^d))$, where $p'=p/(p-1)$.
\end{lemma}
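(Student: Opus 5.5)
Following the proof of Lemma~\ref{lemma4}, I will first derive a pointwise-in-time bound $\|e^{-t\mathcal{L}}\varphi\|_{L^\phi}\lesssim \kappa(t)\,\|\varphi\|_{1\cap 2p\cap 2p'}$ with $\kappa\in L^1(0,\infty)$. I will then integrate in $\tau$ exactly as in Lemma~\ref{lemma4}, using Minkowski's inequality in the Banach space $L^\phi$. The new feature is that $\phi(\tau)=e^{\tau^p}-1-\tau^p$ has no $k=1$ term in its Taylor expansion. Because of this, the large-time decay is not limited by the $L^p$ part, which is exactly what fails at the borderline $s=d(p-1)/(2p)$.

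\emph{Small times $0<t\le1$.} I will use Lemma~\ref{lemma1}-(2) with $q=2p\le 2p$, which gives $\|e^{-t\mathcal{L}}\varphi\|_{L^\phi}\lesssim \|e^{-t\mathcal{L}}\varphi\|_{2p}+\|e^{-t\mathcal{L}}\varphi\|_\infty$. By \eqref{pqbound}, the first term is at most $\|\varphi\|_{2p}$. For the second I will take $r=\max\{2p,2p'\}$, so that $\|e^{-t\mathcal{L}}\varphi\|_\infty\lesssim t_s^{-d/(2sr)}\|\varphi\|_r$. To have this integrable near $0$, where $t_s=t$, I need $d/(2r)<1$, i.e.\ $r>d/2$. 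The hypothesis $s=d(p-1)/(2p)<1$ gives $d<2p'$, so $r\ge 2p'>d>d/2$. Alternatively, one can interpolate with $\|\varphi\|_1$ to get any $r$ in $[1,\max\{2p,2p'\}]$. This is where the $L^{2p'}$ norm enters.

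\emph{Large times $t\ge1$.} Here I will mimic Proposition~\ref{techprop1}-(2) with the series starting at $k=2$:
\[
\int_{\mathbb{R}^d}\phi\Big(\frac{|e^{-t\mathcal{L}}\varphi|}{\lambda}\Big)dx=\sum_{k\ge2}\frac{\|e^{-t\mathcal{L}}\varphi\|_{pk}^{pk}}{k!\lambda^{pk}}\le t_s^{\frac{d}{2s}}\Big(e^{a^p}-1-a^p\Big),\qquad a=\frac{C t_s^{-\frac{d}{2s}}\|\varphi\|_1}{\lambda}.
\]
Since $e^{a^p}-1-a^p\lesssim a^{2p}$ for $a\le1$, the right side is $\le1$ as soon as $\lambda\gtrsim t_s^{-\frac{d}{2s}+\frac{d}{4sp}}\|\varphi\|_1$, giving $\|e^{-t\mathcal{L}}\varphi\|_{L^\phi}\lesssim t^{-\frac{d}{2s}(1-\frac{1}{2p})}\|\varphi\|_1$ for $t\ge1$ (where $t_s=t$). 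This is integrable at infinity iff $\frac{d}{2s}\cdot\frac{2p-1}{2p}>1$, i.e.\ $s<\frac{d(2p-1)}{2p}$. That holds because $s=\frac{d(p-1)}{2p}<\frac{d(2p-1)}{2p}$. For comparison, the $k=1$ term would only have given the exponent $\frac{d}{2s}(1-\frac1p)=1$, which is borderline non-integrable, and this is why the result needs $L^\phi$ rather than $\exp L^p$.

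Finally I set $\kappa(t)=t_s^{-d/(2sr)}+1$ on $(0,1]$ and $\kappa(t)=t^{-\frac{d}{2s}\frac{2p-1}{2p}}$ on $[1,\infty)$, so $\kappa\in L^1(0,\infty)$. Then
\[
\Big\|\int_0^t e^{-(t-\tau)\mathcal{L}}g\,d\tau\Big\|_{L^\phi}\le\int_0^\infty\kappa\;\|g\|_{L^\infty(L^1\cap L^{2p}\cap L^{2p'})}.
\]
The main obstacle is making the large-time Taylor-series estimate rigorous. The bound $e^{a^p}-1-a^p\lesssim a^{2p}$ requires $a\le1$, so I will choose $\lambda\ge \max\{C t^{-d/(2s)}\|\varphi\|_1,\ C t^{-\frac{d}{2s}(1-\frac1{2p})}\|\varphi\|_1\}$. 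For $t\ge1$ the second entry dominates, so this choice is consistent.
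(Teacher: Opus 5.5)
Your proposal is correct and follows essentially the same route as the paper. For large times both use an $L^1$-based Taylor-series bound with the sum starting at $k=2$; the paper uses $e^{|x|^p}-1-|x|^p\le e^{|x|^{2p}}-1$ to get $t_s^{-\frac{d}{2s}}\bigl(\ln(t_s^{-\frac{d}{2s}}+1)\bigr)^{-1/(2p)}\|\varphi\|_1$, which decays like your $t^{-\frac{d}{2s}(1-\frac{1}{2p})}$. For small times both use Lemma~\ref{lemma1}-(2) with $q=2p$ plus an $L^{r}\to L^\infty$ estimate (the paper takes $r=2p'$, giving $t_s^{-1/2}$), and then integrate the resulting $\kappa\in L^1(0,\infty)$.

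There are two harmless slips. Near $t=0$ one has $t_s=t^s$ rather than $t$, which is in fact what your condition $r>d/2$ uses. Integrability at infinity requires $s<d(2p-1)/(4p)$ rather than $s<d(2p-1)/(2p)$, and this still holds because $s=d(2p-2)/(4p)$.
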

\proof On the one hand, by \ref{pqbound}, we have
\begin{eqnarray*}
\int_{\mathbb{R}^d}\phi\left(\frac{|e^{-t\mathcal{L}} \varphi|}{\lambda}\right)\,dx&=&\sum_{k=2}^{\infty}\frac{\left\|e^{-t\mathcal
L} \varphi\right\|_{pk}^{pk}}{k!\lambda^{pk}}\\
&\leq& \sum_{k=2}^{\infty}\frac{C^{pk}t_s^{-\frac{d}{2 s}(1-\frac{1}{pk})pk}\left\|\varphi\right\|_{1}^{pk}}{k!\lambda^{pk}}\\&=&t_s^{\frac{d}{2 s}}\phi\left(\frac{C t_s^{-\frac{d}{2 s}}\|\varphi\|_1}{\lambda}\right)\\
&\leq& t_s^{\frac{d}{2 s}}\left(\exp\left(\frac{C t_s^{-\frac{d}{2 s}}\|\varphi\|_1}{\lambda}\right)^{2p}-1\right),
\end{eqnarray*}
for all $\lambda>0$, $t>0$, $\varphi\in L^1(\mathbb{R}^d)$, where we have used the fact that $e^{|x|^p}-1-|x|^p\leq e^{|x|^{2p}}-1$, for all $x\in\mathbb{R}$. As
$$ t_s^{\frac{d}{2 s}}\left(\exp\left(\frac{C t_s^{-\frac{d}{2 s}}\|\varphi\|_1}{\lambda}\right)^{2p}-1\right)\leq 1\Longleftrightarrow \lambda\geq C\,t_s^{-\frac{d}{2 s}}\left(\ln( t_s^{-\frac{d}{2 s}}+1)\right)^{-1/2p}\left\|\varphi\right\|_{1};$$
hence, 
\begin{eqnarray*}
\left\{\lambda>0:\, \lambda\in [ C\,t_s^{-\frac{d}{2 s}}\left(\ln( t_s^{-\frac{d}{2 s}}+1)\right)^{-1/2p}\left\|\varphi\right\|_{1};\infty)\right\}
\,\subseteq \left\{\lambda>0:\, \int_{\mathbb{R}^d}\phi\left(\frac{|e^{-t\mathcal{L}} \varphi|}{\lambda}\right)\,dx\leq 1\right\};
\end{eqnarray*}
whereupon
\begin{equation}\label{eq12}
\left\|e^{-t\mathcal{L}} \varphi\right\|_{L^\phi}\leq C\,t_s^{-\frac{d}{2 s}}\left(\ln( t_s^{-\frac{d}{2 s}}+1)\right)^{-1/2p}\left\|\varphi\right\|_{1},
\end{equation}
for all $t>0,$\, $\varphi\in L^1(\mathbb{R}^d)$. On the other hand, from \ref{pqbound} and Lemma \ref{lemma1}-(2), we have
\begin{eqnarray}\label{eq13}
\left\|e^{-t\mathcal{L}} \varphi\right\|_{L^\phi}&\leq&\left\|e^{-t\mathcal{L}} \varphi\right\|_{\infty}+\left\|e^{-t\mathcal{L}} \varphi\right\|_{2p}\nonumber\\
&\leq& C t^{-\frac{d(p-1)}{4 sp}}\left\|\varphi\right\|_{2p'}+\left\|\varphi\right\|_{2p}\nonumber\\
&\lesssim& (t_s^{-\frac{1}{2}}+1)\left(\left\|\varphi\right\|_{2p'}+\left\|\varphi\right\|_{2p}\right),
\end{eqnarray}
for all $t>0$, $\varphi\in L^{2p}(\mathbb{R}^d)\cap L^{2p'}(\mathbb{R}^d)$, where we have used the fact that $s=d(p-1)/(2 p)$. Now, letting $g\in L^\infty(0,\infty;L^1(\mathbb{R}^d)\cap L^{2p}(\mathbb{R}^d)\cap L^{2p'}(\mathbb{R}^d))$, we conclude from (\ref{eq12}) and (\ref{eq13}) that
$$
\left\|e^{-t\mathcal{L}} g(t)\right\|_{L^\phi}\lesssim \kappa(t) \left\|g(t)\right\|_{L^1\cap L^{2p}\cap L^{2p'}},
$$
for all $t>0$, where
$$\kappa(t) :=\min\left\{(t_s^{-\frac{1}{2}}+1),t_s^{-\frac{d}{2 s}}\left(\ln( t_s^{-\frac{d}{2 s}}+1)\right)^{-1/2p}\right\}.$$
We can easily check that $\kappa\in L^1(0,\infty)$. Therefore,
\begin{eqnarray*}
\left\|\int_0^te^{-(t-\tau)\mathcal{L}} g(\tau)\,d\tau\right\|_{L^\phi}&\leq& \int_0^t\left\|e^{-(t-\tau)\mathcal{L}} g(\tau)\right\|_{L^\phi}\,d\tau\\
&\leq& \int_0^t\kappa(t-\tau)\left\|g(\tau)\right\|_{L^1\cap L^{2p}\cap L^{2p'}}\,d\tau\\
&\leq&\|g\|_{L^\infty(0,\infty;L^1\cap L^{2p}\cap L^{2p'})} \int_0^\infty\kappa(\tau)\,d\tau,
\end{eqnarray*}
for every $t>0$. This proves Lemma \ref{lemma5}. \hfill$\square$\\

 
 \section{Local well-posedness: Proof of Theorem \ref{theo1}}\label{sec4}
 In this section, we prove Theorem \ref{theo1} concerning the local well-posedness of problem \eqref{eq1}. We begin by stating several auxiliary lemmas that are essential for the proof in the space $\exp L^p_0(\mathbb{R}^d)$.

 \begin{lemma}\cite[Proposition~2.9]{Majdoub2}.\label{prop3} 
Let $1\leq p<\infty$ and $u\in C([0,T],\exp L^p_0(\mathbb{R}^d))$ for some $T>0$. Then, for every $\alpha>0$, it holds
$$\left(e^{\alpha|u|^p}-1\right)\in C([0,T],L^r(\mathbb{R}^d)),\quad 1\leq r<\infty.$$
\end{lemma}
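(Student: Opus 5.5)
The plan is to prove the statement in two steps: first a pointwise-in-time bound in $L^r$, then continuity in time via the $\exp L^p_0$ structure.

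\textbf{Step 1 (bound at a fixed time).} Fix $t\in[0,T]$ and $\alpha>0$. Since $u(t)\in \exp L^p_0(\mathbb{R}^d)$, the characterization
\[
\exp L^p_0=\Big\{v:\int_{\mathbb{R}^d}\big(e^{\beta|v|^p}-1\big)\,dx<\infty \ \text{for every } \beta>0\Big\}
\]
gives $e^{\alpha|u(t)|^p}-1\in L^1$. For $r\ge1$ I use the elementary inequality $(e^{a}-1)^r\le e^{ra}-1$ for $a\ge0$. Applying it with $a=\alpha|u(t)|^p$ and taking $\beta=r\alpha$ in the characterization shows $e^{\alpha|u(t)|^p}-1\in L^r$. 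In quantitative form I will invoke Lemma~\ref{lemma6}: whenever $\|v\|_{\exp L^p}\le K$ with $\alpha rK^p\le1$, one has $\|e^{\alpha|v|^p}-1\|_r\le(\alpha rK^p)^{1/r}$.

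\textbf{Step 2 (continuity in time).} Let $t_n\to t$ and set $u_n=u(t_n)$, $v=u(t)$. I start from the identity
\[
e^{\alpha|u_n|^p}-e^{\alpha|v|^p}=\big(e^{\alpha|u_n|^p-\alpha|v|^p}-1\big)e^{\alpha|v|^p}.
\]
By the mean value theorem,
\[
\big|e^{\alpha|a|^p}-e^{\alpha|b|^p}\big|\le \alpha p\,|a-b|\,\big(|a|^{p-1}+|b|^{p-1}\big)\big(e^{\alpha|a|^p}+e^{\alpha|b|^p}\big).
\]
Applying Hölder's inequality, the difference in $L^r$ is bounded by
\[
\|u_n-v\|_{3r}\,\big\||u_n|^{p-1}+|v|^{p-1}\big\|_{3r}\,\big\|e^{\alpha|u_n|^p}+e^{\alpha|v|^p}\big\|_{L^{3r}}.
\]
This is problematic because $e^{\alpha|\cdot|^p}\notin L^{3r}$: it tends to $1$ at infinity. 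To handle this, I split $e^{\alpha|a|^p}=(e^{\alpha|a|^p}-1)+1$. The "$+1$" part gives a term bounded by $\|u_n-v\|_{2r}\,\||u_n|^{p-1}+|v|^{p-1}\|_{2r}$, which is fine. The other part is bounded by the product of three norms above, with $e^{\alpha|\cdot|^p}-1$ in place of $e^{\alpha|\cdot|^p}$.

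\textbf{Estimating each factor.} By Lemma~\ref{lemma1}-(3), $\|u_n-v\|_q\lesssim\|u_n-v\|_{\exp L^p}\to0$ for every $q\ge p$. The powers $|u_n|^{p-1}$ are controlled in $L^q$ for $q(p-1)\ge p$ by the same lemma. The factor $\|e^{\alpha|u_n|^p}-1\|_{3r}$ must be bounded uniformly in $n$; this is the main obstacle, since Lemma~\ref{lemma6} only helps when the $\exp L^p$ norm is small. Let $\varepsilon>0$ be a smallness parameter to be chosen. To get around the obstacle, I decompose $v=v_1+v_2$ with $v_1\in C_0^\infty$ and $\|v_2\|_{\exp L^p}\le\varepsilon$, which is possible by density. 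For $n$ large this gives $u_n=v_1+w_n$ with $\|w_n\|_{\exp L^p}\le2\varepsilon$. By convexity,
\[
|u_n|^p\le 2^{p-1}\big(|v_1|^p+|w_n|^p\big),
\]
so that
\[
e^{\alpha|u_n|^p}-1\le e^{\alpha2^{p-1}\|v_1\|_\infty^p}\big(e^{\alpha2^{p-1}|w_n|^p}-1\big)+\big(e^{\alpha2^{p-1}|v_1|^p}-1\big).
\]
The first term is uniformly bounded in $L^{3r}$ by Lemma~\ref{lemma6} once $\varepsilon$ is small. The second term is bounded and compactly supported. Hence $\sup_n\|e^{\alpha|u_n|^p}-1\|_{3r}<\infty$. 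Combining the three factor estimates gives $\|e^{\alpha|u_n|^p}-e^{\alpha|v|^p}\|_r\to0$, which proves the continuity claimed in the statement.
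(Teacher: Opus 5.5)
The paper does not prove this lemma; it quotes it from \cite{Majdoub2}, so your proposal has to stand on its own. The overall strategy is sound: the mean value theorem, the splitting $e^{\alpha|\cdot|^p}=(e^{\alpha|\cdot|^p}-1)+1$, and the decomposition $v=v_1+v_2$ combined with Lemma~\ref{lemma6}. The key step, the uniform bound $\sup_n\|e^{\alpha|u_n|^p}-1\|_{\rho}<\infty$, is correct for every finite $\rho$ once $\alpha 2^{p-1}\rho\,(2\varepsilon)^p\le 1$.

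There is, however, a concrete error in the H\"older step: the exponents $2r$ and $3r$ are not admissible for general $(p,r)$. Lemma~\ref{lemma1}-(3) controls $\|g\|_q$ by $\|g\|_{\exp L^p}$ only for $q\ge p$, and this is sharp. For $q<p$, the function $(1+|x|)^{-d/q}$ lies in $L^p\cap L^\infty\subset \exp L^p_0$ but not in $L^q$. Your bound for the ``$+1$'' part needs $2r\ge p$ (for $\|u_n-v\|_{2r}$). It also needs $2r(p-1)\ge p$ (for $\||u_n|^{p-1}\|_{2r}=\|u_n\|_{2r(p-1)}^{p-1}$). Together these say $\frac{2r}{2r-1}\le p\le 2r$. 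For $r=1$ this forces $p=2$. For $p=1$ the factor $|u_n|^{p-1}+|v|^{p-1}\equiv 2$ lies in no $L^{2r}$. For $p>2r$ the difference $u_n-v$ need not be in $L^{2r}$ at all. The three-factor term with $3r$ has the same defect. So, as written, your argument covers only a thin band of $(p,r)$, not the full range $1\le p<\infty$, $1\le r<\infty$.

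The repair is to let the H\"older exponents depend on $p$:
\[
\big\||u_n-v|\,(|u_n|^{p-1}+|v|^{p-1})\big\|_r\le\|u_n-v\|_{pr}\big(\|u_n\|_{pr}^{p-1}+\|v\|_{pr}^{p-1}\big),
\]
\[
\big\||u_n-v|\,(|u_n|^{p-1}+|v|^{p-1})\,(e^{\alpha|u_n|^p}-1)\big\|_r\le\|u_n-v\|_{2pr}\big(\|u_n\|_{2pr}^{p-1}+\|v\|_{2pr}^{p-1}\big)\big\|e^{\alpha|u_n|^p}-1\big\|_{2r}.
\]
The same bound holds with $e^{\alpha|v|^p}-1$ in place of $e^{\alpha|u_n|^p}-1$. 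These follow from $\frac{1}{pr}+\frac{p-1}{pr}=\frac1r$ and $\frac{1}{2pr}+\frac{p-1}{2pr}+\frac{1}{2r}=\frac1r$, with the power factor estimated in $L^\infty$ by $1$ when $p=1$.

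Every Lebesgue exponent applied to $u_n$, $v$ or $u_n-v$ is now at least $p$. Hence Lemma~\ref{lemma1}-(3) gives $\|u_n-v\|_{pr}+\|u_n-v\|_{2pr}\to0$ and uniform bounds on $\|u_n\|_{pr}$ and $\|u_n\|_{2pr}$, since $(u_n)$ is bounded in $\exp L^p$. Your decomposition argument, run with exponent $2r$, bounds $\sup_n\|e^{\alpha|u_n|^p}-1\|_{2r}$. With this correction the proof is complete.
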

\begin{lemma}\cite[Corollary~2.13]{Majdoub2}.\label{corollary1}
Let $1\leq p<\infty$ and $u\in C([0,T],\exp L^p_0(\mathbb{R}^d))$ for some $T>0$. Assume that $f$ satisfies $(\ref{eq3})$. Then, for every $p\leq r<\infty$, it holds
$$f(u)\in C([0,T],L^r(\mathbb{R}^d)).$$
\end{lemma}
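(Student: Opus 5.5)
The statement to prove is Lemma~\ref{corollary1}: if $u\in C([0,T],\exp L^p_0(\mathbb{R}^d))$ and $f$ satisfies \eqref{eq3}, then $f(u)\in C([0,T],L^r(\mathbb{R}^d))$ for every $p\le r<\infty$. The plan is to reduce everything to Lemma~\ref{prop3}, which gives continuity in $L^r$ of $e^{\alpha|u|^p}-1$, together with the embedding \eqref{eq8}, which gives continuity of $u$ itself in $L^q$ for every $q\ge p$.

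\textbf{Step 1: a pointwise bound.} Taking $v=0$ in \eqref{eq3} and using $f(0)=0$ gives
\[
|f(u)|\le C|u|\big(e^{\lambda|u|^p}+1\big)\le C|u|\big(e^{\lambda|u|^p}-1\big)+2C|u|.
\]
Thus $f(u(t))$ is controlled by products of powers of $|u|$ with $e^{\lambda|u|^p}-1$.

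\textbf{Step 2: membership in $L^r$.} Fix $t\in[0,T]$ and apply Hölder's inequality to the first term:
\[
\big\||u|(e^{\lambda|u|^p}-1)\big\|_r\le \|u\|_{2r}\,\|e^{\lambda|u|^p}-1\|_{2r}.
\]
Since $2r\ge p$, \eqref{eq8} gives $\|u\|_{2r}\lesssim \|u\|_{\exp L^p}$. The factor $\|e^{\lambda|u|^p}-1\|_{2r}$ is finite by Lemma~\ref{prop3}. The second term satisfies $\|u\|_r\lesssim\|u\|_{\exp L^p}$, again by \eqref{eq8}, because $r\ge p$. Hence $f(u(t))\in L^r$ for each $t$.

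\textbf{Step 3: continuity in time.} Fix $t_0$ and write $u=u(t)$, $v=u(t_0)$. By \eqref{eq3},
\[
|f(u)-f(v)|\le C|u-v|\big[(e^{\lambda|u|^p}-1)+(e^{\lambda|v|^p}-1)+2\big].
\]
Hölder's inequality then yields
\[
\|f(u)-f(v)\|_r\le C\|u-v\|_{2r}\Big(\|e^{\lambda|u|^p}-1\|_{2r}+\|e^{\lambda|v|^p}-1\|_{2r}\Big)+2C\|u-v\|_r .
\]
By \eqref{eq8}, both $\|u-v\|_{2r}$ and $\|u-v\|_r$ are bounded by a multiple of $\|u(t)-u(t_0)\|_{\exp L^p}$, which tends to $0$ as $t\to t_0$. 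The bracketed factors are bounded uniformly on $[0,T]$, since by Lemma~\ref{prop3} they are continuous on the compact interval $[0,T]$. Therefore $t\mapsto f(u(t))$ is continuous into $L^r$.

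The only delicate point is the uniform-in-time bound on $\|e^{\lambda|u(t)|^p}-1\|_{2r}$. A direct use of Lemma~\ref{lemma6} would require $\|u\|_{\exp L^p}$ to be small, which is not assumed. This is exactly why the hypothesis $u(t)\in\exp L^p_0$ enters: it allows Lemma~\ref{prop3} to be used for an arbitrary $\alpha=\lambda$, with no smallness condition.
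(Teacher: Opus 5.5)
Your proof is correct. It rests on three ingredients:
\begin{itemize}
\item the pointwise splitting $|f(u)-f(v)|\le C|u-v|[(e^{\lambda|u|^p}-1)+(e^{\lambda|v|^p}-1)+2]$, followed by H\"older's inequality with exponents $(2r,2r)$;
\item the embedding \eqref{eq8}, which controls $\|u(t)-u(t_0)\|_{2r}$ and $\|u(t)-u(t_0)\|_{r}$ by $\|u(t)-u(t_0)\|_{\exp L^p}$;
\item the bound on $\|e^{\lambda|u(t)|^p}-1\|_{2r}$, uniform on $[0,T]$, which you obtain from Lemma~\ref{prop3} without any smallness assumption.
\end{itemize}
The paper gives no proof of its own and simply imports the result from \cite[Corollary~2.13]{Majdoub2}, where it is derived as a consequence of the analogue of Lemma~\ref{prop3}. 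Your argument follows essentially that intended route.
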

\begin{lemma}${}$\label{lemma7} 
Let $s\in(0,1)$, $p>1$ and $v_0\in L^p(\mathbb{R}^d)\cap L^\infty(\mathbb{R}^d)$. Suppose that the nonlinearity $f$ satisfies condition \emph{(\ref{eq3})}. Then, there exist a time $T=T(v_0)>0$ and a mild solution $v\in C([0,T],\exp L^p_0(\mathbb{R}^d))\cap L^\infty(0,T;L^\infty(\mathbb{R}^d))$ of 
\begin{equation}\label{eq24}
\left\{\begin{array}{ll}
\,\, \displaystyle {\partial_t v+\mathcal{L}v =f(v),} &\displaystyle {t>0,x\in {\mathbb{R}^d},}\\
{}\\
\displaystyle{v(0)=  v_0 \qquad\qquad}&\displaystyle{x\in {\mathbb{R}^d}}.
\end{array}
\right.
\end{equation} 
 \end{lemma}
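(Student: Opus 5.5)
We plan to prove Lemma~\ref{lemma7} by a Banach fixed-point argument in the space $X_T:=L^\infty(0,T;L^p(\mathbb{R}^d)\cap L^\infty(\mathbb{R}^d))$, with norm $\|v\|_{X_T}:=\sup_{0<t<T}(\|v(t)\|_p+\|v(t)\|_\infty)$. We will work in the closed ball $B_T:=\{v\in X_T:\ \|v\|_{X_T}\le 2M\}$, where $M:=C(\|v_0\|_p+\|v_0\|_\infty)$ and $C$ is the constant of \eqref{pqbound} with $q=r$. The solution map is
\[
\Phi(v)(t):=e^{-t\mathcal{L}}v_0+\int_0^t e^{-(t-\tau)\mathcal{L}}f(v(\tau))\,d\tau .
\]
By \eqref{pqbound} with $q=r\in\{p,\infty\}$, the semigroup is uniformly bounded on $L^p$ and on $L^\infty$. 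Hence $\|e^{-t\mathcal{L}}v_0\|_{X_T}\le M$.

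For the nonlinear term we use \eqref{eq3} with $v=0$ and $f(0)=0$. This gives, pointwise for $u,w\in B_T$,
\[
|f(u)-f(w)|\le C|u-w|\left(e^{\lambda|u|^p}+e^{\lambda|w|^p}\right)\le 2C e^{\lambda(2M)^p}|u-w| .
\]
Therefore $\|f(u)-f(w)\|_{p}+\|f(u)-f(w)\|_\infty\le 2Ce^{\lambda(2M)^p}\|u-w\|_{X_T}$ at each time. Applying the contraction property of $e^{-t\mathcal{L}}$ on $L^p$ and $L^\infty$ and integrating in time, we obtain
\[
\|\Phi(u)-\Phi(w)\|_{X_T}\le 2CTe^{\lambda(2M)^p}\|u-w\|_{X_T},\qquad \|\Phi(u)\|_{X_T}\le M+2CTe^{\lambda(2M)^p}\,2M .
\]
We then choose $T$ so small that $2CTe^{\lambda(2M)^p}\le 1/4$. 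With this choice $\Phi$ maps $B_T$ into itself and is a contraction. Since $B_T$ is complete (a closed subset of a Banach space), there is a unique fixed point $v\in B_T$, and this $v$ belongs to $L^\infty(0,T;L^\infty)$.

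It remains to show $v\in C([0,T],\exp L^p_0(\mathbb{R}^d))$; we expect this to be the main point requiring care. By Lemma~\ref{lemma1}-(1) with $q=p$, the space $L^p\cap L^\infty$ embeds continuously into $\exp L^p_0$, so it suffices to prove continuity in $L^p\cap L^\infty$, or at least in a norm controlling $\|\cdot\|_{\exp L^p}$.

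For the linear part, continuity in $L^p$ follows from strong continuity of the semigroup on $L^p$. Continuity in $\exp L^p_0$ follows directly from Proposition~\ref{prop2}, since $v_0\in\exp L^p_0$.

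For the Duhamel term, we observe that $f(v)\in L^\infty(0,T;L^p\cap L^\infty)$. By interpolation $f(v)\in L^1(0,T;L^r)$ for all $p\le r<\infty$, so Lemma~\ref{lemma10} with $X=L^r$ gives continuity in each $L^r$. The $L^\infty$ component is not continuous in general, which is the obstacle. To handle it, we will not use \eqref{eq6} directly. Instead we expand the Luxemburg functional as the series $\sum_k \|\cdot\|_{pk}^{pk}/(k!\mu^{pk})$, exactly as in the proof of Proposition~\ref{techprop1}. We will use continuity in each $L^{pk}$ together with the uniform $L^\infty$ bound, and bound the tail $k\ge k_0$ by $\|g\|_p^p\,(2\|g\|_\infty/\mu)^{pk}/k!$, which is uniformly small in $t$. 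This yields $\|w(t)-w(t')\|_{\exp L^p}\to 0$ as $t'\to t$ for the Duhamel term $w$. Alternatively, Lemma~\ref{lemma10} can be applied with $X=\exp L^p_0$, provided we know $f(v)\in L^1(0,T;\exp L^p_0)$; this holds since $f(v(\tau))\in L^p\cap L^\infty\subset\exp L^p_0$ with uniform bounds, and measurability follows from the $L^r$-continuity above. Either route gives $v\in C([0,T],\exp L^p_0)$.
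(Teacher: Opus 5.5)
Your proposal is correct and is essentially the paper's argument. Both run a Banach fixed point for the Duhamel map in the norm $\sup_{t}(\|v(t)\|_p+\|v(t)\|_\infty)$, using the pointwise Lipschitz bound $2Ce^{\lambda(2M)^p}$ from \eqref{eq3} and taking $T$ small. You even keep track of the $L^q$--$L^q$ constant of the semigroup, which the paper tacitly sets to $1$.

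The only difference is organizational. The paper builds $C([0,T],\exp L^p_0(\mathbb{R}^d))$ into the fixed-point space and checks that $\Phi$ preserves it via Proposition~\ref{prop2} and Lemma~\ref{lemma10} with $X=\exp L^p_0(\mathbb{R}^d)$, which is your ``alternative'' route. You instead recover continuity a posteriori. Your series argument is a valid self-contained substitute for this: it really only needs $L^p$-continuity together with the bound $\int(e^{|g|^p/\mu^p}-1)\,dx\le\mu^{-p}\|g\|_p^p\,e^{\|g\|_\infty^p/\mu^p}$.
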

 \begin{proof} We prove this lemma by applying the Banach fixed-point theorem.  For this purpose, we first introduce the following complete metric space
$$Y_T:=\left\{v\in C([0,T],\exp L^p_0(\mathbb{R}^d))\cap L^\infty(0,T;L^\infty(\mathbb{R}^d)):\,\,\|v\|_{Y_T}\leq 2\|v_0\|_{L^p\cap L^\infty}\right\},$$
where $\|v\|_{Y_T}:=\|v\|_{L^\infty(0,T;L^p)}+\|v\|_{L^\infty(0,T;L^\infty)}$ and $\|v_0\|_{L^p\cap L^\infty}:=\|v_0\|_{L^p}+\|v_0\|_{L^\infty}$. For $v\in Y_T$, we define the operator
$$\Phi(v):=e^{-t\mathcal{L}}v_0+\int_0^te^{-(t-s)\mathcal{L}}f(v(\tau))\,d\tau.$$
We will show that, for $T>0$ sufficiently small, the mapping $\Phi$ is a contraction on $Y_T$.\\
First, we verify that $\Phi:Y_T\rightarrow Y_T$. Let $v\in Y_T$. Since $v_0\in L^p(\mathbb{R}^d)\cap L^\infty(\mathbb{R}^d)$,  Lemma \ref{lemma1}-(1) implies that $v_0\in\exp L^p_0(\mathbb{R}^d)$. Consequently, by Proposition \ref{prop2}, we have $e^{-t\mathcal{L}}v_0\in C([0,T],\exp L^p_0(\mathbb{R}^d))$. Next, for $q=p$ or $q=\infty$, we have
\begin{equation}\label{eq26}
\|f(v)\|_{q}\leq Ce^{\lambda\|v\|^p_\infty}\|v\|_{q}\leq Ce^{\lambda\|v\|^p_\infty}(2\|v_0\|_{L^p\cap L^\infty}),
\end{equation}
which, together with Lemma \ref{lemma1}-(1), yields $f(v)\in \exp L^p_0(\mathbb{R}^d)$. More precisely, we obtain $f(v)\in L^1(0,T;\exp L^p_0(\mathbb{R}^d))$ . Therefore, by density arguments and the smoothing effect of the fractional semigroup $e^{-t\mathcal{L}}$ (Lemma \ref{lemma10}), it follows that 
$$\int_0^te^{-(t-s)\mathcal{L}}f(v(s))\,ds\in C([0,T],\exp L^p_0(\mathbb{R}^d)).$$
Thus, $\Phi(v)\in C([0,T],\exp L^p_0(\mathbb{R}^d))$. Moreover, using (\ref{pqbound}) and (\ref{eq26}), we obtain
$$\|\Phi(v)\|_{Y_T}\leq \|v_0\|_{L^p\cap L^\infty} +2TC(2\|v_0\|_{L^p\cap L^\infty})e^{\lambda(2\|v_0\|_{L^p\cap L^\infty})^p}\leq 2\|v_0\|_{L^p\cap L^\infty},$$
provided that $T>0$ is sufficiently small, namely $4TCe^{\lambda(2\|v_0\|_{L^p\cap L^\infty})^p}\leq 1$. This shows that $\Phi(v)\in Y_T$.\\
We now prove that $\Phi$ is a contraction. Let $v_1,v_2\in Y_T$. For $q=p$ or $q=\infty$, we have
$$
\|f(v_1)-f(v_2)\|_{q}\leq C\|v_1-v_2\|_q(e^{\lambda\|v_1\|^p_\infty}+e^{\lambda\|v_2\|^p_\infty})\leq 2C\|v_1-v_2\|_{Y_T} e^{\lambda(2\|v_0\|_{L^p\cap L^\infty})^p}.
$$
Using (\ref{pqbound}), we deduce
$$
\|\Phi(v_1)-\Phi(v_2)\|_{Y_T}\leq 2TC\|v_1-v_2\|_{Y_T} e^{\lambda(2\|v_0\|_{L^p\cap L^\infty})^p}\leq \frac{1}{2}\|v_1-v_2\|_{Y_T},
$$
provided that $T>0$ is sufficiently small. Hence, $\Phi$ is a contraction on $Y_T$. Therefore, by the Banach fixed point theorem, $\Phi$ admits a unique fixed point $u \in Y_T$. In particular, $u$ is a mild solution of the considered problem. This completes the proof of Lemma~\ref{lemma7}.
\end{proof}
\begin{lemma}\cite[Lemma 4.4]{Majdoub2}.\label{lemma9}
Let $v\in L^\infty(0,T;L^\infty(\mathbb{R}^d))$ for some $T>0$. Suppose that the nonlinearity $f$ satisfies condition \emph{(\ref{eq3})} with some $\lambda>0.$ Let $1<p\leq q<\infty$, and $w_1,w_2\in \exp L^p(\mathbb{R}^d)$ with $\|w_1\|_{\exp L^p},\|w_2\|_{\exp L^p}\leq M$ for sufficiently small $M>0$ (namely $2^p\lambda q M^p\leq1$). Then, there exists a constant $C=C(q)>0$ such that 
$$\|f(w_1+v)-f(w_2+v)\|_q\leq Ce^{2^{p-1}\lambda\|v\|_\infty^p}\|w_1-w_2\|_{\exp L^p}.$$
 \end{lemma}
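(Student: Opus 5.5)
Write $D:=f(w_1+v)-f(w_2+v)$. By condition \eqref{eq3} applied pointwise with $u=w_1+v$ and $w_2+v$ in place of $u,v$, we have
\[
|D|\le C|w_1-w_2|\left(e^{\lambda|w_1+v|^p}+e^{\lambda|w_2+v|^p}\right).
\]
So it suffices to bound $\| |w_1-w_2|\,e^{\lambda|w_i+v|^p}\|_q$ for $i=1,2$.

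\textbf{Splitting the exponential.} By convexity of $\tau\mapsto\tau^p$ we have $|w_i+v|^p\le 2^{p-1}(|w_i|^p+|v|^p)$. Hence
\[
e^{\lambda|w_i+v|^p}\le e^{2^{p-1}\lambda\|v\|_\infty^p}\,e^{2^{p-1}\lambda|w_i|^p},
\]
and the factor depending on $v$ comes out of the norm as the announced constant. We then write $e^{2^{p-1}\lambda|w_i|^p}=1+\big(e^{2^{p-1}\lambda|w_i|^p}-1\big)$, which gives two terms.

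\textbf{The first term.} It is $\|w_1-w_2\|_q$. Since $q\ge p$, Lemma~\ref{lemma1}-(3) bounds it by $\Gamma(q/p+1)^{1/q}\|w_1-w_2\|_{\exp L^p}$.

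\textbf{The second term.} We use H\"older's inequality with exponents $2q,2q$:
\[
\|w_1-w_2\|_{2q}\,\big\|e^{2^{p-1}\lambda|w_i|^p}-1\big\|_{2q}.
\]
The first factor is again controlled by Lemma~\ref{lemma1}-(3), since $2q\ge p$. For the second factor we apply Lemma~\ref{lemma6} with $2^{p-1}\lambda$ in place of $\lambda$, integrability exponent $2q$, and $K=M$. Its hypothesis reads $2^{p-1}\lambda\cdot 2q\,M^p\le1$, i.e.\ $2^p\lambda qM^p\le 1$, which is exactly the smallness assumed on $M$. It yields the bound $(2^p\lambda qM^p)^{1/(2q)}\le 1$.

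\textbf{Conclusion and main point.} Summing over $i=1,2$ gives the claim with $C=C(q)$ depending only on Gamma factors. The only delicate step is matching the exponents so that the hypothesis of Lemma~\ref{lemma6} coincides with $2^p\lambda qM^p\le1$; the H\"older choice $(2q,2q)$ is what makes this work. Note that Lemma~\ref{lemma6} is stated with $\exp(|u|^p/\lambda^p)$ but concludes with $e^{\lambda|u|^p}$; I use the latter, consistent form.
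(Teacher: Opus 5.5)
Your proof is correct and follows the standard argument for this lemma, which the paper does not prove itself but quotes from Majdoub--Tayachi. The steps are: apply \eqref{eq3} pointwise; use $|w_i+v|^p\le 2^{p-1}(|w_i|^p+|v|^p)$ to pull out $e^{2^{p-1}\lambda\|v\|_\infty^p}$; split off the $1$ and bound the remainder by H\"older with exponents $(2q,2q)$, Lemma~\ref{lemma1}-(3), and Lemma~\ref{lemma6} with $2^{p-1}\lambda$ and exponent $2q$, whose hypothesis becomes exactly $2^p\lambda qM^p\le 1$. Your reading of Lemma~\ref{lemma6} with $e^{\lambda|u|^p}$ is the consistent one: under the hypothesis $\lambda qK^p\le 1$ it follows from $(e^{a}-1)^q\le e^{qa}-1$ for $a\ge 0$, $q\ge 1$, together with $e^{\theta\tau}-1\le\theta(e^{\tau}-1)$ for $0\le\theta\le 1$.
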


\begin{lemma}${}$\label{lemma8}
Let $s\in(0,1)$, $p>1$, and $w_0\in \exp L^p_0(\mathbb{R}^d)$. Suppose that the nonlinearity $f$ satisfies condition \emph{(\ref{eq3})}. Let $T>0$ and $v\in L^\infty(0,T;L^\infty(\mathbb{R}^d))$ be given by Lemma $\ref{lemma7}$. Then, for $\|w_0\|_{\exp L^p}\leq\varepsilon$, with $\varepsilon\ll1$ small enough, there exist a time $\widetilde{T}=\widetilde{T}(w_0,\varepsilon, v)>0$ and a mild solution $w\in C([0,\widetilde{T}],\exp L^p_0(\mathbb{R}^d))$ to the problem 
\begin{equation}\label{eq25}
\left\{\begin{array}{ll}
\,\, \displaystyle {\partial_t w+\mathcal{L}w =f(w+v)-f(v),} &\displaystyle {t>0,x\in {\mathbb{R}^d},}\\
{}\\
w(0)= w_0, & \displaystyle{ x\in {\mathbb{R}^d}.}
\end{array}
\right.
\end{equation} 
 \end{lemma}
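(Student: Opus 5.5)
We construct $w$ as a fixed point of the Duhamel map
\[
\Psi(w)(t):=e^{-t\mathcal{L}}w_0+\int_0^t e^{-(t-\tau)\mathcal{L}}\big(f(w(\tau)+v(\tau))-f(v(\tau))\big)\,d\tau ,
\]
using the Banach fixed-point theorem on
\[
W_{\widetilde T}:=\Big\{w\in C([0,\widetilde T],\exp L^p_0(\mathbb{R}^d)):\ \|w\|_{L^\infty(0,\widetilde T;\exp L^p)}\le 2\varepsilon\Big\},
\]
with the metric induced by $\|\cdot\|_{L^\infty(0,\widetilde T;\exp L^p)}$. This set is closed in a Banach space, hence complete. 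Throughout, $\varepsilon$ is fixed so small that $M:=2\varepsilon$ satisfies the smallness condition $2^p\lambda q M^p\le 1$ of Lemma~\ref{lemma9}, with $q=p$ and also with one fixed $q=r>\max\{p,d/2\}$. We write $K:=\|v\|_{L^\infty(0,T;L^\infty)}$, which is finite by Lemma~\ref{lemma7}.

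\emph{Linear term.} By Proposition~\ref{prop2}, $e^{-t\mathcal{L}}w_0\in C([0,\infty),\exp L^p_0)$. By Proposition~\ref{techprop1}-(1), its norm is at most $\|w_0\|_{\exp L^p}\le\varepsilon$.

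\emph{Nonlinear term.} Set $g(\tau):=f(w(\tau)+v(\tau))-f(v(\tau))$. Applying Lemma~\ref{lemma9} with $w_1=w$ and $w_2=0$ gives, for $q\in\{p,r\}$,
\[
\|g(\tau)\|_q\le C e^{2^{p-1}\lambda K^p}\|w(\tau)\|_{\exp L^p}.
\]
We estimate the Duhamel integral in $\exp L^p$ using Proposition~\ref{techprop1}-(3) with the pair $(q,r)=(p,r)$:
\[
\Big\|\int_0^t e^{-(t-\tau)\mathcal{L}}g\,d\tau\Big\|_{\exp L^p}\lesssim \int_0^t\big[(t-\tau)_s^{-\frac{d}{2sr}}\|g(\tau)\|_r+\|g(\tau)\|_p\big]\,d\tau .
\]
For $0<t\le 1$ we have $t_s=t$, and the exponent $d/(2r)<1$ is integrable. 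Hence the right-hand side is bounded by
\[
C\big(\widetilde T^{\,1-\frac{d}{2r}}+\widetilde T\big)e^{2^{p-1}\lambda K^p}\sup_{\tau}\|w(\tau)\|_{\exp L^p}.
\]
For $w\in W_{\widetilde T}$ this is at most $2\varepsilon\, C(\widetilde T^{1-d/2r}+\widetilde T)e^{2^{p-1}\lambda K^p}$, which is $\le\varepsilon$ once $\widetilde T\le T$ is small. Running the same computation on differences, with Lemma~\ref{lemma9} applied to $w_1,w_2\in W_{\widetilde T}$, gives
\[
\|\Psi(w_1)-\Psi(w_2)\|\le \tfrac12\|w_1-w_2\| .
\]
Therefore $\Psi$ is a self-map and a contraction on $W_{\widetilde T}$, so it has a fixed point $w$.

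\emph{Continuity.} The remaining point, and the one needing care, is that $\Psi(w)\in C([0,\widetilde T],\exp L^p_0)$, and not merely in $L^\infty$ in time. The plan is to show $g\in C([0,\widetilde T],L^p\cap L^r)$ and then use the following chain:
\begin{itemize}
\item $g(\tau)\in L^p\cap L^r$, and $e^{-\delta\mathcal{L}}$ maps $L^p\cap L^r$ into $L^p\cap L^\infty\subset\exp L^p_0$ (Lemma~\ref{lemma1}-(1)).
\item By Proposition~\ref{techprop1}-(3), $\tau\mapsto e^{-(t-\tau)\mathcal{L}}g(\tau)$ is an integrable $\exp L^p_0$-valued function on $[0,t-\delta]$.
\item The piece $\int_{t-\delta}^t$ tends to $0$ in $\exp L^p$ as $\delta\to0$, by the integrable singular bound above.
\item Continuity in $t$ then follows from Lemma~\ref{lemma10} applied in $X=\exp L^p_0$ to the truncated integrals, together with uniform convergence as $\delta\to0$.
\end{itemize}

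The step we expect to be the main obstacle is the continuity of $g$ itself. Here $v$ is only in $L^\infty$, so Corollary~\ref{corollary1} does not apply directly to $w+v$. Instead, continuity of $\tau\mapsto v(\tau)$ in $\exp L^p_0$ (from Lemma~\ref{lemma7}) and of $w$, combined with the Lipschitz bound of Lemma~\ref{lemma9} in the form
\[
\|f(w(t)+v(t))-f(w(\tau)+v(\tau))\|_q\lesssim \|(w+v)(t)-(w+v)(\tau)\|\times(\text{exponential integrability}),
\]
should give it. The exponential integrability factor would be controlled via Lemma~\ref{prop3} and Hölder's inequality.
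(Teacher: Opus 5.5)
Your proposal is correct and follows the paper's proof essentially step for step. It uses the same ball $W_{\widetilde T}$ in $C([0,\widetilde T],\exp L^p_0)$, Lemma~\ref{lemma9} with $q=p$ and $q=r>\max\{p,d/2\}$, and the $L^p$/$L^\infty$ splitting that you package as Proposition~\ref{techprop1}-(3). Your truncation argument for time continuity is actually more careful than the paper's direct appeal to Lemma~\ref{lemma10} in $\exp L^p_0$, since for a general $w\in W_{\widetilde T}$ the forcing $f(w+v)-f(v)$ need not belong to $\exp L^p_0$.

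Two small slips should be fixed. First, for $0<t\le 1$ one has $t_s=t^s$, not $t$; this is exactly what gives the integrable kernel $(t-\tau)^{-d/(2r)}$ you use. Second, Corollary~\ref{corollary1} does apply directly: Lemma~\ref{lemma7} gives $v\in C([0,T],\exp L^p_0)$, hence $w+v\in C([0,\widetilde T],\exp L^p_0)$, so the obstacle you flag is not a real one.
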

\begin{proof} This lemma is also proved by applying the Banach fixed-point theorem. For $\widetilde{T}>0$, we define
$$W_{\widetilde{T}}:=\left\{w\in C([0,\widetilde{T}],\exp L^p_0(\mathbb{R}^d)):\,\,\|w\|_{L^\infty(0,\widetilde{T};\exp L^p_0)}\leq 2\varepsilon\right\},$$
and we introduce the mapping $\Phi$ on $ W_{\widetilde{T}}$ by
$$\Phi(w):=e^{-t\mathcal{L}}w_0+\int_0^te^{-(t-\tau)\mathcal{L}}\left(f(w(\tau)+v(\tau))-f(v(\tau))\right)\,d\tau.$$
We first show that $\Phi$ is a contraction on $W_{\widetilde{T}}$ for sufficiently small $\varepsilon$ and $\widetilde{T}>0$. Let $w_1,w_2\in W_{\widetilde{T}}$. Using Lemma \ref{lemma1}-(1), we obtain
\begin{equation}\label{eq27}
\|\Phi(w_1)-\Phi(w_2)\|_{\exp L^p}\leq \frac{1}{(\ln 2)^{1/p}}\left(\|\Phi(w_1)-\Phi(w_2)\|_{p}+\|\Phi(w_1)-\Phi(w_2)\|_{\infty}\right).
\end{equation}
Let $r>0$ be a parameter chosen such that $r>\max\{p,\frac{d}{2}\}$. Then, by the $L^r-L^\infty$ estimate (\ref{pqbound}), we have
$$\|\Phi(w_1)-\Phi(w_2)\|_{\infty}\leq C\int_0^t(t-\tau)_s^{-\frac{d}{2 s r}}\|f(w_1(\tau)+v(\tau))-f(w_2(\tau)+v(\tau))\|_r\,d\tau.$$
Applying Lemma \ref{lemma9} and assuming that $2^p\lambda r (2\varepsilon)^p\leq1$, together with the estimate $(t-\tau)_s\geq (t-s)^s$, we obtain
\begin{eqnarray}\label{eq28}
\|\Phi(w_1)-\Phi(w_2)\|_{\infty}&\leq& Ce^{2^{p-1}\lambda\|v\|_\infty^p}\left(\int_0^t(t-\tau)^{-\frac{d}{2 r}}\,d\tau\right)\|w_1-w_2\|_{L^\infty(0,\widetilde{T};\exp L^p)}\nonumber\\
&\leq& Ce^{2^{p-1}\lambda\|v\|_\infty^p}\widetilde{T}^{1-\frac{d}{2 r}}\|w_1-w_2\|_{L^\infty(0,\widetilde{T};\exp L^p)}.
\end{eqnarray}
In a similar way, under the condition $2^p\lambda p (2\varepsilon)^p\leq1$, we get
\begin{eqnarray}\label{eq29}
\|\Phi(w_1)-\Phi(w_2)\|_{p}&\leq& \int_0^t\left\|f(w_1(\tau)+v(\tau))-f(w_2(\tau)+v(\tau))\right\|_{p}\,d\tau\nonumber\\
&\leq& Ce^{2^{p-1}\lambda\|v\|_\infty^p}\widetilde{T}\|w_1-w_2\|_{L^\infty(0,\widetilde{T};\exp L^p)}.
\end{eqnarray}
Combine estimates (\ref{eq28}) and (\ref{eq29}) with (\ref{eq27}), we deduce that, for $\varepsilon\ll1$ sufficiently small,
\begin{equation}\label{eq30}
\|\Phi(w_1)-\Phi(w_2)\|_{\exp L^p}\leq \frac{1}{2}\|w_1-w_2\|_{L^\infty(0,\widetilde{T};\exp L^p)},
\end{equation}
provided that $\widetilde{T}\ll1$ is chosen sufficiently small so that $Ce^{2^{p-1}\lambda\|v\|_\infty^p}\left(\widetilde{T}+\widetilde{T}^{1-\frac{d}{2 r}}\right)\leq 1/2$.\\
We now show that $\Phi$ maps $W_{\widetilde{T}}$ into itself. Let $w\in W_{\widetilde{T}}$. Since $w_0\in L^p(\mathbb{R}^d)\cap L^\infty(\mathbb{R}^d)$, Lemma \ref{lemma1}-(1) ensures that $w_0\in\exp L^p_0(\mathbb{R}^d)$. Consequently, by Proposition \ref{prop2}, 
$$e^{-t\mathcal{L}}w_0\in C([0,T],\exp L^p_0(\mathbb{R}^d)).$$ 
Next, applying estimates (\ref{eq28})-(\ref{eq29}) with $w_1=w$ and $w_2=0$, and assuming that $2^p\lambda r (2\varepsilon)^p\leq1$, we deduce that
$$\Phi(w)-e^{-t\mathcal{L}}w_0\in L^\infty(0,T;\exp L^p_0(\mathbb{R}^d)),$$
thanks to the embedding $L^p(\mathbb{R}^d)\cap L^\infty(\mathbb{R}^d)\hookrightarrow \exp L^p_0(\mathbb{R}^d)$ (Lemma \ref{lemma1}-(1)). By the smoothing effect of the fractional semigroup $e^{-t\mathcal{L}}$ (Lemma \ref{lemma10}), it follows that 
$$\Phi(w)-e^{-t\mathcal{L}}w_0\in C([0,T],\exp L^p_0(\mathbb{R}^d)),$$
and hence $\Phi(w)\in C([0,T],\exp L^p_0(\mathbb{R}^d))$.  Moreover, using Proposition \ref{techprop1} and estimate (\ref{eq30}) with $w_1=w$ and $w_2=0$, we obtain for $T>0$ sufficiently small that 
$$\|\Phi(w)\|_{W_{\widetilde{T}}}\leq \|w_0\|_{\exp L^p} +\frac{1}{2}\|w\|_{L^\infty(0,\widetilde{T};\exp L^p)}\leq 2\varepsilon.$$
This establishes that $\Phi(w)\in W_{\widetilde{T}}$. The conclusion then follows from the Banach fixed-point theorem. This completes the proof of Lemma \ref{lemma8}.
\end{proof}

{\bf Proof of Theorem~\ref{theo1}.} To prove the existence of the desired solution, let $u_0 \in \exp L^p_0(\mathbb{R}^d)$. Then, for every $\varepsilon > 0$, there exists $v_0 \in C^\infty_0(\mathbb{R}^d)$ such that
$\|w_0\|_{\exp L^p}\leq\varepsilon,$ where $w_0:=u_0-v_0$. We now split problem (\ref{eq1}) into two subproblems, \eqref{eq24} and \eqref{eq25}, where the first corresponds to smooth initial data and the second to small initial data in $\exp L^p(\mathbb{R}^d)$. Let $r>\max\left\{p,d/2\right\}$ and fix $\varepsilon>0$ such that $2^p\lambda r (2\varepsilon)^p\leq1$. By Lemma \ref{lemma7}, there exist $0<T_1=T_1(\|v_0\|_{L^p\cap L^\infty})\ll1$
and a mild solution $v\in C([0,T_1],\exp L^p_0(\mathbb{R}^d))
\cap L^\infty(0,T_1;L^\infty(\mathbb{R}^d))$ to problem \eqref{eq24}, satisfying $\|v\|_{L^\infty(0,T_1;L^p\cap L^\infty)}
\le 2\|v_0\|_{L^p\cap L^\infty}.$ Choose $\widetilde{T}\in(0,T_1)$ such that
\[
C e^{2^{\,2p-1}\lambda\|v_0\|_{L^p\cap L^\infty}^p}
\left(\widetilde{T}
+\widetilde{T}^{1-\frac{d}{2r}}\right)
\le \frac12.
\]
Then, by Lemma \ref{lemma8}, there exists a mild solution $w\in C([0,\widetilde{T}];\exp L^p_0(\mathbb{R}^d))$ to  \eqref{eq25}.
Setting $u=v+w$, we obtain $ u\in C([0,\widetilde{T}];\exp L^p_0(\mathbb{R}^d))$ which is a mild solution of \eqref{eq1}.\\
To prove uniqueness, let $u,v \in C([0,T],\exp L^p_0(\mathbb{R}^d))$ 
be two mild solutions of \eqref{eq1} on $[0,T]$ with the same initial data 
$u(0)=v(0)=u_0$. Define
\[
t_0=\sup\left\{t\in[0,T]:\, u(\tau)=v(\tau)\ \text{for all } \tau\in[0,t]\right\}.
\]
Assume that $0\le t_0<T$. By continuity, it follows that 
$u(t_0)=v(t_0)$. Define the shifted functions
$
\tilde{u}(t)=u(t+t_0), \,\,
\tilde{v}(t)=v(t+t_0).
$
Then $\tilde{u},\tilde{v}\in C([0,T-t_0],\exp L^p_0(\mathbb{R}^d))$, 
and they satisfy \eqref{eq2} on $(0,T-t_0]$, with $\tilde{u}(0)=\tilde{v}(0)=u(t_0).$
We will show that there exists $\tilde{t}\in(0,T-t_0]$ such that
\begin{equation}\label{eq31}
\sup_{0<t<\tilde{t}}
\|\tilde{u}(t)-\tilde{v}(t)\|_{\exp L^p}
\le C(\tilde{t})
\sup_{0<t<\tilde{t}}
\|\tilde{u}(t)-\tilde{v}(t)\|_{\exp L^p},
\end{equation}
with a constant $C(\tilde{t})<1$. This immediately implies 
$\tilde{u}=\tilde{v}$ on $[0,\tilde{t}]$, and hence 
$u(t+t_0)=v(t+t_0)$ for all $t\in[0,\tilde{t}]$, 
which contradicts the definition of $t_0$.\\
To establish \eqref{eq31}, we estimate separately the $L^p$ and $L^\infty$ norms 
of $\tilde{u}-\tilde{v}$. By \eqref{pqbound} and \eqref{eq3}, we have
\[
\|\tilde{u}(t)-\tilde{v}(t)\|_{p}\le \int_0^t
\|f(\tilde{u}(\tau))-f(\tilde{v}(\tau))\|_{p}\, d\tau\le C \int_0^t 
\|(\tilde{u}(\tau)-\tilde{v}(\tau))
(e^{\lambda |\tilde{u}(\tau)|^p}
+e^{\lambda |\tilde{v}(\tau)|^p})\|_{p}
\, d\tau.
\]
Splitting the exponential terms and using H\"older's inequality, we obtain
\[
\begin{aligned}
\|\tilde{u}(t)-\tilde{v}(t)\|_{p}
&\le 2 \int_0^t 
\|\tilde{u}(\tau)-\tilde{v}(\tau)\|_{p}
\, d\tau  \\
&\quad + \int_0^t 
\|\tilde{u}(\tau)-\tilde{v}(\tau)\|_{q}
\left\|(e^{\lambda |\tilde{u}(\tau)|^p}-1)
+(e^{\lambda |\tilde{v}(\tau)|^p}-1)\right\|_{r}
\, d\tau,
\end{aligned}
\]
where $1/q+1/r=1/p$.
Since $q\ge p$, Lemma~\ref{lemma1}-(3) yields
\[
\begin{aligned}
\|\tilde{u}(t)-\tilde{v}(t)\|_{p}
&\le C t 
\sup_{0<\tau<t}
\|\tilde{u}(\tau)-\tilde{v}(\tau)\|_{\exp L^p}  \\
&\quad + C \sup_{0<\tau<t}
\|\tilde{u}(\tau)-\tilde{v}(\tau)\|_{\exp L^p}
\int_0^t
\left\|(e^{\lambda |\tilde{u}(\tau)|^p}-1)
+(e^{\lambda |\tilde{v}(\tau)|^p}-1)\right\|_{r}
\, d\tau.
\end{aligned}
\]
Moreover, by Proposition~\ref{prop3},
$$
\sup_{0<\tau<T-t_0}
\left\|(e^{\lambda |\tilde{u}(\tau)|^p}-1)
+(e^{\lambda |\tilde{v}(\tau)|^p}-1)\right\|_{r}
\le C(T,t_0,\tilde{u},\tilde{v})<\infty.
$$
Combining the above estimates, we obtain
\begin{equation}\label{eq32}
\sup_{0<\tau<t}
\|\tilde{u}(\tau)-\tilde{v}(\tau)\|_{p}
\le C(T,t_0,\tilde{u},\tilde{v})\, t\,
\sup_{0<\tau<t}
\|\tilde{u}(\tau)-\tilde{v}(\tau)\|_{\exp L^p}.
\end{equation}
Using the $L^r-L^\infty$ estimate (\ref{pqbound}), we similarly obtain, for $r>\max\{p,d/2\}$,
$$
\|\tilde{u}(t)-\tilde{v}(t)\|_\infty \leq C\int_0^t(t-\tau)_s^{-\frac{d}{2s r}}\|f(\tilde{u}(\tau))-f(\tilde{v}(\tau))\|_r\,d\tau.
$$
Proceeding as above and using Lemma \ref{lemma1}-(3)  and Proposition \ref{prop3}, we get
\begin{equation}\label{eq34}
\sup_{0<\tau<t}\|\tilde{u}(\tau)-\tilde{v}(\tau)\|_{ \infty}\leq C(T,t_0,\tilde{u},\tilde{v})t^{1-\frac{d}{2 r}} \sup_{0<\tau<t}\|\tilde{u}(\tau)-\tilde{v}(\tau)\|_{\exp L^p}.
\end{equation}
Finally, combining \eqref{eq32} and \eqref{eq34} and using Lemma~\ref{lemma1}-(1), we deduce
\[
\sup_{0<\tau<t}
\|\tilde{u}(\tau)-\tilde{v}(\tau)\|_{\exp L^p}
\le
C(T,t_0,\tilde{u},\tilde{v})
\left(t+t^{1-\frac{d}{2r}}\right)
\sup_{0<\tau<t}
\|\tilde{u}(\tau)-\tilde{v}(\tau)\|_{\exp L^p}.
\]
By choosing $t>0$ sufficiently small, we obtain \eqref{eq31}. This completes the proof.
\hfill$\square$

\section{Global Well-posedness: Proof of Theorem \ref{theo2}}\label{sec5}
In this section, we prove the global well-posedness and decay estimates for problem \eqref{eq1}  with initial data in the space $\exp L^p(\mathbb R^d)$.

{\bf Proof of Theorem \ref{theo2}.}  We prove the theorem by distinguishing two cases: the first case corresponds to $s < d(p-1)/(2p),$ while the second corresponds to $s \geq d(p-1)/(2p).$\\
\textbf{(i) Case $s < d(p-1)/(2p)$.} We prove the result by means of the Banach fixed-point theorem. To this end, we introduce the complete metric space
\[
\mathcal{U}_\varepsilon
=
\left\{
u \in L^\infty(0,\infty;\exp L^p(\mathbb{R}^d))
\;:\;
\|u\|_{L^\infty(0,\infty;\exp L^p(\mathbb{R}^d))}
\le 2\varepsilon
\right\},
\]
and define the integral operator $\Phi$ by
\[
\Phi(u)
:=
e^{-t\mathcal{L}}u_0
+
\int_0^t e^{-(t-\tau)\mathcal{L}} f(u(\tau))\, d\tau.
\]
Here $\varepsilon>0$ is a sufficiently small constant, 
which will be chosen later so that
$
\|u_0\|_{\exp L^p}
\le \varepsilon.
$
To establish the result, we show that $\Phi$ is a contraction 
mapping on $\mathcal{U}_\varepsilon$ for a suitable choice of 
$\varepsilon$. We begin by proving that $\Phi$ is well-defined on $\mathcal{U}_\varepsilon$, that is, $\Phi:\mathcal{U}_\varepsilon\rightarrow \mathcal{U}_\varepsilon$. For $u\in \mathcal{U}_\varepsilon$, using Proposition \ref{techprop1}-(1) and Lemma \ref{lemma4}, we have for every $r>\max\{d/2,1\}$,
$$
\left\|\Phi(u)\right\|_{\exp L^p}\leq\varepsilon+C\left\|f(u)\right\|_{L^\infty(0,\infty;L^1\cap L^r)}.
$$
We now estimate \( f(u) \) in the \( L^q(\mathbb{R}^d) \)-norm for cases \( q = 1 \) and \( q = r \).
From assumption \eqref{eq4a}, we have
\[
|f(u)| \le  C |u|^{m}\big(e^{\lambda |u|^p}-1\big) + C |u|^{m}.
\]
Applying H\"older's inequality, we obtain
$$
\|f(u)\|_{q}
\leq C \|u\|_{2mq}^{m}
\|e^{\lambda |u|^p}-1\|_{2q}
+ C \|u\|_{mq}^{m}.
$$
Using Lemma~\ref{lemma1}-(3) and the fact that $m \ge p$, we deduce
\[
\|f(u)\|_{q}
\le
C \|u\|_{\exp L^p}^{m}
\|e^{\lambda |u|^p}-1\|_{2q}
+ C \|u\|_{\exp L^p}^{m}.
\]
In addition, by Lemma~\ref{lemma6} and the fact that $u \in \mathcal{U}_\varepsilon$, we obtain
\begin{equation}\label{eq19}
\|f(u)\|_{q}
\le
C (2\varepsilon)^m
\left( 1 + (2\lambda q (2\varepsilon)^p)^{1/q} \right)
\le
C (2\varepsilon)^m
\left( 1 + (2\lambda q (2\varepsilon)^p)^{1/r} \right).
\end{equation}
Consequently, choosing $\varepsilon > 0$ sufficiently small, we arrive at
$\|\Phi(u)\|_{\exp L^p}
\le 2\varepsilon$, that is, $\Phi(u) \in \mathcal{U}_\varepsilon$.\\
We now establish the contraction property of $\Phi$ on $\mathcal{U}_\varepsilon$. Let $u,v \in \mathcal{U}_\varepsilon$. For every  $r>\max\{d/2,1\}$, Lemma~\ref{lemma4} yields
$$
\left\|\Phi(u)-\Phi(v)\right\|_{\exp L^p}\leq
C\left\|f(u)-f(v)\right\|_{L^\infty(0,\infty;
L^1\cap L^r)}.
$$
To estimate $f(u)-f(v)$ in the $L^q$-norm for $q=1$ and $q=r$, we proceed as follows. Using assumption~(\ref{eq4a}), we obtain
\begin{eqnarray*}
|f(u)-f(v)|
&\leq&
C|u-v|
\left(|u|^{m-1}(e^{\lambda |u|^p}-1)
+|v|^{m-1}(e^{\lambda |v|^p}-1)\right) \\
&\quad&
+\,C|u-v|\left(|u|^{m-1}+|v|^{m-1}\right).
\end{eqnarray*}
Applying H\"older's inequality with the conjugate exponent  $m'=m/(m-1)$, we deduce
\begin{eqnarray*}
C\left\|f(u)-f(v)\right\|_{q}
&\le \underbrace{C \left\|u-v\right\|_{mq}
\left\|
|u|^{m-1}(e^{\lambda |u|^p}-1)
+|v|^{m-1}(e^{\lambda |v|^p}-1)
\right\|_{q m'}}_{I}\\
&\underbrace{+C\left\|u-v\right\|_{mq}
\left\||u|^{m-1}+|v|^{m-1}\right\|_{q m'}}_{J}.
\end{eqnarray*}
Using H\"older's inequality once more, together with 
Lemma~\ref{lemma1}-(3) and the assumption $m\ge p$, we estimate $I$ as follows
$$
I \le
C \left\|u-v\right\|_{\exp L^p} 
\left(
\|u\|^{m-1}_{\exp L^p}
\left\|e^{\lambda |u|^p}-1\right\|_{2 q m'}
+
\|v\|^{m-1}_{\exp L^p}
\left\|e^{\lambda |v|^p}-1\right\|_{2 q m'}
\right).
$$
Invoking Lemma~\ref{lemma6} and the fact that 
$u,v\in \mathcal{U}_\varepsilon$, we infer that
\[
I
\le
C2^m\varepsilon^{m-1}
\left(
\frac{2\lambda mq}{m-1}(2\varepsilon)^p
\right)^{1/2qm'}
\left\|u-v\right\|_{\mathcal{U}_\varepsilon}
\le
\frac18
\left\|u-v\right\|_{\mathcal{U}_\varepsilon},
\]
for $\varepsilon>0$ sufficiently small.
Similarly, we estimate $J$ as
$$
J
\le C2^m\varepsilon^{m-1}
\left\|u-v\right\|_{\exp L^p}
\le
\frac18
\left\|u-v\right\|_{\mathcal{U}_\varepsilon},
$$
again for $\varepsilon>0$ sufficiently small. Consequently,
\[
C\left\|f(u)-f(v)\right\|_{L^1 \cap L^r}
\le
2(I+J),
\]
and therefore
\[
\left\|\Phi(u)-\Phi(v)\right\|_{\exp L^p}
\le
\frac12
\left\|u-v\right\|_{\mathcal{U}_\varepsilon}.
\]
This concludes the proof of the global existence result stated in 
Theorem~\ref{theo2} for the case  $s<d(p-1)/(2p)$. To derive the decay estimate \eqref{decay1.8}, we proceed along the same lines as in the contraction argument for the case $s \ge d(p-1)/(2p)$ below, with a slight modification of the functional framework. Instead of working in $\mathcal{U}_\varepsilon$, we consider the complete metric space
$$\left\{u\in L^\infty(0,\infty;\exp L^p(\mathbb{R}^d));\,\,\sup_{t>0}t_s^\sigma\|u(t)\|_{L^q(\mathbb{R}^d)}+\|u\|_{L^\infty(0,\infty;\exp L^p(\mathbb{R}^d))}\leq M\varepsilon\right\},$$
equipped with the metric $d(u,v):=\sup_{t>0}t_s^\sigma\|u(t)-v(t)\|_{q}$, where $M>0$ is a sufficiently large constant and $\varepsilon>0$ is chosen small enough so that $\|u_0\|_{\exp L^p}\leq\varepsilon$. The parameters $\sigma$ and $q$ are selected to satisfy 
$$ \sigma=\frac{1}{m-1}-\frac{d}{2s q}>0\quad\hbox{and}\quad\frac{d(m-1)}{s}<2q<\frac{d(m-1)}{s}\frac{1}{(2-m)_+}.$$
\noindent\textbf{(ii) Case $s \geq d(p-1)/(2p)$.} 
\label{subsec4.2} We now turn to the proof of the global existence result stated in Theorem~\ref{theo2} for the regime $s \ge d(p-1)/(2p)$. The argument follows the same approach as in \cite{FK20,Majdoub2}, based on a contraction mapping principle in a suitable complete metric space. We introduce the set
\begin{eqnarray*}
\mathcal{U}_\varepsilon =
\left\{
u \in L^\infty(0,\infty;\exp L^p(\mathbb{R}^d))
\;:\;
\sup_{t>0} t_s^\sigma \|u(t)\|_{L^q(\mathbb{R}^d)}
+
\|u\|_{L^\infty(0,\infty;\exp L^p(\mathbb{R}^d))}
\le M\varepsilon
\right\},
\end{eqnarray*}
where $t_s:=\max\{t^s,t\}$, and $\sigma=1/(m-1) - d/(2s q)>0$. Here, $M>0$ is a sufficiently large constant and $0<\varepsilon\ll1$ is chosen small enough so that
$
\|u_0\|_{\exp L^p}
\le \varepsilon.
$
By \cite[Proposition~2.2]{Majdoub2}, 
it follows that $(\mathcal{U}_\varepsilon, d)$ is a complete metric space endowed with the distance $
d(u,v)
=
\sup_{t>0}
t^\sigma
\|u(t)-v(t)\|_{q}.
$
For any $u \in \mathcal{U}_\varepsilon$, we define the  operator $\Phi$ by
\[
\Phi(u)
=
e^{-t\mathcal{L}}u_0
+
\int_0^t e^{-(t-\tau)\mathcal{L}} f(u(\tau))\, d\tau.
\]
The aim  is to show that $\Phi$ is a contraction map on $\mathcal{U}_\varepsilon$. We begin by showing that $\Phi$ maps $\mathcal{U}_\varepsilon$ into itself. Let $u\in \mathcal{U}_\varepsilon$. By Proposition \ref{techprop1}, we have
$$\|e^{-t\mathcal{L}}u_0\|_{\exp L^p}\leq \|u_0\|_{\exp L^p}\leq\varepsilon.$$
Next, we estimate the $L^q$-norm. Choosing
 $$\sigma=\frac{d}{2 s}\left(\frac{2 s}{d(m-1)}-\frac{1}{q}\right)=\frac{1}{m-1}-\frac{d}{2 s q}>0,$$
for $q>d(m-1)/(2s)\geq p$, and applying \eqref{pqbound} together with Lemma \ref{lemma1}-(3), we obtain
$$
t_s^\sigma\|e^{-t\mathcal{L}}u_0\|_{L^q}\leq C\,\|u_0\|_{L^{\frac{d(m-1)}{2 s}}}\leq C\,\|u_0\|_{\exp L^p}\leq C\varepsilon.
$$
To estimate the second term of $\Phi(u)$ in the space $\exp L^p(\mathbb{R}^d)$, we first consider the case $s=d(p-1)/(2 p)$. Recall from (\ref{eq0}) that
$\|u\|_{L^p}+\|u\|_{L^\phi}  \approx \|u\|_{\exp L^p},$
where $\phi(u)=e^{|u|^p}-1-|u|^p$. Hence, it suffices to establish the following two estimates:
\begin{equation}\label{eq113}
\left\|\int_0^te^{-(t-\tau)\mathcal{L}} f(u(\tau))\,d\tau\right\|_{L^\infty(0,\infty;L^p(\mathbb{R}^d))}=O(\varepsilon),
\end{equation}
and
\begin{equation}\label{eq14}
\left\|\int_0^te^{-(t-\tau)\mathcal{L}}f(u(\tau))\,d\tau\right\|_{L^\infty(0,\infty;L^\phi(\mathbb{R}^d))}=O(\varepsilon).
\end{equation}
We now proceed with the proof of (\ref{eq113}). Since
$$|f(u)|\leq C|u|^me^{\lambda|u|^p}=C|u|\sum_{k=0}^{\infty}\frac{\lambda^k}{k!}|u|^{kp+m-1},$$
using (\ref{pqbound}) and H\"older's inequality, with $1\leq r\leq p$, and $\frac{1}{r}=\frac{1}{p}+\frac{1}{a}$,  it follows that
\begin{eqnarray*}
\left\|\int_0^te^{-(t-\tau)\mathcal{L}} f(u(\tau))\,d\tau\right\|_{p}&\leq& C\int_0^t(t-\tau)_s^{-\frac{d}{2 sa}}\|f(u(\tau))\|_{r}\,d\tau\\
&\leq& C\sum_{k=0}^{\infty}\frac{\lambda^k}{k!}\int_0^t(t-\tau)_s^{-\frac{d}{2 sa}}\|u(\tau)\|_{p}\|u(\tau)\|^{kp+m-1}_{{(kp+m-1)a}}\,d\tau,
\end{eqnarray*}
Next, applying H\"older's interpolation inequality together with Lemma \ref{lemma1}-(3), we obtain
\begin{eqnarray*}
&{}&\left\|\int_0^te^{-(t-\tau)\mathcal{L}} f(u(\tau))\,d\tau\right\|_{p}\\
&{}&\leq C\sum_{k=0}^{\infty}\frac{\lambda^k}{k!}\int_0^t(t-\tau)_s^{-\frac{d}{2 sa}}\|u(\tau)\|_{p}\|u(\tau)\|^{(kp+m-1)\theta}_{q}\|u(\tau)\|^{(kp+m-1)(1-\theta)}_{\rho}\,d\tau\\
&{}& \leq C\sum_{k=0}^{\infty}\frac{\lambda^k}{k!}\left(\Gamma\left(\frac{\rho}{p}+1\right)\right)^{\frac{(kp+m-1)(1-\theta)}{\rho}}\\
&{}&\quad\times \int_0^t(t-\tau)_s^{-\frac{d}{2 sa}}\tau_s^{-\sigma(kp+m-1)\theta}\left(\tau_s^{\sigma}\|u(\tau)\|\right)^{(kp+m-1)\theta}_{q} \|u(\tau)\|^{(kp+m-1)(1-\theta)+1}_{\exp L^p}\,d\tau ,\qquad
\end{eqnarray*}
where
\begin{equation}\label{propA}
\frac{1}{a(kp+m-1)}=\frac{\theta}{q}+\frac{1-\theta}{\rho},\quad 0\leq \theta\leq 1,\quad\hbox{and}\quad p\leq\rho<\infty.
\end{equation}
Therefore, since $u\in \mathcal{U}_\varepsilon$, we obtain the following estimate
\begin{eqnarray}\label{eq15}
&{}&\left\|\int_0^te^{-(t-\tau)\mathcal{L}} f(u(\tau))\,d\tau\right\|_{p}\nonumber\\
&{}& \leq C\sum_{k=0}^{\infty}\frac{\lambda^k}{k!}\left(\Gamma\left(\frac{\rho}{p}+1\right)\right)^{\frac{(kp+m-1)(1-\theta)}{\rho}} (M\varepsilon)^{kp+m}\int_0^t(t-\tau)^{-\frac{d}{2 sa}}\tau^{-\sigma(kp+m-1)\theta}\,d\tau\nonumber\\
&{}&=C\sum_{k=0}^{\infty}\frac{\lambda^k}{k!}\left(\Gamma\left(\frac{\rho}{p}+1\right)\right)^{\frac{(kp+m-1)(1-\theta)}{\rho}} (M\varepsilon)^{kp+m}\mathcal{B}\left(1-\frac{d}{2 sa},1-\sigma(kp+m-1)\theta\right),
\end{eqnarray}
where we have used the fact that $t_s\geq t$ for all $t>0$, and where $\mathcal{B}$ denotes the beta function, under the following assumptions:
$$\frac{d}{2 sa}<1,\qquad \sigma(kp+m-1)\theta<1,\quad\hbox{and}\quad 1-\frac{d}{2 sa}-\sigma(kp+m-1)\theta=0.$$
It remains to establish the existence of the parameters $\theta=\theta_k$, $\rho=\rho_k$, $k\geq0$, and $a$. Since $q>(m-1)p/(p-1)$, we can select the  parameters as follows. First we choose
$$\frac{1-\frac{d(p-1)}{2 sp}}{\sigma(pk+m-1)}<\theta_k<\frac{1}{pk+m-1}\min(m-1,\frac{1}{\sigma})=\frac{m-1}{pk+m-1},$$
since $\sigma=1/(m-1)-d/(2s q)<1/(m-1)$. The lower bound is imposed to ensure compatibility with the condition $r>1$. Next, we construct $\rho_k$.  We require the relation $1-d/(2 sa)-\sigma(kp+m-1)\theta_k=0$. Using the identity in \eqref{propA}, we choose $\rho_k$ so that
$$\frac{1-\theta_k}{\rho_k}=\frac{2 s}{d(kp+m-1)}-\frac{2 s\theta_k}{d(m-1)}.$$
This in particular yields $\rho_k\geq d(m-1)/(2 s)\geq p$. Finally, we fix $a>0$ so that the remaining balance condition in \eqref{propA} is satisfied, thereby completing the selection of parameters. With these choices, using the identity $\mathcal{B}(x,y)=\Gamma(x)\Gamma(y)/\Gamma(x+y)$, for all $x,y>0$, we obtain
\begin{equation}\label{eq16}
\mathcal{B}\left(1-\frac{d}{2sa},1-\sigma(kp+m-1)\theta\right)=\Gamma\left(1-\frac{d}{2 sa}\right)\Gamma\left(\frac{d}{2 sa}\right)\leq C,
\end{equation}
We also observe that $\theta_k\longrightarrow 0$ and $\rho_k\longrightarrow \infty$ as $k\rightarrow\infty,$ which implies
$$\frac{(kp+m-1)(1-\theta_k)}{p\rho_k}(1+\rho_k)\leq k, \quad \hbox{for all}\,\, k\geq1.$$
This, together with the estimate $\Gamma(x+1)\leq C\,x^{x+1/2}$ for all $x\geq 1$, yields
\begin{equation}\label{eq17}
\left(\Gamma\left(\frac{\rho_k}{p}+1\right)\right)^{\frac{(kp+m-1)(1-\theta_k)}{\rho_k}} \leq C^k\,k!.
\end{equation}
Combining (\ref{eq15}), (\ref{eq16}) and (\ref{eq17}), we obtain
$$
\left\|\int_0^te^{-(t-\tau)\mathcal{L}} f(u(\tau))\,d\tau\right\|_{p} \leq C\sum_{k=0}^{\infty}(C\,\lambda)^k (M\varepsilon)^{kp+m}\leq C(M\varepsilon)^m,
$$
for $\varepsilon$ sufficiently small. This concludes the proof of \eqref{eq113}. We now turn to the proof of \eqref{eq14}. Using the relation $s=d(p-1)/(2 p)$ along with Lemma \ref{lemma5}, and $p'=p/(p-1)$, we deduce
 $$\left\|\int_0^te^{-(t-\tau)\mathcal{L}} f(u(\tau))\,d\tau\right\|_{L^\infty(0,\infty;L^\phi)}\leq C\|f(u(\tau))\|_{L^\infty(0,\infty;L^1\cap L^{2p}\cap L^{2p'})}.$$ 
Arguing as in the case of $s<n(p-1)/(2 p)$ (see (\ref{eq19})), we deduce $\left\|f(u(t))\right\|_{r}\leq C(M\varepsilon)^m$, for $r=1,2p, 2p'$, and all $t>0$. This completes the proof of \eqref{eq14}.\\
To estimate the second term of $\Phi(u)$ in $\exp L^p(\mathbb{R}^d)$ in the case $s>n(p-1)/(2 p)$, let $b>0$ be the positive number satisfying $b=2\ln(b+1)$. In fact, $2<b<3$ and
$$2\ln(z+1)-z\geq0,\qquad \hbox{for all}\,\, 0\leq z\leq b.$$
 Consequently, one verifies that
 \begin{equation}\label{eq18}
\left(\ln\left((t-\tau)_s^{-d/2s}+1\right)\right)^{-1/p}\leq 2^{1/p}(t-\tau)_s^{d/2s p},\quad\hbox{for all}\,\,0\leq \tau\leq t-b^{-2s/d}.
\end{equation}
If $t\leq b^{-2s/d}$, then by Proposition \ref{techprop1}, for any $r\geq 1$, we obtain
$$
\left\|\int_0^te^{-(t-\tau)\mathcal{L}} f(u(\tau))\,d\tau\right\|_{\exp L^p} \leq  C\int_0^t\left((t-\tau)_s^{-\frac{d}{2 s r}}+1\right)\left(\|f(u(\tau))\|_r+\|f(u(\tau))\|_1\right)\,d\tau.
$$
Taking $r=p'=p/(p-1)$ and using the assumption $s>d(p-1)/(2p)$, together with the inequality $(t-\tau)_s\geq (t-\tau)$, it follows that
\begin{equation}\label{eq20}
\left\|\int_0^t e^{-(t-\tau)\mathcal{L}} f(u(\tau))\,d\tau\right\|_{\exp L^p} \leq C\|f(u)\|_{L^\infty(0,\infty;L^1\cap L^{p'})}.
\end{equation}
Next, since $m>p$, and as  in \eqref{eq19}, one obtains $\|f(u(t))\|_{r}
\leq  C (M\varepsilon)^m$, for  $r=1,p'$, and all $t>0$. Therefore,
\[
\left\|\int_0^t e^{-(t-\tau)\mathcal{L}} f(u(\tau))\,d\tau\right\|_{\exp L^p}
=
O(\varepsilon).
\]
For $t> b^{-2s/d}$, we split the integral into two parts
\begin{eqnarray*}
\left\|\int_0^t e^{-(t-\tau)\mathcal{L}} f(u(\tau))\,d\tau\right\|_{\exp L^p}
&\le&
\int_0^{t-b^{-2s/d}}
\left\|e^{-(t-\tau)\mathcal{L}} f(u(\tau))\right\|_{\exp L^p}
\,d\tau
\\
&&
+\int_{t-b^{-2s/d}}^t
\left\|e^{-(t-\tau)\mathcal{L}} f(u(\tau))\right\|_{\exp L^p}
\,d\tau
\\
&=:& I + J.
\end{eqnarray*}
In the same spirit as in  \eqref{eq20}, where the integration is performed over $[0,b^{-s/2d}]$, we argue similarly on the interval $[t-b^{-s/2d},t]$ to obtain $J \leq C(M\varepsilon)^m.$\\
 On the other hand, using Proposition \ref{techprop1}-(2) together with \eqref{eq18}, we estimate $I$ as follows:
$$
I \leq
C \int_0^{t}
(t-\tau)_s^{-\frac{d}{2s}\left(\frac{1}{a}-\frac{1}{p}\right)}
\|f(u(\tau))\|_{a}\,d\tau,
$$
where $1\leq a\leq p$,  Applying the same argument used to derive \eqref{eq113} (under the same assumptions on $s$ and $m$), we deduce that
\[
I = O(\varepsilon).
\]
Combining the estimates for $I$ and $J$, we conclude that, for $t>b^{-2s/d}$,
\[
\left\|\int_0^t e^{-(t-\tau)\mathcal{L}} f(u(\tau))\,d\tau
\right\|_{\exp L^p}
= O(\varepsilon).
\]
Since the above bound is uniform in time, it follows that
\[
\left\|\int_0^t e^{-(t-\tau)\mathcal{L}} f(u(\tau))\,d\tau
\right\|_{L^\infty(0,\infty;\exp L^p)}
= O(\varepsilon),
\qquad \text{for all } t>0.
\]
It remains to show that
\[
t_s^\sigma
\left\|
\int_0^t e^{-(t-\tau)\mathcal{L}} f(u(\tau))\,d\tau
\right\|_{q}
= O(\varepsilon),
\qquad \text{for every } t>0,
\]
which guarantees that $\Phi(u)\in \mathcal{U}_\varepsilon$. This follows by an argument analogous to \eqref{eq21}, together with the fact that $f(0)=0$.\\
We now proceed to prove that $\Phi$ is a contraction on $\mathcal{U}_{\varepsilon}$. Let $u,v\in \mathcal{U}_\varepsilon$. By \eqref{pqbound}, we obtain
\begin{eqnarray*}
\|\Phi(u)-\Phi(v)\|_{q}
&\leq&
C\int_0^t
(t-\tau)_s^{-\frac{d}{2 s}\left(\frac{1}{r}-\frac{1}{q}\right)}
\|f(u(\tau))-f(v(\tau))\|_{r}
\,d\tau,
\end{eqnarray*}
for every $1\leq r\leq q$. Moreover, by invoking assumption \eqref{eq4a} together with the power-series expansion of the exponential function, we obtain the pointwise estimate
$$
|f(u)-f(v)|
\le C \sum_{k=0}^{\infty}
\frac{\lambda^k}{k!}
|u-v|
\left(
|u|^{kp+m-1}
+
|v|^{kp+m-1}
\right).
$$
Applying H\"older's inequality then yields
$$
\|f(u)-f(v)\|_{r}
\leq C\sum_{k=0}^{\infty}\frac{\lambda^k}{k!}
\|u-v\|_{q}
\left(
\|u\|_{{a(kp+m-1)}}^{kp+m-1}
+
\|v\|_{{a(kp+m-1)}}^{kp+m-1}
\right).
$$
Next, we use H\"older interpolation to further estimate the norms, which gives
\begin{eqnarray*}
\|f(u)-f(v)\|_{r}
&\leq&
C\sum_{k=0}^{\infty}\frac{\lambda^k}{k!}
\|u-v\|_{q}
\Big(
\|u\|_{q}^{(kp+m-1)\theta}
\|u\|_{\rho}^{(kp+m-1)(1-\theta)}
\\
&&\qquad\qquad\qquad\qquad\quad
+
\|v\|_{q}^{(kp+m-1)\theta}
\|v\|_{\rho}^{(kp+m-1)(1-\theta)}
\Big),
\end{eqnarray*}
where the parameters satisfy
\[
\frac{1}{a(kp+m-1)}
=
\frac{\theta}{q}
+
\frac{1-\theta}{\rho},\qquad \frac{1}{r}
=
\frac{1}{q}
+
\frac{1}{a},
\]
with $0\leq \theta\leq 1$. Consequently, using Lemma \ref{lemma1}-(3) and assuming that $p\leq \rho<\infty$, we infer that
\begin{eqnarray*}
\|\Phi(u)-\Phi(v)\|_{q}
&\leq&
C\sum_{k=0}^{\infty}\frac{\lambda^k}{k!}
\left(\Gamma\!\left(\frac{\rho}{p}+1\right)\right)^{\frac{(kp+m-1)(1-\theta)}{\rho}}
\\
&&\times
\int_0^t
(t-\tau)_s^{-\frac{d}{2 sa}}
\tau_s^{-\sigma(1+(kp+m-1)\theta)}
\\
&&\times
\Big(
(\tau_s^\sigma\|u\|_{q})^{(kp+m-1)\theta}
\|u\|_{\exp L^p}^{(kp+m-1)(1-\theta)}
\\
&&\qquad\qquad
+
(\tau_s^\sigma\|v\|_{q})^{(kp+m-1)\theta}
\|v\|_{\exp L^p}^{(kp+m-1)(1-\theta)}
\Big)
\,d\tau.
\end{eqnarray*}
Since $u,v\in \mathcal{U}_\varepsilon$, this further yields
\begin{eqnarray}\label{estA}
\|\Phi(u)-\Phi(v)\|_{q}&\leq&
C\, d(u,v)\, (\varepsilon M)^{m-1}
\sum_{k=0}^{\infty}
(\varepsilon M)^{kp}
\frac{\lambda^k}{k!}\left(\Gamma\!\left(\frac{\rho}{p}+1\right)\right)^{\frac{(kp+m-1)(1-\theta)}{\rho}}\nonumber
\\
&&\times \int_0^t
(t-\tau)_s^{-\frac{d}{2 sa}}
\tau_s^{-\sigma(1+(kp+m-1)\theta)}
\,d\tau.
\end{eqnarray}
To estimate the last integral term on the right-hand side of \eqref{estA}, we use a Beta-function type argument. In particular, we have
$$
\int_0^t
(t-\tau)_s^{-\alpha}
\tau_s^{-\beta}
\,d\tau
\leq
t_s^{-\sigma}
\mathcal{B}(1-\alpha,1-\beta),
$$
where $\alpha=d/(2sa)$ and $\beta=\sigma(1+(kp+m-1)\theta)$, provided that
$1-\alpha-\beta=-\sigma$ and $\alpha,\beta<1$.
Indeed, if $t\geq1$, then
$$\int_0^t
(t-\tau)_s^{-\alpha}
\tau_s^{-\beta}
\,d\tau
\leq\int_0^t
(t-\tau)^{-\alpha}
\tau^{-\beta}
\,d\tau=
t^{-\sigma}
\mathcal{B}(1-\alpha,1-\beta)=t_s^{-\sigma}
\mathcal{B}(1-\alpha,1-\beta).
$$
If $t\leq 1$, we similarly obtain
$$\int_0^t
(t-\tau)_s^{-\alpha}
\tau_s^{-\beta}
\,d\tau
=\int_0^t
(t-\tau)^{-s\alpha}
\tau^{-s\beta}
\,d\tau=
t^{1-s\alpha-s\beta}
\mathcal{B}(1-s\alpha,1-s\beta)\leq t_s^{-\sigma}
\mathcal{B}(1-\alpha,1-\beta).
$$
Therefore, we conclude that
\begin{eqnarray*}
t_s^\sigma
\|\Phi(u)-\Phi(v)\|_{q}
&\leq&
C\, d(u,v)\, (\varepsilon M)^{m-1}
\sum_{k=0}^{\infty}
(\varepsilon M)^{kp}
\frac{\lambda^k}{k!}
\\
&&\times
\left(\Gamma\!\left(\frac{\rho}{p}+1\right)\right)^{\frac{(kp+m-1)(1-\theta)}{\rho}}\mathcal{B}(1-\alpha,1-\beta),
\end{eqnarray*}
As in the previous argument, for each $k\geq 0$ we first fix a parameter $\theta=\theta_k\geq 0$ such that
\[
\frac{1-\frac{d(q-1)}{2sq}}{\sigma(pk+m-1)}
<
\theta_k
<
\frac{1}{pk+m-1}
\min\left\{m-1,\frac{1-\sigma}{\sigma}\right\},
\]
where we have also used that $q>(m-1)p/(p-1)\geq m.$
Next, we choose $\rho=\rho_k$ satisfying
\[
\frac{1-\theta_k}{\rho_k}
=
\frac{2s}{d(kp+m-1)}
-
\frac{2s\theta_k}{d(m-1)},
\]
and we finally select $a>0$ such that
\[
\frac{1}{a(kp+m-1)}
=
\frac{\theta_k}{q}
+
\frac{1-\theta_k}{\rho_k}.
\]
In order to guarantee that $\sigma<1$, we further assume $
q<d(m-1)/(2s(2-m)_+)$, where $(\cdot)_+$ denotes the positive part.
With these choices of parameters, and arguing as in the previous case, the Beta-function term remains uniformly bounded, namely $\mathcal{B}\!\left(
1-\alpha,
\,1-\beta
\right)=
\Gamma\!\left(1-\alpha\right)
\Gamma\!\left(1-\beta\right)/
\Gamma\!\left(2-\alpha-\beta\right)
\leq C$, and moreover,
\[
\left(
\Gamma\!\left(\frac{\rho_k}{p}+1\right)
\right)^{\frac{(kp+m-1)(1-\theta_k)}{\rho_k}}
\leq
C^k\,k!.
\]
As a result, we arrive at
\begin{equation}\label{eq21}
t_s^\sigma
\|\Phi(u)-\Phi(v)\|_{q}
\leq
C\, d(u,v)\,(M\varepsilon)^{m-1}
\sum_{k=0}^{\infty}
(C\lambda)^k (M\varepsilon)^{kp}.
\end{equation}
By choosing $\varepsilon>0$ sufficiently small, it follows that
\[
t_s^\sigma
\|\Phi(u)-\Phi(v)\|_{q}
\leq
\frac{1}{2}\, d(u,v),
\]
which shows that $\Phi$ is a contraction on $\mathcal{U}_\varepsilon$.
This completes the proof of the existence of a global solution in Theorem \ref{theo2} in the case
$s\geq d(p-1)/(2p)$. The estimate \eqref{decay1.8} follows directly from the fact that
$u\in \mathcal{U}_\varepsilon$.

We now turn to the proof of \eqref{eq5}, namely the continuity of $u$ at $t=0$ in $\exp L^p(\mathbb{R}^d)$.
From the mild formulation, we have
$$
\|u(t)-e^{-t\mathcal{L}}u_0\|_{\exp L^p}
\le
\int_0^t
\|e^{-(t-\tau)\mathcal{L}}f(u(\tau))\|_{\exp L^p}
\,d\tau .
$$
Applying Lemma \ref{lemma1}-(1), we obtain
$$
\|u(t)-e^{-t\mathcal{L}}u_0\|_{\exp L^p}\lesssim
\int_0^t
\|e^{-(t-\tau)\mathcal{L}}f(u(\tau))\|_{p}\,d\tau
+\int_0^t
\|e^{-(t-\tau)\mathcal{L}}f(u(\tau))\|_{\infty}\,d\tau.
$$
Let $q>\max\{d/(2s),1\}$. 
Using \eqref{pqbound}, this yields
\begin{align}
\|u(t)-e^{-t\mathcal{L}}u_0\|_{\exp L^p}
&\lesssim\int_0^t
\|f(u(\tau))\|_{p}\,d\tau+\int_0^t
(t-\tau)_s^{-\frac{d}{2sq}}
\|f(u(\tau))\|_{q}\,d\tau .
\label{p3}
\end{align}
For $r=p$ or $r=q$, using
$
|f(u)|
\le C|u|^m e^{\lambda |u|^p}$ together with H\"older's inequality, we infer
\[
\|f(u)\|_{r}
\le
C \|u\|_{2mr}^m
\|e^{\lambda |u|^p}-1\|_{2r}
+ C \|u\|_{mr}^m.
\]
Since $2mr\ge mr\ge m\ge p$, Lemma \ref{lemma1}-(3) ensures that
\[
\|u\|_{2mr}+\|u\|_{mr}
\le C \|u\|_{\exp L^p}.
\]
Combining this with Lemma \ref{lemma6} and the fact that $u\in \mathcal{U}_\varepsilon$, we deduce
\begin{equation}
\|f(u)\|_{r}
\le C\|u\|_{\exp L^p}^m.
\label{p4}
\end{equation}
Substituting \eqref{p4} into \eqref{p3} and using that $u\in L^\infty(0,\infty;\exp L^p(\mathbb{R}^d))$, we obtain
\[
\|u(t)-e^{-t\mathcal{L}}u_0\|_{\exp L^p}
\le
C t \|u\|_{L^\infty(0,\infty;\exp L^p)}^m
+
C t^{1-\frac{d}{2sq}}
\|u\|_{L^\infty(0,\infty;\exp L^p)}^m,
\]
which completes the proof of continuity at $t=0$.

To conclude the proof of Theorem \ref{theo2}, it remains to verify that $u(t) \rightarrow u_0$ 
in $\exp L^p(\mathbb{R}^d)$ as $ t \to 0,$ in the weak$^{*}$ sense. Let $X = L^1(\ln L)^{1/p}(\mathbb{R}^d)$ be the pre-dual space of 
$\exp L^p(\mathbb{R}^d)$. 
It is well known that $X$ is a Banach space and that $C_0^\infty(\mathbb{R}^d)$ is dense in $X$ 
(see \cite{Adams}). For any $\varphi \in X$, using the duality between $\exp L^p(\mathbb{R}^d)$ and $X$, together with \eqref{heatsemimix} and H\"older's inequality in Orlicz spaces, we obtain
\begin{align*}
\left|
\int_{\mathbb{R}^d}
\bigl(e^{-t\mathcal{L}}u_0(x)-u_0(x)\bigr)\varphi(x)\,dx
\right|
&=
\left|
\int_{\mathbb{R}^d}
u_0(x)\bigl(e^{-t\mathcal{L}}\varphi(x)-\varphi(x)\bigr)\,dx
\right| \\
&\le
2\|u_0\|_{\exp L^p}
\|e^{-t\mathcal{L}}\varphi-\varphi\|_X.
\end{align*}
Since $C_0^\infty(\mathbb{R}^d)$ is dense in $X$ and the semigroup
$e^{-t\mathcal{L}}$ is strongly continuous on $X$ 
(see the argument in the proof of Proposition \ref{prop2}), it follows that
\[
\lim_{t\to 0}
\|e^{-t\mathcal{L}}\varphi-\varphi\|_X=0.
\]
Consequently,
\[
\lim_{t\to 0}
\int_{\mathbb{R}^d}
\bigl(e^{-t\mathcal{L}}u_0-u_0\bigr)\varphi\,dx
=0
\qquad \text{for all } \varphi\in X,
\]
which establishes the weak$^{*}$ convergence. This completes the proof of Theorem  \ref{theo2}.
\hfill $\square$

 
\section*{Acknowledgment}

Dharmendra Kumar Chaurasia acknowledges the financial support provided by the University Grants Commission (UGC), India, under Grant No. F.No. 211610055687, and sincerely thanks his supervisor, Dr. Ashish Pathak, for the support and encouragement.

\end{document}